\documentclass[11pt]{amsart}

\usepackage{setspace} 
\usepackage{amssymb}
\usepackage[dvips]{color}
\usepackage{amsmath}
\usepackage{amsthm}
\usepackage[a4paper]{geometry}
\usepackage{amsfonts}
\usepackage[all]{xypic} 
\usepackage{bbm}
\usepackage{xcolor}
\usepackage[normalem]{ulem}

\usepackage{color}
\usepackage[colorlinks=true,linkcolor=blue,citecolor=blue,pdfpagelabels=false]{hyperref}
\usepackage{cleveref}
\usepackage{url}
\usepackage{enumitem}
\usepackage{hyperref}

\usepackage[colorinlistoftodos]{todonotes}

\allowdisplaybreaks[4] 
\newtheoremstyle{myremark}     {10pt}{10pt}{}{}{\bfseries}{.}{.5em}{}

\newtheorem{thm}{Theorem}[section]

\newtheorem{lem}[thm]{Lemma}
\newtheorem{prop}[thm]{Proposition}
\theoremstyle{definition}
\newtheorem{defn}[thm]{Definition}

\newtheorem{rem}[thm]{Remark}
\numberwithin{equation}{section}

\def\C{\mathbb C}

\def\N{\mathbb N}

\def\T{\mathbb T}
\def\Z{\mathbb Z}

\newcommand{\CB}{\mathcal{B}}

\newcommand{\CH}{\mathcal{H}}

\newcommand{\CM}{\mathcal{M}}

\newcommand{\CP}{\mathcal{P}}

\newcommand{\CS}{\mathcal{S}}

\newcommand{\CU}{\mathcal{U}}

\newcommand{\eps}{\varepsilon}

\newcommand{\abs}[1]{\left\vert#1\right\vert}

\newcommand{\norm}[1]{\left\Vert#1\right\Vert}

\begin{document}

	\title[Ergodic Theory along Reiter sequence]{ Pointwise Ergodic Theorems Along Reiter Sequences on Locally Compact Groups}
	
	\author[P. Bikram]{Panchugopal Bikram$^{1, 2}$}
\address{$^{1}$School of Mathematical Sciences,
National Institute of Science Education and Research,  Bhubaneswar, Odisha, India  \newline  $^{2}$Homi Bhabha National Institute, Training School Complex, Anushakti Nagar, Mumbai 400094, India}
	
\author[S.Kundu]{Sudipta Kundu$^{1, 2}$}
	
\author[Hariharan G.]{Hariharan G$^{1, 2}$}
	
	\email{bikram@niser.ac.in}  \email{sudipta.kundu@niser.ac.in}
    	\email{hariharan.g@niser.ac.in} 

\author[D. Saha]{Diptesh Saha }
	\address{Institute of Mathematics of the Polish Academy of Sciences,  ul. Sniadeckich 8, 
		00–656 Warszawa, Poland}
	\email{dptshs@gmail.com}
        
\keywords{ Reiter sequence, amenable group,  von Neumann algebras, Wiener-Wintner Ergodic Theorem}
\subjclass[2020]{Primary 46L53, 46L55; Secondary 37A55, 46L40.}

	\begin{abstract}
Let $G$ be a locally compact, second countable group admitting a Reiter sequence, and suppose that $G$ acts on a von Neumann algebra $\CM$ equipped with a semifinite trace $\tau$ via trace-preserving automorphisms. In this paper, we prove a maximal inequality for ergodic averages along such Reiter sequences. Our approach relies on approximating the Reiter sequence by a linear combination of characteristic functions of F\o lner sequences using the layer-cake representation. Furthermore, we establish the bilateral almost uniform convergence of these ergodic averages. As an application, we deduce a non-commutative Wiener-Wintner type theorem along Reiter sequences.

    	\end{abstract}

    \maketitle

\section{Introduction}
\subsection{Background and methodology:} 
Ergodic theory and operator algebras have been closely intertwined since the seminal work of von Neumann. A central theme of ergodic theory is the study of the long-term behaviour of dynamical systems through the convergence of suitable ergodic averages. The subject originated in the 1930s with the fundamental results of von Neumann and Birkhoff. von Neumann established the mean ergodic theorem, proving convergence of ergodic averages in the Hilbert space norm, while Birkhoff obtained the corresponding pointwise convergence theorem. The mean ergodic theorem was subsequently extended by F. Riesz \cite{Riesz1938} to contractions on $L^p$-spaces. Since then, ergodic theorems have developed in many directions and remain an active area of research. In this article, we study ergodic averages associated with actions of amenable groups and prove pointwise ergodic theorems along Reiter sequences in both the classical and non-commutative settings.

\medskip
The study of classical ergodic theorems has crossed the boundaries of discrete, abelian group actions, and recent works involve the rich geometric theory of general amenable groups. In the classical setting, Calder\' on \cite{calderon1953general} first proved the pointwise ergodic theorem along a collection of subsets $(N_s)_s$ of $G$ satisfying doubling and asymptotic invariance conditions. Later Breuillard and Tessera \cite{breuillard2014geometry}, \cite{tessera2007volume} proved that the balls associated to any invariant metric quasi-isometric to word metric on the groups of polynomial growth satisfy the above two conditions; hence, they obtained the ergodic theorems for these groups. However, Lindenstrauss \cite{Lindenstrauss2001}, settled the question for general amenable groups for F\o lner sequence of sets satisfying the tempered condition, which is a generalization of doubling condition for these sets. A few more such classical ergodic theorems have already been obtained by various authors. We refer to \cite{nevo2006pointwise, anantharaman2010ergodic} for further details.

On the other hand, von Neumann algebras have emerged as non-commutative probability spaces due to its rich structure of projections and other heavy machinery analogous to the classical measure spaces. The study of ergodic theorems in the non-commutative spaces was pioneered by Lance \cite{lance1976ergodic}, in which he studied an analogue of the pointwise convergence of ergodic averages associated to any $*$-automorphism defined on any von Neumann algebra in the presence of an invariant faithful, normal state (not necessarily tracial). Immediately after, Lance's results were further extended for von Neumann algebras with more general maps than the $*$-automorphisms, such as various groups of $*$-automorphisms and in general for positive maps. We refer to \cite{conze1978ergodic}, \cite{kummerer1978non} and the references therein. It is known for a long time that the maximal inequality is essential to obtain the pointwise ergodic theorem \cite{wiener1939ergodic}. The first instance of maximal inequality was obtained by Yeadon \cite{Yeadon1977} for ergodic averages associated to the positive, sub-tracial maps on the non-commutative $L^1$-spaces. However, a more systematic study of maximal ergodic theorems is observed much later in the seminal work of Junge and Xu \cite{junge2007noncommutative}. Based on the ground breaking work of Pisier and Xu \cite{pisier2003non}, the authors formulated the non-commutative analogue of the norm of the maximal functions and showed that the aforementioned ergodic averages satisfy the maximal inequality and obtained associated pointwise ergodic theorems. After that, in the last few decades, the developments in the non-commutative Harmonic analysis have seen various applications of such non-commutative ergodic theorems. We refer to \cite{hong2024pointwise}, \cite{mei2007operator} and the references therein for more information.

The analogous results were then looked for in the non-commutative setting. But the non-commutative nature of the space of operators presents a hindrance in these studies. Firstly, for a collection of elements $(f_i)_i$ in the non-commutative $L_p$-spaces, the commutative way of defining maximal operators as $\sup_i \abs{f_i}$ is not feasible. Other than that the absence of analogues of covering lemma in this particular setting makes the study of maximal inequality in the non-commutative setting a much more subtle job. A maximal inequality and analogous pointwise ergodic theorems were first studied in \cite{hong_maximal_2023} for actions of groups of polynomial growth generalizing the classical results of Breuillard and Tessera \cite{breuillard2014geometry}, \cite{tessera2007volume}. There, the authors majorized the ergodic average associated to group action by a usual discrete average of a Markov map, hence obtaining the required maximal inequality using the same for Markov maps. Later on, this idea was again applied for the actions of unimodular locally compact amenable groups in \cite{chakraborty2025ergodic}. However, for general locally compact amenable groups, a different approach was taken by Cadilhac and Wang \cite{cadilhac2025noncommutativemaximalergodicinequalities}. They used the tiling property of the locally compact, amenable group to obtain a subsequence of a given F\o lner sequence along with an associated sequence of sets and partitions of the group, which resembles the geometric properties of the concentric circles and squares in the Euclidean spaces. Henceforth, they used the non-commutative square function estimates to deduce the weak type maximal inequality.

Nevertheless, the amenability of groups can be equivalently described by the Reiter property (see Definition \ref{Reiter-defn}). So, the existence of the Reiter sequence immediately raises the question of an analogous ergodic theorem along those Reiter sequences. As per our knowledge, we did not find any reference of such theorems even in the classical literature. Therefore, taking inspiration from \cite{cadilhac2025noncommutativemaximalergodicinequalities}, in this article we first ask the question that given a Reiter sequence, does there exist a suitable Reiter subsequence such that a non-commutative pointwise ergodic theorem can be obtained along that sequence and we answer it positively.

Our construction of such Reiter subsequence follows from the inherent geometric properties of the group itself. Eventually, we end up showing that every Reiter sequence has a further subsequence which is an infinite convex combination of normalized characteristic functions associated to certain F\o lner sequences and that the infinite convex combination is compatible with the non-commutative weak type maximal inequalities. In the following, we briefly describe our methodology. \Cref{main-thm} is the main geometric idea of this article.

\begin{enumerate}
    \item Given a Reiter sequence $(h_n)$ defined on the group, we first show that there exists a Reiter sequence consisting of simple functions satisfying the property 
    \begin{align*}
        \sum_{n \in \N} \norm{h_n-s_n}_1< \infty,
    \end{align*}
    and furthermore, we showed that with the help of this condition the ergodic averages associated to $(h_n)$ also satisfy the weak type inequality if the ergodic averages associated to $(s_n)$ have such a property.
    \item In the next step, to write the functions $h_n$ as an appropriate linear combination of characteristic functions of sets, the usual Layer-cake formula is employed. In the literature, this formula is essential in obtaining the F\o lner nets from the probability densities.
    \item Next, to bridge between the Reiter and F\o lner sequences, we observe that the $L_1$-translation error each $h_n$ turns out to be the average over its level sets of the F\o lner defects of those level sets and finally the average layer defect is found to be small. Furthermore, using Chebyshev's inequality, we observe that almost all layer-cake mass is concentrated on the ``good" levels.
	\item Just removing the bad levels, we get the required decomposition of $h_n$.
    \item Finally, with a rearrangement of the F\o lner sequence and the convex coefficients, we make every element of the simple Reiter sequence to be a suitable infinite sum of the normalized F\o lner sequence. See \Cref{decomposition_of_sequence}.
    \item Moreover, as observed in \cite{cadilhac2025noncommutativemaximalergodicinequalities}, every F\o lner sequence of a locally compact, second countable amenable group has a further subsequence, called admissible F\o lner sequence along which the ergodic averages converge pointwise. Analogously, we also construct a subsequence of the given Reiter sequence and show that maximal inequalities and the pointwise ergodic theorem remain true in this setting (see Lemma \ref{Adm_Folner_sequence}).
\end{enumerate}
The following is the main theorem of this article.
\begin{thm}
    Let $h=( h_n) $ be a Reiter sequence on $G$ and for a $\tau$-preserving dynamical $(\CM, G, \alpha)$, we consider the following averages 
$$ 
A_n^h(x)=:  \int_G \alpha_g(x) h_n(g)~ dm, ~ \text{ for } x \in \cup_{1\leq p < \infty} L^p(M, \tau) \text{ and } n \in \N.
$$
Then there exists a  subsequence $s=(s_n) $ of $(h_n)$ such that the sequence $(A_n^s)_n$ satisfies the weak type $(1,1)$ inequality and the strong type $(p,p)$ inequality for $p>1$. Furthermore, for every $x \in L^p(M, \tau)$, $A_n^s(x)$ converges in b.a.u. when $1 \leq p< \infty$ and in a.u. when $2 \leq p< \infty$.
\end{thm}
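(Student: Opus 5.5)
The strategy follows the methodology outlined in the introduction. We reduce the Reiter averages to averages along characteristic functions of admissible F\o lner sets, where the weak type $(1,1)$ inequality of Cadilhac and Wang \cite{cadilhac2025noncommutativemaximalergodicinequalities} is available. We then pass the inequality back through a convex-combination argument together with a summable perturbation.

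\emph{Step 1: simple approximants.} First pass to a subsequence and choose simple functions $s_n\ge 0$ with $\norm{s_n}_1=1$ and $\sum_n\norm{h_n-s_n}_1<\infty$; these $s_n$ are again Reiter. The error operators $E_n(x)=\int_G\alpha_g(x)(h_n-s_n)(g)\,dm$ are controlled by the dominating sum
$$\sum_n \int_G \alpha_g(|x|)\,|h_n-s_n|(g)\,dm .$$
This is positive, and by trace preservation its $L^1$ norm is at most $\sum_n\norm{h_n-s_n}_1\norm{x}_1$. Consequently the maximal norm of $(E_n(x))$ is of weak type $(1,1)$, after splitting $x$ into four positive parts. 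It is also of strong type $(p,p)$, since $\norm{E_n(x)}_p\le\norm{h_n-s_n}_1\norm{x}_p$ and the $\ell^\infty$-valued norm is bounded by the $\ell^1$-sum. Hence it suffices to treat $(s_n)$.

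\emph{Step 2: layer-cake decomposition.} By the layer-cake formula, $s_n=\int_0^\infty \chi_{\{s_n>t\}}\,dt=\int_0^\infty m(F_{n,t})\,\tfrac{\chi_{F_{n,t}}}{m(F_{n,t})}\,dt$, where $F_{n,t}=\{s_n>t\}$. The weights $m(F_{n,t})\,dt$ form a probability measure. Moreover,
$$\norm{gs_n-s_n}_1=\int_0^\infty m(gF_{n,t}\triangle F_{n,t})\,dt ,$$
so the Reiter defect is the average of the F\o lner defects of the level sets. By Chebyshev's inequality, for a compact $K$ the levels whose defect exceeds $\sqrt{\eps_n}$ carry mass at most $\sqrt{\eps_n}$. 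This uses $\sup_{g\in K}$ together with a countable dense set and regularity, which is the delicate point.

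Discarding these bad levels costs a further summable $L^1$ error once $\eps_n$ is chosen summably along the subsequence, and Step 1 absorbs it. Since $s_n$ is simple, there are only finitely many distinct level sets. This yields $s_n'=\sum_k c_{n,k}\,\chi_{F_{n,k}}/m(F_{n,k})$ with $c_{n,k}\ge0$, $\sum_k c_{n,k}\le 1$, and all $F_{n,k}$ being $(K_n,\delta_n)$-invariant (\Cref{main-thm}, \Cref{decomposition_of_sequence}).

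\emph{Step 3: admissible F\o lner sequence and the maximal inequality.} Enumerate all the sets $F_{n,k}$ into one F\o lner sequence. By passing to a further subsequence of $(h_n)$ and using the fact that any F\o lner sequence admits an admissible subsequence (Lemma \ref{Adm_Folner_sequence}), we may assume all the $F_{n,k}$ together form an admissible F\o lner sequence $(F_j)$. Then $A^{s'}_n(x)=\sum_j c_{n,j}\,A_{F_j}(x)$ is a convex combination of averages from a family already satisfying weak $(1,1)$ and strong $(p,p)$ maximal inequalities.

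For positive $x$, if $A_{F_j}(x)\le a$ for all $j$ in the bilateral/one-sided maximal sense (that is, $eA_{F_j}(x)e\le\lambda$ for a projection $e$ with $\tau(1-e)\le C\norm{x}_1/\lambda$), then $e A^{s'}_n(x) e\le\lambda$ as well, because the coefficients are convex. The same works with the $L^p(\ell^\infty)$ norm via the factorization $A_{F_j}(x)=a\,y_j\,b$, since convex combinations preserve $\sup_j\norm{y_j}_\infty\le 1$. This gives both maximal inequalities for $(s_n)$, and hence for the chosen subsequence of $(h_n)$.

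\emph{Step 4: convergence.} By the standard Banach principle in the noncommutative setting, the maximal inequalities reduce b.a.u. (a.u. for $p\ge2$) convergence to convergence on a dense subspace. Take the dense set consisting of fixed points plus elements $y-\alpha_g(y)$, with $y\in L^1\cap L^\infty$ and $g$ ranging over a compact set. On fixed points the averages are constant. On coboundaries, $\norm{A_n^h(y-\alpha_g y)}_\infty\le\norm{h_n-h_n(\cdot\, g^{-1})}_1\norm{y}_\infty\to0$ by the Reiter property (with the appropriate left/right convention and modular factor). The same estimate in $L^p$ uses interpolation with $L^1$. Density of this set follows from the mean ergodic theorem, which splits $L^2$ into fixed vectors and the closure of coboundaries, and then extends to $L^p$.

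\textbf{Main obstacle.} I expect the main obstacle to be Step 2. The Chebyshev selection has to be uniform over $g$ in a compact set, not merely valid for each $g$ separately. It must also be compatible with the subsequence extraction needed for admissibility. I would first try to fix an increasing exhausting sequence of compacts $K_n$ together with summable $\eps_n$, and only afterwards perform a single diagonal extraction.
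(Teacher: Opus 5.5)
The gap is in Step~3, where you assert that, after a further extraction, all the sets $F_{n,k}$ together form one admissible F\o lner sequence $(F_j)$. Neither tool you invoke gives this.

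\begin{itemize}
\item Cadilhac--Wang's Theorem~A only produces a \emph{subsequence} of a F\o lner sequence. Applied to your enumeration, it will in general discard some of the level sets of a given $s_n'$. After that, $s_n'$ is no longer a combination of sets from the admissible family.
\item Lemma~\ref{Adm_Folner_sequence} extracts in $n$ and makes each column $(F_{n,j})_n$ admissible \emph{separately}. That yields countably many admissible sequences, not one.
\end{itemize}

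Nor can the extraction be arranged to keep whole blocks. Admissibility is a sparseness condition that forces a jump of scale between any two members. Already in $\Z$, the level sets $[0,N)$ and $[0,N-1)$ of $s_n=\tfrac12\frac{\mathbf 1_{[0,N)}}{N}+\tfrac12\frac{\mathbf 1_{[0,N-1)}}{N-1}$ cannot both belong to one admissible sequence. An admissible atom at the later index must be almost invariant under the earlier interval, hence much longer than it. The later interval, in turn, must be boundary invariant relative to a set containing a translate of such an atom, which cannot happen for an interval of comparable length. So there is no single projection $e$ with $eA_{F}(x)e\le\lambda$ for all the sets involved. Your convexity argument, and likewise the common factorization $a\,y_j\,b$ you use for the strong type bound, therefore has nothing to act on.

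What is missing is the paper's mechanism for combining countably many admissible families.

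\begin{enumerate}
\item Arrange the sets into columns $j\in\N$, splitting coefficients and repeating sets as needed, so that each column $(F_{n,j})_n$ is a F\o lner sequence.
\item Dominate the coefficients uniformly in $n$: $a_{n,j}\le b_j$ with $\sum_j\sqrt{b_j}<\infty$. This is the purpose of Lemma~\ref{decomposition_of_sequence}, where $b_j=a_{1,j}$.
\item Extract diagonally so that every column is admissible, and use that the weak type constant $C'$ does not depend on the admissible sequence.
\item For column $j$, choose $e_j$ at level $\lambda/(\sqrt{b_j}\sum_i\sqrt{b_i})$ and set $e=\bigwedge_j e_j$. This gives $eA_n^s(x)e\le\lambda$ for all $n$ and $\tau(1-e)\le C'(\sum_j\sqrt{b_j})^2\norm{x}_1/\lambda$.
\item Strong type $(p,p)$ then follows by interpolation with the trivial $(\infty,\infty)$ bound.
\end{enumerate}

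A secondary point concerns Step~2: your suggested route through a countable dense set does not rescue Chebyshev for $\sup_{g\in K}$. Write $D_t(g)=m(F_{n,t}g\,\Delta\,F_{n,t})/m(F_{n,t})$. Chebyshev needs $\int\sup_g D_t(g)\,d\mu_n(t)$ to be small, but you only control $\sup_g\int D_t(g)\,d\mu_n(t)$. The paper instead applies Fubini and Chebyshev to the average of $D_t$ over the thickened compact $UK_n$. It then converts that average into a supremum over $K_n$ via Proposition~\ref{prop: int over good set}. Finally, since the discarded mass is then at most $\sqrt{\eps_n}$, you need $\sum_n\sqrt{\eps_n}<\infty$, not merely $\sum_n\eps_n<\infty$.
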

As an application of this theorem, we obtain a non-commutative Wiener-Wintner theorem along a Reiter sequence, which is a generalization of \cite[Theorem 1.1]{BGK_WW_2026}. In 1941, Wiener-Wintner theorem was first introduced by N. Wiener and A. Wintner in paper \cite{classicalZWW} for a measure preserving transformation $T$ acting on a measure space $X$ ergodically. Later, Bellow and Losert developed this theorem for different weight classes in \cite{BellowLosert}. A non-commutative counterpart of this theorem for the $\Z$-action on a von Neumann algebra $\CM$ with finite trace $\tau$ by an ergodic $\alpha \in Aut(\CM)$ was studied by S. Litvinov in \cite{Lit2014}, where he proved the uniform convergence of Wiener-Wintner averages of the form  
$$\frac{1}{N}\sum_{i=0}^N\lambda^i\alpha^i(x)\quad \lambda \in \T, x \in L^p(\CM,\tau)$$
in the sense of bilaterally almost uniform (b.a.u.) convergence. Similar kind of results were established for different weight classes in multiparameter case by Hong in \cite{Hong-Sun-2018} and M. O'Brien in \cite{OBrien2021NoncommutativeWT}. In 2014 Pavel Zorin-Kranich proved a Wiener-Wintner theorem for a continuous ergodic action of a locally compact second countable group $G$, preserving a probability measure $\mu$ on the compact metric space $X$ in \cite{Pavel}. In \cite{ElAbdalaoui2021}, Abdalaoui proved the Wiener-Wintner theorem using joining of two actions of $G$ on the measure space.  Our approach to proving the non-commutative Wiener-Wintner theorem along a Reiter sequence is similar to \cite{BGK_WW_2026}. Building on the main results of \cite{BGK_WW_2026}, we establish an appropriate version of the van der Corput lemma and use it to prove the desired result.

\subsection{Outline of the paper}
We organize the paper as follows. Section 2 contains basic notions regarding non-commutative $L^p$-spaces, vector-valued non-commutative $L^p$-spaces, and non-commutative dynamical systems. Section 3 is on Reiter sequences: we show that they can be approximated by simple functions. Using layer-cake decomposition, it can be expressed as infinite convex combinations of characteristic functions of admissible F\o lner sequences, after passing through a subsequence. In Section 4, we prove weak $(1,1)$ and strong $(p,p)$ maximal inequalities ($p > 1$) for the ergodic averages $A_n^h(x)$ along a Reiter sequence for a trace-preserving dynamical system $(\mathcal{M}, G, \alpha)$. This yields our main result (Theorem \ref{thm: bau and su convergence}) on the b.a.u. convergence of $A_n^h(x)$. In the last section, we present a non-commutative Wiener-Wintner theorem along a Reiter sequence as an application.

\section{Preliminaries}\label{preli}
Throughout this article, let \(\mathcal{M}\) denote a von Neumann algebra acting on a separable Hilbert space \(\mathcal{H}\), i.e., \(\mathcal{M}\subseteq \mathcal{B}(\mathcal{H})\). The operator norm inherited from \(\mathcal{B}(\mathcal{H})\) is written as \(\lVert \cdot \rVert_\infty\); when no confusion can arise, we abbreviate it simply as \(\lVert \cdot \rVert\). The commutant of \(\mathcal{M}\) is denoted by \(\mathcal{M}'\). We further write \(\mathcal{P}(\mathcal{M})\) for the collection of projections contained in \(\mathcal{M}\).

Let \(\mathcal{M}_*\) denote the predual of \(\mathcal{M}\). Elements of \(\mathcal{M}_*\) are called \emph{normal linear functionals} on \(\mathcal{M}\). A functional \(\phi \in \mathcal{M}_*\) is said to be \emph{positive} if
\[
\phi(x) \geq 0 \quad \text{for all } x \in \mathcal{M}_+,
\]
where \(\mathcal{M}_+\) denotes the cone of positive elements in \(\mathcal{M}\). A positive functional \(\phi \in \mathcal{M}_*\) is called a \emph{state} if \(\phi(1) = 1\). It is said to be \emph{faithful} if, for every \(x \in \mathcal{M}_+\),
\[
\phi(x) = 0 \quad \Longleftrightarrow \quad x = 0.
\]

For notational convenience, the term \emph{faithful normal} will be abbreviated as \emph{f.n.} Given an f.n.\ state \(\phi\) on \(\mathcal{M}\), we denote by \(L^2(\mathcal{M},\phi)\) the Hilbert space arising from the Gelfand--Naimark--Segal (GNS) construction associated with \(\phi\). For each \(a \in \mathcal{M}\), we write \(\widehat{a}\) for the corresponding vector in \(L^2(\mathcal{M},\phi)\). Moreover, we identify \(\mathcal{M}\) with its image in \(\mathcal{B}(L^2(\mathcal{M},\phi))\) under the canonical GNS representation.

\subsection{Non-commutative $L^p$-spaces  ($1\leq p\leq\infty $)}

Let $\mathcal{M} \subseteq \mathcal{B}(\mathcal{H})$ be a von Neumann algebra equipped with a faithful, normal, semifinite (f.n.s.) trace $\tau$. An (possibly unbounded) operator 
$$x: \mathcal{D}(x) \subseteq \mathcal{H} \to \mathcal{H},$$
which is densely defined, closed, and affiliated with $\mathcal{M}$ (i.e., satisfying $u'^* x u' = x$ for every unitary $u' \in \mathcal{M}'$), will be denoted as $x \,\eta\, \mathcal{M}$. Such an operator $x$ is said to be $\tau$-measurable if, for every $\varepsilon > 0$, there exists a projection $e \in \mathcal{P}(\mathcal{M})$ such that $\tau(1-e) < \varepsilon$ and $e\mathcal{H} \subseteq \mathcal{D}(x)$. The collection of all $\tau$-measurable operators affiliated with $\mathcal{M}$ is then defined by
\[
L^{0}(\mathcal{M},\tau) := \{\, x \,\eta\, \mathcal{M} : x \text{ is } \tau\text{-measurable}\,\}.
\]

The space $L^{0}(\mathcal{M},\tau)$ forms a $^*$-algebra under the adjoint operation, strong sum $x+y := \overline{x+y}$, and strong product $x \cdot y := \overline{xy}$, where $\overline{x}$ denotes the closure of $x$. Detailed descriptions can be found in \cite{stratila2019lectures, hiai2021lectures}.

Several natural topologies arise on $L^{0}(\mathcal{M},\tau)$. The first relevant one is the \emph{measure topology}, whose neighborhoods are of the form $x + \mathcal{N}(\varepsilon,\delta)$ for $\varepsilon,\delta > 0$ and $x \in L^{0}(\mathcal{M},\tau)$, where
\[
\mathcal{N}(\varepsilon,\delta) := \{\, x \in L^{0}(\mathcal{M},\tau) : \text{there exists } e \in \mathcal{P}(\mathcal{M}) \text{ with } \tau(1-e) < \delta \text{ and } \|exe\| < \varepsilon \,\}.
\]
A net $\{x_i\}_{i\in I}$ converges to $x$ in the measure topology precisely when, for every $\varepsilon,\delta>0$, there exists $i_0\in I$ such that for all $i\ge i_0$ one may find $e_i\in \mathcal{P}(\mathcal{M})$ with $\tau(1-e_i) < \delta$ and
\[
\|\, e_i (x_i - x)e_i \,\| < \varepsilon.
\]

By \cite[Theorem 1]{nelson1974notes}, the space $L^{0}(\mathcal{M},\tau)$ is complete in the measure topology. Furthermore, this topology is metrizable, and $\mathcal{M}$ is dense in $L^{0}(\mathcal{M},\tau)$.

The topologies of primary interest are the \emph{bilateral almost uniform} (b.a.u.) and \emph{almost uniform} (a.u.) topologies.

\begin{defn}\label{bau conv defn}
A net $\{x_i\}_{i\in I}\subseteq L^{0}(\mathcal{M},\tau)$ is said to converge \emph{bilaterally almost uniformly} (b.a.u.) (\textit{resp.} \emph{almost uniformly} (a.u.)) to $x\in L^{0}(\mathcal{M},\tau)$ if, for every $\varepsilon>0$, there exists a projection $e \in \mathcal{P}(\mathcal{M})$ such that $\tau(1-e) < \varepsilon$ and 
\[
\lim_{i} \|\, e(x_i - x)e \,\| = 0 \quad (\textit{resp. } \lim_{i} \|\, (x_i - x)e \,\| = 0).
\]
\end{defn}

Clearly, almost uniform (a.u.) convergence of a net implies bilateral almost uniform (b.a.u.) convergence. 

The trace $\tau$ on $\mathcal{M}_+$ extends in the standard way to $L^{0}(\mathcal{M},\tau)_+$ by
\[
\tau(x) := \int_0^{\infty} \lambda \, d\tau(e_\lambda),
\]
where $x = \int_{0}^{\infty} \lambda\, de_\lambda$ is the spectral decomposition of $x$. 

For $1\leq p <\infty$, consider the set 
\[
L^p(\mathcal{M}, \tau) := \{x\in L^0(\mathcal{M},\tau) : \tau(|x|^p)<\infty\}.
\]
This forms a normed linear space with the norm $\|x\|_p := \tau(|x|^p)^{1/p}$. For $p=\infty$, we define $L^\infty(\mathcal{M}, \tau) := \mathcal{M}$ with the norm $\|\cdot\|_\infty$ being the usual operator norm $\|\cdot\|$ in $\mathcal{M}$. The space $L^p(\mathcal{M}, \tau)$ is called the \emph{non-commutative $L^p$-space}. 

We recall the following theorem from \cite{hiai2021lectures}.

\begin{thm}\label{Density_thm}
For every $p \in [1,\infty]$,~~ $L^p(\mathcal{M}, \tau)$ is a Banach space with the norm $\|\cdot\|_p$. In particular, $L^2(\mathcal{M}, \tau)$ is a Hilbert space with the inner product
\[
\langle x,y\rangle = \tau(y^*x).
\]
Moreover, $\mathcal{M} \cap L^1(\mathcal{M}, \tau)$ is dense in $L^p(\mathcal{M}, \tau)$ for any $p \in [1,\infty)$.
\end{thm}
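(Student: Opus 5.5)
This is the standard foundational result on non-commutative $L^p$-spaces, recalled from \cite{hiai2021lectures}. The plan is to realise each $L^p(\mathcal{M},\tau)$ inside the complete metrizable space $L^0(\mathcal{M},\tau)$ and to transfer the classical arguments through generalized singular numbers. Define $\mu_t(x)=\inf\{\|xe\| : e\in\mathcal{P}(\mathcal{M}),\ \tau(1-e)\le t\}$. Then $\tau(|x|^p)=\int_0^\infty \mu_t(x)^p\,dt$, so $\|x\|_p=\|\mu(x)\|_{L^p(0,\infty)}$.

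\textbf{Triangle inequality.} First I would establish the Hölder inequality $\|xy\|_1\le\|x\|_p\|y\|_{p'}$ and its dual form
\[
\|x\|_p=\sup\{|\tau(xy)| : \|y\|_{p'}\le 1,\ y\in\mathcal{M}\cap L^1\}.
\]
The inequality follows from the majorization $\int_0^s\mu_t(xy)\,dt\le\int_0^s\mu_t(x)\mu_t(y)\,dt$ together with the scalar Hölder inequality. The dual formula comes from choosing $y=|x|^{p-1}u^*$ via polar decomposition $x=u|x|$, first for $x\in\mathcal{M}\cap L^1$ and then by approximation. The triangle inequality is then immediate from the dual formula, so $\|\cdot\|_p$ is a norm. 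Faithfulness of $\tau$ gives definiteness.

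\textbf{Completeness.} This is the main step. Convergence in $\|\cdot\|_p$ implies convergence in measure, by Chebyshev applied to spectral projections: $\tau(e_{(\varepsilon,\infty)}(|x|))\le\varepsilon^{-p}\|x\|_p^p$. Hence a $\|\cdot\|_p$-Cauchy sequence $(x_n)$ is Cauchy in measure. By \cite[Theorem 1]{nelson1974notes} it converges in measure to some $x\in L^0(\mathcal{M},\tau)$. It then remains to show two things:
\begin{itemize}
\item $x\in L^p$ and $\|x_n-x\|_p\to0$;
\item for $p=\infty$, completeness holds directly, since $\mathcal{M}$ is norm closed in $\mathcal{B}(\mathcal{H})$.
\end{itemize}
For the first, I would use a Fatou property: if $y_k\to y$ in measure, then $\|y\|_p\le\liminf_k\|y_k\|_p$. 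This follows because $\mu_t$ is lower semicontinuous with respect to the measure topology, combined with the scalar Fatou lemma. Applying it to $y_k=x_n-x_k$ as $k\to\infty$ gives $\|x_n-x\|_p\le\sup_{k\ge n}\|x_n-x_k\|_p\to0$. I expect this Fatou/lower-semicontinuity step to be the main obstacle.

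\textbf{Hilbert space and density.} For $p=2$, the form $\langle x,y\rangle=\tau(y^*x)$ is well defined by Hölder, since $y^*x\in L^1$. It is sesquilinear and positive, and $\langle x,x\rangle=\|x\|_2^2$, so $L^2$ is a Hilbert space.

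For density, take $x\in L^p$ with $x=u|x|$ and set $e_n=\chi_{[1/n,n]}(|x|)$. Then:
\begin{itemize}
\item $\tau(e_n)\le n^p\|x\|_p^p<\infty$, so $x e_n\in\mathcal{M}\cap L^1$, because it is bounded by $n$ and supported on a projection of finite trace;
\item $\|x-xe_n\|_p^p=\tau\bigl(|x|^p\chi_{[0,1/n)\cup(n,\infty)}(|x|)\bigr)\to0$ by normality of $\tau$ (monotone convergence in the spectral integral).
\end{itemize}
This gives density of $\mathcal{M}\cap L^1(\mathcal{M},\tau)$ in $L^p(\mathcal{M},\tau)$ for $1\le p<\infty$.
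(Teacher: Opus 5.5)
The paper gives no proof of this statement; it is quoted from \cite{hiai2021lectures}, so there is no internal argument to compare against. Your outline is the standard Fack--Kosaki route through generalized singular numbers. Three parts are sound as written: the completeness argument, the Hilbert space part, and the density part. In the completeness argument, the Fatou property you flag as the main obstacle is actually a two-line consequence of $\mu_{t+s}(x)\le\mu_t(x_k)+\mu_s(x-x_k)$, of $\mu_s(x-x_k)\to0$ for every $s>0$, and of right continuity of $t\mapsto\mu_t(x)$. Two steps in your proof that $\|\cdot\|_p$ is a norm do need repair.

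\textbf{The extremal element in the duality formula.} It is not in $\mathcal{M}\cap L^1$, where you need it. For $x\in\mathcal{M}\cap L^1(\mathcal{M},\tau)$ and $1\le p<2$, the operator $|x|^{p-1}u^*$ need not lie in $L^1(\mathcal{M},\tau)$. For instance, take $\mathcal{M}=\ell^\infty(\N)$ with counting measure and $x=(k^{-2})_k$; for $p=3/2$ you get $(k^{-1})_k$. So the step ``first for $x\in\mathcal{M}\cap L^1$'' fails as written. The follow-up ``then by approximation'' is also awkward, because you do not yet know that $\|\cdot\|_p$ is a norm.

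Both issues disappear if you test directly against $y_n=e_n|x|^{p-1}u^*$, using your density projections $e_n=\chi_{[1/n,n]}(|x|)$. These lie in $\mathcal{M}\cap L^1$, satisfy $\tau(xy_n)=\tau(|x|^pe_n)$ and $\|y_n\|_{p'}=\tau(|x|^pe_n)^{1/p'}$. Hence $\tau(xy_n)/\|y_n\|_{p'}=\tau(|x|^pe_n)^{1/p}\uparrow\|x\|_p$ for every $x\in L^p$ at once. Alternatively, let $y$ range over $L^{p'}$, where $|x|^{p-1}u^*/\|x\|_p^{p-1}$ works as it stands.

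\textbf{Closure under addition.} To obtain $\|x+z\|_p\le\|x\|_p+\|z\|_p$ from the duality formula, you must already know $x+z\in L^p$. You never show that $L^p$ is closed under addition, and you use this again when concluding $x\in L^p$ from $x_n-x\in L^p$ in the completeness step. It follows from $\mu_{2t}(x+z)\le\mu_t(x)+\mu_t(z)$, which gives $\|x+z\|_p^p\le 2\,\|\mu(x)+\mu(z)\|_{L^p(0,\infty)}^p<\infty$.

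Since you already invoke Fack--Kosaki majorization for H\"older, there is a shorter route. The submajorization $\int_0^s\mu_t(x+z)\,dt\le\int_0^s\mu_t(x)\,dt+\int_0^s\mu_t(z)\,dt$, combined with the Hardy--Littlewood--P\'olya inequality, gives both closure under addition and the triangle inequality in one step. This makes the duality detour unnecessary for the norm property.
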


\subsection{Vector-valued non-commutative $L^p$-spaces}

Let $1 \leq p < \infty$. The space $L^p(\mathcal{M}; \ell^\infty)$ is defined as the set of all sequences $(x_n)_{n \geq 0}$ in $L^p(\mathcal{M})$ that admit a factorization of the form
\begin{equation}\label{facto}
    x_n = a y_n b \quad \text{for all } n \geq 0,  
\end{equation}
where $(y_n)_{n \geq 0}$ is a bounded sequence in $\mathcal{M}$, and $a, b \in L^{2p}(\mathcal{M})$. The norm on this space is defined by
\[ 
\|(x_n)_{n \geq 0}\|_{L^p(\mathcal{M}; \ell^\infty)} = \inf \left\{ \|a\|_{2p} \left( \sup_{n \geq 0} \|y_n\|_\infty \right) \|b\|_{2p} \right\},         
\]
where the infimum is taken over all possible factorizations of $(x_n)_{n \geq 0}$ satisfying \eqref{facto}.

\begin{prop}
    Let $(x_n)_{n \geq 0}$ be a sequence of positive elements in $L^p(\mathcal{M}, \tau)$. Then $(x_n)_{n \geq 0} \in L^p(\mathcal{M}; \ell^\infty)$ if and only if there exists an element $a \in L^p(\mathcal{M})_+$ such that 
    \[ 
    x_n \leq a \quad \text{for all } n \geq 0. 
    \]
    Furthermore, the norm is given by
    \[ 
    \|(x_n)_{n \geq 0}\|_{L^p(\mathcal{M}; \ell^\infty)} = \inf \big\{ \|a\|_p : a \in L^p(\mathcal{M})_+ \text{ such that } x_n \leq a \text{ for all } n \geq 0 \big\}.
    \]
\end{prop}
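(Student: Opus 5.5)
We have to prove the characterization of positive sequences in $L^p(\CM;\ell^\infty)$. We establish the two inclusions and then match the norms in both directions.

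\emph{From a dominating element to a factorization.} Suppose $0\le x_n\le a$ for all $n$, with $a\in L^p(\CM)_+$. We factor through $a^{1/2}\in L^{2p}(\CM)$. Since $x_n^{1/2}x_n^{1/2}=x_n\le a=a^{1/2}a^{1/2}$, the standard Douglas-type lemma for $\tau$-measurable operators gives a contraction $u_n\in\CM$ with $x_n^{1/2}=u_n a^{1/2}$. Concretely, $u_n$ is the strong limit of $x_n^{1/2}(a+\eps)^{-1/2}$, or equivalently it is defined on the range of $a^{1/2}$ and extended by $0$ on its orthogonal complement. Then
$$x_n=a^{1/2}u_n^*u_na^{1/2}=a^{1/2}y_na^{1/2}, \qquad y_n:=u_n^*u_n,$$
with $\|y_n\|_\infty\le 1$. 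This gives $\|(x_n)\|_{L^p(\CM;\ell^\infty)}\le\|a^{1/2}\|_{2p}^2=\|a\|_p$. Taking the infimum over admissible $a$ yields one norm inequality.

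\emph{From a factorization to a dominating element.} Suppose $x_n=ay_nb$ with $\sup_n\|y_n\|_\infty\le 1$ after normalization. Because $x_n\ge 0$, we have $x_n=\tfrac12(x_n+x_n^*)=\tfrac12(ay_nb+b^*y_n^*a^*)$. We apply the operator inequality
$$\pm(ay_nb+b^*y_n^*a^*)\le t\,aa^*+t^{-1}b^*b \qquad (t>0).$$
This follows by expanding
$$0\le (t^{1/2}a^*\mp t^{-1/2}y_nb)^*(t^{1/2}a^*\mp t^{-1/2}y_nb)$$
and using $b^*y_n^*y_nb\le b^*b$. It gives $x_n\le \tfrac12(t\,aa^*+t^{-1}b^*b)=:c_t$, and $c_t\in L^p(\CM)_+$ by Hölder's inequality. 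Moreover
$$\|c_t\|_p\le\tfrac12\big(t\|a\|_{2p}^2+t^{-1}\|b\|_{2p}^2\big),$$
and choosing $t=\|b\|_{2p}/\|a\|_{2p}$ gives $\|c_t\|_p\le\|a\|_{2p}\|b\|_{2p}$. Taking the infimum over factorizations yields the reverse norm inequality. This completes both the equivalence and the norm formula.

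The main obstacle is the first direction: proving the Douglas factorization rigorously for possibly unbounded $\tau$-measurable $a$. I would handle it by setting
$$u_{n,\eps}=x_n^{1/2}(a+\eps)^{-1/2}\in\CM.$$
From $x_n\le a+\eps$ one gets $\|u_{n,\eps}\|\le 1$, since $u_{n,\eps}^*u_{n,\eps}=(a+\eps)^{-1/2}x_n(a+\eps)^{-1/2}\le 1$. Next, take a weak$^*$ cluster point $u_n$ as $\eps\to0$. Finally, verify $u_na^{1/2}=x_n^{1/2}$ by checking the identity on vectors in the domain of $a^{1/2}$, using that $a^{1/2}(a+\eps)^{-1/2}\to s(a)$ strongly and that $x_n^{1/2}$ vanishes on $\ker a$, because $x_n\le a$ forces $s(x_n)\le s(a)$.
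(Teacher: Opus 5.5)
The paper gives no proof of this proposition; it is recalled as a standard fact from the Junge--Xu theory of $L^p(\CM;\ell^\infty)$. Your argument is correct and is the standard one. For the implication from domination to factorization you use a Douglas-type factorization $x_n=a^{1/2}u_n^*u_na^{1/2}$. For the converse you use the operator AM--GM bound $ay_nb+b^*y_n^*a^*\le t\,aa^*+t^{-1}b^*b$. The two resulting norm inequalities match exactly.

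A few points in the Douglas step deserve tightening.

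\emph{Passing to the limit.} Since $x_n^{1/2}$ is unbounded, the strong convergence $(a+\eps)^{-1/2}a^{1/2}\xi\to s(a)\xi$ does not by itself let you pass $x_n^{1/2}$ through the limit. Instead, use that the graph of $x_n^{1/2}$ is closed and convex, hence weakly closed. Set $\zeta_\eps:=(a+\eps)^{-1/2}a^{1/2}\xi$. Then the pairs $\bigl(\zeta_\eps, x_n^{1/2}\zeta_\eps\bigr)=\bigl(\zeta_\eps,u_{n,\eps}a^{1/2}\xi\bigr)$ converge weakly along your subnet to $\bigl(s(a)\xi,u_na^{1/2}\xi\bigr)$.

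\emph{The composition.} The identity above requires that $u_{n,\eps}$ acts as the literal composition $x_n^{1/2}(a+\eps)^{-1/2}$. This amounts to $\mathrm{Ran}\,(a+\eps)^{-1/2}\subseteq D(x_n^{1/2})$, which follows from the boundedness of $u_{n,\eps}$ and the closedness of $x_n^{1/2}$.

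\emph{Equality in $L^0$.} The identity on $D(a^{1/2})$ only gives $u_na^{1/2}\subseteq x_n^{1/2}$. Equality in $L^0(\CM,\tau)$ then follows because a $\tau$-measurable operator has no proper $\tau$-measurable extension.

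\emph{Degenerate case.} In the second direction, dispose of the case $\|a\|_{2p}\|b\|_{2p}=0$ before choosing $t$; in that case all $x_n=0$.
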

\begin{defn}[Strong Type $(p,p)$ Inequality]\label{strong_pp}
Let $1 \leq p < \infty$, and let $A = (A_n)_{n \in \mathbb{N}}$ be a sequence of maps on $L^p(\mathcal{M})$. We say that $A$ satisfies a \emph{strong type $(p,p)$ maximal inequality} (or simply the strong $(p,p)$ inequality) if $A$ defines a bounded linear operator from $L^p(\mathcal{M}, \tau)$ into the vector-valued space $L^p(\mathcal{M}; \ell^\infty)$. 
Explicitly, this means that there exists a constant $C_p > 0$ such that for every $x \in L^p(\mathcal{M})$, the sequence $(A_n(x))_{n \in \mathbb{N}}$ belongs to $L^p(\mathcal{M}; \ell^\infty)$ and satisfies
\[
\|(A_n(x))_{n \in \mathbb{N}}\|_{L^p(\mathcal{M}; \ell^\infty)} \leq C_p \|x\|_p.
\]

\medskip
\noindent
Equivalently,  for positive elements, 
by the definition of the $L^p(\mathcal{M}; \ell^\infty)$ norm, the sequence of positive maps $A = (A_n)_{n \in \mathbb{N}}$ satisfies the strong type $(p,p)$ inequality if and only if there exists a constant $C_p > 0$ such that for every positive element $x \in L^p(\mathcal{M})_+$, one can find a positive element $a \in L^p(\mathcal{M})_+$ satisfying:
\begin{enumerate}
    \item $A_n(x) \leq a \quad \text{for all } n \in \mathbb{N}$, and
    \item $\|a\|_p \leq C_p \|x\|_p$.
\end{enumerate}
\end{defn}

\medskip
\noindent
We now define the space \(\Lambda_{1, \infty}(\mathcal{M}; \ell^\infty)\), consisting of all sequences \(x = (x_n)_{n \in \mathbb{N}}\) in \(L^1(\mathcal{M})\) for which the quantity
\[
\|x\|_{\Lambda_{1, \infty}(\mathcal{M}; \ell^\infty)} := \sup_{\lambda>0} \lambda \inf \big\{ \tau(1-e) : e \in \mathcal{P}(\mathcal{M}), \ e x_n e \leq \lambda \text{ for all } n \in \mathbb{N} \big\}
\]
is finite. Equipped with this quantity as a quasi-norm, \(\Lambda_{1, \infty}(\mathcal{M}; \ell^\infty)\) is a \emph{quasi-Banach space}. 
\begin{defn}
Let  \(A = (A_n)_{n \in \mathbb{N}}\) be a sequence of maps on \(L^1(\mathcal{M})\). We say that \(A\) satisfies the \emph{weak type \((1,1)\) inequality} if \(A\) defines a bounded map from \(L^1(\mathcal{M})\) into \(\Lambda_{1, \infty}(\mathcal{M}; \ell^\infty)\).

\medskip
\noindent
Equivalently, 
for a sequence of maps  $A = (A_n)_{n \in \mathbb{N}}$  on $L^1(\mathcal{M}, \tau)$,  we say that $A$ satisfies the \emph{weak type $(1,1)$ inequality} if there exists a constant $C > 0$ such that for every positive element $x \in L^1(\mathcal{M})_+$ and every $\lambda > 0$, there exists a projection $e \in \mathcal{P}(\mathcal{M})$ satisfying:
\begin{enumerate}
    \item $\tau(1-e) \leq \frac{C}{\lambda} \|x\|_1$, and
    \item $e A_n(x) e \leq \lambda e \quad \text{for all } n \in \mathbb{N}$.
\end{enumerate}

\end{defn}

%

\subsection{Noncommutative dynamical systems}

\begin{defn}\label{nc dyn sys} 
Let $(E,\Vert{}\cdot\Vert{})$ be a real ordered Banach space, and let $G$ be a locally compact second countable (l.c.s.c.) group. A \emph{noncommutative dynamical system} is a triple $(E,G,\gamma)$, where $\gamma : G \to \mathcal{B}(E)$ is a mapping satisfying $\gamma_s \circ \gamma_t = \gamma_{st}$ for all $s,t \in G$, and the following conditions hold:
\begin{enumerate}
    \item For every $a \in E$, the orbit map $s \mapsto \gamma_s(a)$ is continuous on $G$, where $E$ is equipped with the norm topology (and the $w^*$-topology in the case where $E=\mathcal{M}$).
    \item The operators are uniformly bounded, i.e., $\displaystyle\sup_{s \in G}\|\gamma_s\| < \infty$.
    \item Positivity is preserved: if $a \ge 0$, then $\gamma_s(a) \ge 0$ for all $s\in G$.
\end{enumerate}
\end{defn}
We further assume that if $E = \mathcal{M}$, then $\gamma_{s}$ is a $^*$-automorphism on $\mathcal{M}$ for all $s \in G$. Furthermore, suppose $\mathcal{M}$ is a semifinite von Neumann algebra with a faithful normal weight $\tau$. We say that $(\mathcal{M}, G, \alpha)$ is $\tau$-preserving if $\tau \circ \alpha_g = \tau$ for all $g \in G$.

Let $(\mathcal{M}, G, \alpha)$ be a noncommutative dynamical system, where $G$ is a l.c.s.c. group acting on a semifinite von Neumann algebra $\mathcal{M}$ with a faithful normal tracial weight $\tau$. The action is given by a homomorphism $ \alpha : G \to \mathrm{Aut}(\mathcal{M}),$
where each $\alpha_g$ is a $^*$-automorphism of $\mathcal{M}$. This means that for every $g \in G$ and every $x \in \mathcal{M}$, $ \alpha_g(x^*) = \alpha_g(x)^*.$

\section{Averages Along Reiter Sequences}
In this section, we recall some basics of Reiter sequences on locally compact groups. 
Suppose  $G$ is a l.c.s.c.  group with a Haar measure $m$. We recall the following definitions. 
\begin{defn}\label{Reiter-defn}
A sequence $ (h_n)$    in $L^1(G, m)$ such that $$ h_n \geq 0, ~~ \int_G h_n d m = 1, ~ \text{ for all } n \in \N$$ is called a \emph{ left (resp. right) Reiter sequence } if 
$$ \lim_{ n \to \infty } \norm{ \lambda_g(h_n)- h_n}_1= 0, (\textit{resp.} \lim_{ n \to \infty } \norm{ \rho_g(h_n)- h_n}_1= 0)\text{ for all } g \in G,$$
where $ \lambda_g(h)(s) = h(g^{-1} s)$ and $\rho_g(h)(s) = h( sg)$ , for $ g, s \in G$ and $ h \in L^1(G, m)$. Equivalently, for any compact subset $K$ of $G$
\begin{align*}
    \lim_{n \to \infty} \sup_{g \in K}\norm{ \lambda_g(h_n)- h_n}_1= 0, ~~(\textit{resp.}\lim_{n \to \infty} \sup_{g \in K}\norm{ \rho_g(h_n)- h_n}_1= 0).
\end{align*}
The argument is similar to the F\o lner sequence case \cite{Paterson1988}.
\end{defn}

Now let $\CM$ be a semifinite von Neumann algebra with f.n.s trace $\tau$ and $ (\CM, G , \alpha) $ be a dynamical system preserving the f.n.s trace $ \tau$. Then for $g \in G$, the map $\alpha_g $ extends to $L^p(\CM, \tau)$, we denote the extension by same symbol $ \alpha_g$ and $( L^p(\CM, \tau), G, \alpha)$ becomes a non-commutative dynamical system.

\medskip
\noindent
Let $h=( h_n) $ be right Reiter sequence on $G$ with right Haar measure $m$ and for a $\tau$-preserving system $(\CM, G, \alpha)$, we consider the following averages 
$$ A_n^h(x)=:  \int_G \alpha_g(x) h_n(g)~ dm, ~ \text{ for } x \in L^p(\CM, \tau) \text{ and } n \in \N.$$
In this article our main aim is prove weak type $(1, 1)$ and strong $(p,p)$, $(p > 1)$ maximal inequality for averaging maps $A^h=( A_n^h)$. Indeed, we prove that there exists a subsequence $(n_k)$ such that the maps $(( A_{n_k}^h)$, satisfies the weak type $(1, 1)$ and strong $(p,p)$, $(p > 1)$ maximal inequality.   We begin by proving the following theorem as a first step. 
\begin{thm}\label{sum-maximal}
Let $G$ be a locally compact group with a right Haar measure $m$. Suppose $ (f_n)$ and $(h_n) $ are two right Reiter sequences such that  $ c=\displaystyle \sum_{ n=1}^\infty  \norm{ h_n-f_n}_{ L^1(G, m) } < \infty $ and suppose that $(A_n^f)$ satisfies the weak type $(1, 1)$ maximal inequality. Then $(A_n^h)$ also satisfies the weak type $(1, 1)$ maximal inequality.
\end{thm}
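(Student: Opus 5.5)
Fix $x \in L^1(\CM)_+$ and $\lambda>0$. The plan is to split
\[
A_n^h(x)=A_n^f(x)+\bigl(A_n^h(x)-A_n^f(x)\bigr)=A_n^f(x)+\int_G \alpha_g(x)\,\bigl(h_n(g)-f_n(g)\bigr)\,dm,
\]
and control the two pieces separately. The first piece is handled by the hypothesis. The second is handled by the summability $\sum_n\norm{h_n-f_n}_1<\infty$: we dominate all the error terms at once by a single positive element of $L^1(\CM)$ whose norm is at most $c\norm{x}_1$.

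\textbf{Step 1 (the error terms).} Write $d_n=h_n-f_n$. Pointwise we have $-|d_n|\le d_n\le |d_n|$. Since $\alpha_g(x)\ge 0$, this gives the operator inequality
\[
-\int_G\alpha_g(x)\,|d_n(g)|\,dm\;\le\; A_n^h(x)-A_n^f(x)\;\le\; \int_G\alpha_g(x)\,|d_n(g)|\,dm .
\]
Define $y_n=\int_G\alpha_g(x)|d_n(g)|\,dm\ge 0$. Trace preservation gives $\norm{y_n}_1=\tau(x)\norm{d_n}_1$, the integral being a Bochner integral in $L^1(\CM)$, with $\tau$ normal. Now set $b=\sum_n y_n$. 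This series converges in $L^1(\CM)_+$, and $\norm{b}_1\le c\norm{x}_1$. Then $A_n^h(x)\le A_n^f(x)+b$ for every $n$.

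\textbf{Step 2 (choosing projections).} By the weak type $(1,1)$ hypothesis for $(A_n^f)$ with constant $C$, there is a projection $e_1$ with $\tau(1-e_1)\le \frac{2C}{\lambda}\norm{x}_1$ and $e_1A_n^f(x)e_1\le \frac{\lambda}{2}e_1$ for all $n$. For $b$ we use the spectral projection $e_2=\chi_{[0,\lambda/2]}(b)$. Chebyshev's inequality in $L^1(\CM)$ gives $\tau(1-e_2)\le \frac{2}{\lambda}\norm{b}_1\le\frac{2c}{\lambda}\norm{x}_1$, and $e_2be_2\le\frac{\lambda}{2}e_2$.

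\textbf{Step 3 (the main obstacle: combining the projections).} We would take $e=e_1\wedge e_2$, which satisfies $\tau(1-e)\le\tau(1-e_1)+\tau(1-e_2)\le \frac{2(C+c)}{\lambda}\norm{x}_1$. Since $e\le e_1$ and $e\le e_2$, compressing gives $eA_n^f(x)e=e\,(e_1A_n^f(x)e_1)\,e\le\frac{\lambda}{2}e$, and similarly $ebe\le \frac{\lambda}{2}e$. Hence $eA_n^h(x)e\le \lambda e$ for all $n$, which is the weak type $(1,1)$ inequality with constant $2(C+c)$. Two points need care. The first is the order inequality $A_n^h(x)\le A_n^f(x)+b$ in the possibly unbounded setting; this holds because positivity of $\int\alpha_g(x)\phi(g)\,dm$ for $\phi\ge0$ is preserved under $L^1$-limits. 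The second is justifying that $b$ is a genuine element of $L^1(\CM)$, which follows from absolute convergence of the series in the Banach space $L^1(\CM)$.

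For general (non-positive) $x$, we would decompose $x$ into four positive parts in the usual way; only positive $x$ is needed by the equivalent formulation of the weak type inequality given above.
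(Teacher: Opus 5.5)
Your proposal is correct and follows the paper's argument. The paper likewise dominates all error terms by a single element $a=\int_G\alpha_g(x)\,t(g)\,dm$ with $t=\sup_n\abs{h_n-f_n}\in L^1(G,m)$ (you use $\sum_n\abs{h_n-f_n}$ instead, which works equally well), takes the spectral projection $\chi_{[0,\lambda]}(a)$, and combines this with the hypothesis on $(A_n^f)$. Your explicit choice $e=e_1\wedge e_2$ at level $\lambda/2$ spells out the combination step the paper leaves implicit, and your bound $\tau(1-e_2)\le 2c\norm{x}_1/\lambda$ correctly carries the factor $c$ that the paper's written estimate drops.
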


\begin{proof}
 First consider $ t = \underset{ n \in \N}{\sup} \abs{ h_n-f_n } $. Since $ \displaystyle \sum_{ n=1}^\infty  \norm{ h_n-f_n}_{ L^1(G, m) } < \infty $, we note that $ t \in L^1(G, m)$. Indeed, it follows that 
\[
t(x)=\sup_n |h_n(x)-f_n(x)|
\leq \sum_{n=1}^{\infty}|h_n(x)-f_n(x)|,
\]
Then by the monotone convergence theorem, we have  
\[
\|t\|_1
\leq \sum_{n=1}^{\infty}\|h_n-f_n\|_1<\infty.
\]
Hence, \(t\in L^1(G,m)\).

\medskip
\noindent
Let $ x \in L^1(\CM, \tau)_+$, then consider $ a = \int_G \alpha_g(x) t(g) dm$ and it follows that $ a \in L^1(M, \tau)_+$. Let $ w = (h_n-f_n)$, then   we note that 
$$ A^w_n(x) \leq a, ~~ \text{ for all } n \in \N .$$
Therefore, for given $ \lambda >0$, Consider the projection 
$e = \chi_{[0, \lambda]}(a)$, then note that 
$$ eA_n^w(x)e \leq \lambda e, ~ \text{ for all } n \in  \N \text{ and } \tau(1-e) \leq \frac{\norm{x}}{\lambda}.$$
Thus,  the maps $(A_n^w)  $ satisfies weak type $(1, 1)$ maximal inequality.
We note that $ A_n^h = A_n^w +A_n^f$ for all $n \in N$. As both the maps $(A_n^w )$ and  $(A_n^f)$ satisfies the weak type $(1, 1)$  maximal inequality, it follows that $ ( A_n^h)$
also  satisfies the weak type $(1, 1)$ maximal inequality.
\end{proof}

Now we like to establish that it is enough to take the Reiter sequence to the Reiter sequence of simple functions. 
Indeed, we have the following results. 
\begin{lem}\label{finite-sum-aprox}
 Let $G$ be a locally compact group with a right Haar measure $m$.  Suppose $(h_n)$ be a right Reiter sequence, and $(\eps_n)\subseteq (0, 1)$ is a sequence of positive reals with $ \displaystyle\lim_{n \to \infty} \eps_n = 0$; then there exists a right Reiter sequence $(s_n)$ consisting of simple functions such that 
 $$ \norm{ h_n-s_n}_{ L^1(G, m )} \leq \eps_n, \text{ for all } n \in \N. $$
\end{lem}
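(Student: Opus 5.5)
Since simple integrable functions (finite linear combinations of characteristic functions of sets of finite Haar measure) are dense in $L^1(G,m)$, the plan is to approximate each $h_n$ by a nonnegative simple function, renormalize it to have integral one, and then check that the Reiter property survives the approximation.

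\textbf{Step 1 (approximation).} Fix $n$. Because $h_n \geq 0$ lies in $L^1(G,m)$, there is an increasing sequence of nonnegative simple functions, each supported on a set of finite measure, converging pointwise to $h_n$; for instance truncate $h_n$ to $\{h_n \leq k\}\cap K_k$ for an exhaustion of $G$ by compacts $K_k$ (available since $G$ is $\sigma$-compact), and discretize the values at mesh $2^{-k}$. By monotone convergence this sequence converges in $L^1$, so we may choose a simple $r_n \geq 0$ with $r_n \leq h_n$ and $\norm{h_n - r_n}_1 \leq \eps_n/2$. Then $\int r_n\,dm \geq 1-\eps_n/2 >0$, because $\eps_n<1$.

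\textbf{Step 2 (normalization).} Define $s_n = r_n / \int_G r_n\,dm$. This is still a nonnegative simple function with integral one. Moreover
$$\norm{s_n - r_n}_1 = \Big|1 - \int_G r_n\,dm\Big| = \norm{h_n-r_n}_1 \leq \eps_n/2,$$
where the middle equality uses $0\le r_n\le h_n$ and $\int_G h_n\,dm=1$. Hence $\norm{h_n - s_n}_1 \leq \eps_n$.

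\textbf{Step 3 (Reiter property).} For $g\in G$, right translation $\rho_g$ preserves the right Haar measure, so $\norm{\rho_g f}_1 = \norm{f}_1$. Writing $\rho_g s_n - s_n = \rho_g(s_n-h_n) + (\rho_g h_n - h_n) + (h_n - s_n)$, we obtain
$$\norm{\rho_g(s_n) - s_n}_1 \leq 2\eps_n + \norm{\rho_g(h_n)-h_n}_1 \longrightarrow 0,$$
because $\eps_n \to 0$ and $(h_n)$ is a right Reiter sequence. Therefore $(s_n)$ is a right Reiter sequence of simple functions, as required.

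The only point needing care is Step 1: the simple functions must have finite-measure support and satisfy $r_n \le h_n$, so that the normalization error is controlled exactly. The monotone truncation construction handles this directly, so no real obstacle is expected.
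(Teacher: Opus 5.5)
Your proof is correct and follows essentially the same route as the paper: approximate $h_n$ in $L^1$ by a nonnegative simple function, renormalize it to have integral one, and obtain the Reiter property from the triangle inequality together with right invariance of $m$. Choosing $0\le r_n\le h_n$ makes the renormalization error exactly $\norm{h_n-r_n}_1$, which is slightly cleaner than the paper's bound via $b_n\le 2$; the compact exhaustion is also unnecessary (and the lemma is stated for a general locally compact group), since the dyadic truncations are already supported in $\{h_n\ge 2^{-k}\}$, which has finite measure by Markov's inequality.
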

\begin{proof}
We  note that for each $ n \in \N$, there exists a sequence of  simple positive measurable function $(f_n)$ such that 
 $$ \norm{ h_n -f_n }_{L^1(G,m)} < \eps_n'$$
 where $ \eps_n' = \frac{\eps_n}{4}$.
Now let $ g \in G$, then we observe the following 
\begin{align*}
  \norm{ \rho_g(f_n) -f_n}_{ L^1(G, m )} &=  \norm{ \rho_g(f_n) -\rho_g(h_n) + \rho_g(h_n) - h_n + h_n -f_n}_{ L^1(G, m )} \\
  &= 2 \norm{ f_n -h_n}_{ L^1(G, m )}  + \norm{ \rho_g(h_n) -h_n}_{ L^1(G, m )} \\
  & \leq \eps_n'  + \norm{ \rho_g(h_n) -h_n}_{ L^1(G, m )} \xrightarrow{ n \to \infty } 0.
\end{align*}
Let $ a_n = \norm{ f_n}_{ L^1(G, m)}$. 
Then it follows that 
$$ a_n = \norm{ f_n}_{ L^1(G, m)} = \norm{ f_n-h_n}_{ L^1(G, m)}+ \norm{ h_n}_{ L^1(G, m)}\leq \eps_n' + 1.$$
On the other hand, we also have 
\begin{align*}
 &1= \norm{h_n}_{ L^1(G, m)} \leq \norm{h_n-f_n}_{ L^1(G, m)} + \norm{f_n}_{ L^1(G, m)} \leq a_n + \eps_n'\\
 \implies&  1-\eps_n' \leq a_n. 
\end{align*}
Thus, we have $ 1-\eps_n' \leq a_n \leq 1+\eps_n'.$ Hence,  $ a_n \to 1$ as $ n \to \infty.$ Set $ b_n = a_n^{ -1} \to 1 $ as $n \to \infty$. Now consider 
$$ s_n = b_n f_n.$$
Then note that $ (s_n) $ is Reiter sequence. Suppose there exists a $k \in \N$ such that $ \eps_n' \leq \frac{1}{k}$, for all $n \in \N$  and $ k > 1$ and observe that 
$ b_n \leq 2$. Consequently, we obtain the following
\begin{align*}
\norm{ s_n-h_n}_{ L^1(G, m )} &=\norm{ b_nf_n-h_n}_{ L^1(G, m )} \\
&=\norm{ b_nf_n -b_nh_n}_{ L^1(G, m )} + \abs{1-b_n}\norm{ h_n}_{ L^1(G, m)}\\
&\leq b_n \eps_n' + b_n \abs{ 1-a_n} \\
&\leq 2 b_n \eps_n' = 4\eps_n'= \eps_n. 
 \end{align*}
This completes the proof.

\end{proof}


\begin{rem}
     We note that Lemma \ref{finite-sum-aprox} holds for a left Reiter sequence under the left Haar measure. Using Theorem \ref{sum-maximal} and Lemma \ref{finite-sum-aprox}, we observe that it is enough to prove the weak type $(1, 1)$ maximal inequality for the maps $(A_n^h)$ associated to the Reiter sequence $(h_n)$ consisting of simple functions. 
\end{rem}

\medskip \medskip

\subsection{Approximation of Reiter Sequences by Convex Combinations of F\o lner Averages}
In this section, we establish that a Reiter sequence of simple functions can be approximated by a linear combination of a Reiter sequence of F\o lner sequences. Let $G$ be a l.c.s.c.  group with right Haar measure $m$ and $e$ be the identity element in $G$. We begin with a group-theoretic result related to the $U$-thickening of compact sets (see, e.g., \cite{colin-2023}). Although the technique is likely well known to specialists, we provide a full proof below to keep the presentation self-contained.

\begin{prop}\label{prop: int over good set}
	Let $K_0$ be a compact subset of $G$ containing $e$ and $U$ be a compact neighbourhood of $e$. Write $K=UK_0$. Then for any set measurable set $F \subseteq G$ with $ 0 <m(F) < \infty  $, define
	\begin{align*}
		\Phi :=\int_K  \frac{m(Fg\Delta F)}{m(F)}\,dm(g).
	\end{align*}
	If $\Phi\leq \eps$ and $\eps>0$, then 
	\begin{align*}
		\sup_{x\in K_0} \frac{m(Fx\Delta F)}{m(F)}\leq \frac{2 \eps}{m(U)}.
	\end{align*}
\end{prop}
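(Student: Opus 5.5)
The plan is to use a triangle inequality for the symmetric-difference pseudometric, and then average it over $U$ so that both resulting terms are dominated by the integral $\Phi$. Write $d(A,B) := m(A\Delta B)$ for measurable sets of finite measure. Then $d$ satisfies the triangle inequality, since $A\Delta C\subseteq (A\Delta B)\cup(B\Delta C)$. Because $m$ is a right Haar measure, $d$ is also right invariant: $d(Ag,Bg)=d(A,B)$, as $(A\Delta B)g = Ag\Delta Bg$. The function $g\mapsto d(Fg,F)$ is continuous (by continuity of right translation in $L^1(G,m)$ applied to $\chi_F$), so all integrals below make sense. We may assume $m(U)>0$, which holds since $U$ is a neighbourhood of $e$.

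Fix $x\in K_0$ and $u\in U$. Since $Fux=(Fu)x$, right invariance gives $d(Fx,Fux)=d(F,Fu)$. Inserting $Fux$ into the triangle inequality therefore yields
\begin{align*}
m(Fx\Delta F)\le m(Fu\Delta F)+m(Fux\Delta F)\qquad\text{for all } u\in U.
\end{align*}
Integrating this over $u\in U$ with respect to $m$ gives
\begin{align*}
m(U)\, m(Fx\Delta F)\le \int_U m(Fu\Delta F)\,dm(u)+\int_U m(Fux\Delta F)\,dm(u).
\end{align*}

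Next we bound each integral by $\Phi\, m(F)$. For the first, $e\in K_0$ gives $U=Ue\subseteq UK_0=K$. The integrand is nonnegative, so the integral is at most $\int_K m(Fg\Delta F)\,dm(g)=\Phi\, m(F)$. For the second, substitute $v=ux$. Right invariance of $m$ turns it into $\int_{Ux} m(Fv\Delta F)\,dm(v)$, and $Ux\subseteq UK_0=K$, so it is again at most $\Phi\,m(F)$. Combining the two bounds gives $m(U)\,m(Fx\Delta F)\le 2\Phi\, m(F)\le 2\eps\, m(F)$. Dividing by $m(U)\,m(F)$ and taking the supremum over $x\in K_0$ gives the claim.

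The main point requiring care is the choice of the intermediate set $Fux$. It must be chosen so that, after right translation, both pieces become translates of $F$ by elements of $K=UK_0$. This is also where the convention matters: the argument uses right translations and a right Haar measure, so no modular function appears in the change of variables $u\mapsto ux$.
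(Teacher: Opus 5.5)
Your proof is correct and is essentially the paper's argument. The paper applies the triangle inequality through the intermediate set $Fg$ and integrates over $g\in Ux$; this is your set $Fux$ reparametrized by $g=ux$, and it bounds the two resulting integrals by $\Phi\, m(F)$ using $U\subseteq K$ and $Ux\subseteq K$ exactly as you do.
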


\begin{proof}
	Fix $x\in K_0$. Then for every $g\in G$, we know
	\begin{align*}
		m(Fx\Delta F) \leq m(Fx\Delta Fg) + m(Fg\Delta F).
	\end{align*}
	Then integrating over $ Ux$, since $m(Ux)=m(U)$, we have
	\begin{align*}
		m(U)m(Fx\Delta F) \leq \int_{Ux} m(Fx\Delta Fg)\,dm(g) + \int_{Ux} m(Fg\Delta F)\,dm(g).
	\end{align*}
	To estimate the first integral, using the right invariance of Haar measure, we obtain
	\begin{align*}
		\int_{Ux} m(Fx\Delta Fg)\,dm(g) = \int_U m(F\Delta Fu)\,dm(u) \leq \int_{K} m(F\Delta Fg)\,dm(g).
	\end{align*}
	Here, we used $U\subseteq K$. For the second integral, since $Ux\subseteq K ,$ we have
	\begin{align*}
		\int_{Ux}m(Fg\Delta F)\,dm(g) \leq \int_{K}m(Fg\Delta F)\,dm(g).
	\end{align*}
	Now, combining the above inequalities, we obtain
	\[
	m(U)m(Fx\Delta F) \leq 2 \int_{K} m(Fg\Delta F)\,dm(g).
	\]
	Which implies
	\[
	\frac{m(Fx\Delta F)}{m(F)} \leq \frac{2}{m(U)} \int_{K} \frac{m(Fg\Delta F)}{m(F)} \,dm(g).
	\]
	The last integral is $\Phi$, so we have $\frac{m(Fx\Delta F)}{m(F)} \leq \frac{2\eps}{m(U)}.$ Since $x\in K_0$ was arbitrary, we have 
	\begin{align*}
		\sup_{x\in K_0} \frac{m(Fx\Delta F)}{m(F)}\leq \frac{2 \eps}{m(U)}.
	\end{align*}
\end{proof}
We present the following lemma for a right Reiter sequence but the result holds for a left Reiter sequence with a similar proof. Therefore, we present the details for the right case only.

\begin{lem}\label{lem: Reiter subsequence}
Let $(f_n)$ be a right Reiter sequence in $L^1(G, m)$, $(K_n)_{n=1}^\infty$ an increasing sequence of compact sets with $ \underset{n \in \N}{\cup} K_n= G$ , and $(\varepsilon_n)$ a sequence of positive reals.
Then there exists a subsequence $(h_k)$ of $ (f_n)$  such that
\[
\sup_{g \in K_k} \|\rho_g(h_k) - h_k\|_{ L^1(G, m)} < \varepsilon_k, ~~ \text{ for all } k \in \N.
\]
\end{lem}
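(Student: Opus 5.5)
The plan is to use the uniform-on-compacts form of the Reiter property from Definition \ref{Reiter-defn} and to pick indices greedily. Applied to the compact set $K_k$, that form says the following. For each fixed $k$,
\[
\lim_{n\to\infty}\sup_{g\in K_k}\|\rho_g(f_n)-f_n\|_{L^1(G,m)}=0 .
\]
Hence there is an index $N_k$ such that $\sup_{g\in K_k}\|\rho_g(f_n)-f_n\|_1<\varepsilon_k$ for all $n\ge N_k$. We then define $n_1:=N_1$, and recursively $n_k:=\max\{N_k,\,n_{k-1}+1\}$. This gives a strictly increasing sequence, and we set $h_k:=f_{n_k}$. Since $n_k\ge N_k$, we get $\sup_{g\in K_k}\|\rho_g(h_k)-h_k\|_1<\varepsilon_k$ for every $k$, which is the claim. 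Each $h_k$ is still nonnegative with integral one, so $(h_k)$ is again a right Reiter sequence.

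The only step needing real justification is the passage from pointwise convergence ($\|\rho_g(f_n)-f_n\|_1\to0$ for each $g$) to convergence uniform on compact sets; the paper asserts this equivalence by analogy with \cite{Paterson1988}. If a proof is required, I would argue as follows.

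First, the map $g\mapsto\rho_g(\varphi)$ from $G$ to $L^1(G,m)$ is continuous for each $\varphi\in L^1$. Next, pointwise convergence on a set of positive measure propagates through the inequality
\[
\|\rho_{gh}f-f\|_1\le\|\rho_g f-f\|_1+\|\rho_h f-f\|_1 ,
\]
which holds because $\rho_g$ is an isometry for right Haar measure. Finally, I would convolve with a normalized indicator of a small neighbourhood $V$ of $e$, i.e.\ average $\rho_g f_n$ over $g\in V$. By dominated convergence the averaged defects tend to zero, and compactness of $K_k$ then gives uniformity. The bound $\|\rho_g f_n-f_n\|_1\le 2$ supplies the needed domination.

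This uniformity step is the main obstacle. It is the usual Reiter analogue of the standard F\o lner argument, and an averaging estimate in the spirit of Proposition \ref{prop: int over good set} handles it. Once uniformity holds, the subsequence extraction above is routine.
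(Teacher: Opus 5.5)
Your proposal is correct and follows the paper's proof. Like the paper, you invoke the uniform-on-compacts form of the Reiter condition from Definition~\ref{Reiter-defn}, inductively pick strictly increasing indices $N_k$ with $\sup_{g\in K_k}\|\rho_g(f_n)-f_n\|_1<\varepsilon_k$ for $n\ge N_k$, and set $h_k=f_{N_k}$. Your extra sketch of why pointwise convergence gives uniform convergence on compacta (averaging the triangle inequality over a compact neighbourhood and applying dominated convergence) is sound, and it fills in a step the paper only asserts.
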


\begin{proof}
By the definition of a Reiter sequence, for any compact subset $K \subseteq G$, the sequence of translation differences converges to zero uniformly on $K$. That is, for any $\varepsilon > 0$, there exists an $N_\varepsilon \in \mathbb{N}$ such that 
\[
\sup_{g \in K} \|\rho_g(f_n) - f_n\|_{ L^1(G, m)} < \varepsilon, ~\text{for all } n \ge N_\varepsilon.
\]
We are given an increasing sequence of compact subsets  $(K_n)$ of $G$ such that $ \cup K_n= G$. We construct the desired subsequence by induction.

\medskip
\noindent
For $k = 1$, corresponding to the compact set $K_1$ and  $\varepsilon_1 >0$, there exists an integer $N_1 \in \mathbb{N}$ such that
\[
\sup_{g \in K_1} \|\rho_g(f_n) - f_n\|_{ L^1(G, m)} < \varepsilon_1 \quad \text{for all } n \ge N_1.
\]
For $k = 2$, corresponding to the compact set $K_2$ and  $\varepsilon_2 > 0$, there exists an integer $N_2 > N_1$ such that
\[
\sup_{g \in K_2} \|\rho_g(f_n) - f_n\|_{ L^1(G, m)} < \varepsilon_2 \quad \text{for all } n \ge N_2.
\]
Proceeding inductively, for any $k \in \mathbb{N}$, we can find an integer $N_k \in \mathbb{N}$ strictly greater than $N_{k-1}$ such that
\[
\sup_{g \in K_k} \|\rho_g(f_n) - f_n\|_{ L^1(G, m)} < \varepsilon_k \quad \text{for all } n \ge N_k.
\]
Let us define a subsequence $(h_k)_{k=1}^\infty$ by setting $h_k = f_{N_k}$ for all $k \in \mathbb{N}$. Therefore, it follows directly from our construction that
\[
\sup_{g \in K_k} \|\rho_g(h_k) - h_k\|_{ L^1(G, m)} < \varepsilon_k \quad \text{for all } k \in \mathbb{N}.
\]
This completes the proof.
\end{proof}

Therefore, with the help of the preceding results, we may assume that 
  $(s_n)$ is a right Reiter sequence of simple functions in $L^1(G, m)$, after passing to a subsequence, we may  further assume that
\[
\sup_{g\in K_n}
\|\rho_g(s_n)-s_n\|_1
<
\varepsilon_n, ~~\text{ for all } g \in G \text{ and }
\sum_{n=1}^\infty \varepsilon_n < \infty .
\]
The following is inspired by the article I. Naomika (see \cite{Namioka-1964}) to prove the existence  F\o lner sequence, we use the layer cake decomposition tricks.

\begin{lem}\label{finite-simple}
Let $h \in L^1(G, m)$ be a positive simple function with $\|h\|_1=1$, and  $ A \in \CB_{ (0, \infty) }$. Then the integral representation over $A$ reduces to a finite sum:
\[
h=     \int_A \frac{1_{F_t}}{m(F_t)} \, dt + \int_{A^c} \frac{1_{F_t}}{m(F_t)} \, d\mu(t) = \sum_{j=1}^k a_j\frac{ 1_{F_j}}{m(F_j)} + \int_{A^c} \frac{1_{F_t}}{m(F_t)} \, d\mu(t),
\]
where $F_t := \{x \in G : h(x) > t\}$, $ E = \{ t>0 : m(F_t) > 0$\},$~\mu$ is a probability measure on $(0,\infty)$ and $\{a_j\}_{j=1}^k \subseteq [0, 1] $ with $ \displaystyle\sum_{ k =1}^n  a_k = \mu(A)$.
\end{lem}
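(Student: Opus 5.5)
The plan is to start from the layer-cake formula and then use that $h$ takes only finitely many values. Write $h=\sum_{i=1}^r c_i 1_{B_i}$ with $0<c_1<\dots<c_r$ and the $B_i$ disjoint, measurable, of positive finite measure. For every $x\in G$ the layer-cake identity gives $h(x)=\int_0^\infty 1_{F_t}(x)\,dt$. Since $F_t$ is empty for $t\ge c_r$, only $t\in(0,c_r)$ contributes, and there $m(F_t)>0$; hence $E=(0,c_r)$. Next I will define the measure $d\mu(t)=m(F_t)\,1_E(t)\,dt$, so that formally
\[
h=\int_E \frac{1_{F_t}}{m(F_t)}\,d\mu(t).
\]
This identity holds pointwise and in $L^1(G,m)$ as a Bochner integral. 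To see that $\mu$ is a probability measure, I will use Fubini/Tonelli:
\[
\mu((0,\infty))=\int_0^\infty m(F_t)\,dt=\int_G h\,dm=1 .
\]

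I read the displayed formula in the statement as splitting this integral over $A$ and over $A^c$, with $d\mu$ understood in both pieces; the ``$dt$'' in the first term is harmless after the substitution above. Splitting the integral is then immediate, since $(0,\infty)=A\sqcup A^c$.

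The key step is that $t\mapsto F_t$ is piecewise constant. On each interval $I_j=[c_{j-1},c_j)$ (with $c_0=0$), we have $F_t=\bigcup_{i\ge j}B_i=:F_j$, a fixed set of positive measure. Therefore
\[
\int_{A}\frac{1_{F_t}}{m(F_t)}\,d\mu(t)=\sum_{j=1}^{r}\mu(A\cap I_j)\,\frac{1_{F_j}}{m(F_j)} .
\]
I then set $k=r$ and $a_j=\mu(A\cap I_j)=m(F_j)\,|A\cap I_j|$, where $|\cdot|$ is Lebesgue measure. Each $a_j$ lies in $[0,1]$ because $\mu$ is a probability measure. The $a_j$ sum to $\mu(A\cap E)=\mu(A)$, since $\mu$ lives on $E$ (the upper index $n$ in the statement's sum should read $k$).

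The main obstacle is only bookkeeping. One point is to justify the vector-valued integral, which I will settle by checking the identity pointwise together with Tonelli. The other is to handle the endpoints of the $I_j$, which have $\mu$-measure zero and so do not affect the sum. No deeper difficulty is expected.
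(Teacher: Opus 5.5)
Your proposal is correct and follows the paper's own argument. Both use the layer-cake formula, define $d\mu = 1_E(t)\,m(F_t)\,dt$ and show by Fubini that it is a probability measure, and use that $t\mapsto F_t$ takes finitely many values, so that $a_j=\mu(A\cap I_j)$ coincides with the paper's $\mu(E_j)$. Your restriction to $E=(0,c_r)$, which discards the levels $t\ge c_r$ in $A$ where $F_t=\emptyset$, and your reading of the $dt$ and the index $n$ as typos are slightly more careful than the paper's write-up.
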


\begin{proof}
As $F_t := \{x \in G : h(x) > t\}$, we note that 
$$ h = \int_0^\infty 1_{ F_t} ~d t.$$
Since  $h$ is a simple function, it assumes only a finite number of distinct positive values on $G$, which we order as $0 < c_1 < c_2 < \dots < c_k$. 
The level set $F_t$ only changes when $t$ crosses one of the values $c_j$. For $t$ between any two adjacent values, the set $F_t$ remains entirely constant. Consequently, the mapping $t \mapsto F_t$ produces only finitely many distinct sets, which we index as $F_1, F_2, \dots, F_k$.
We partition the set $A$ into equivalence classes where the level sets are constant. Define:
\[
    E_j := \{t \in A : F_t = F_j\}.
\]
Since every $t \in (0, \infty)$ corresponds to exactly one set $F_j$, the collection $\{E_j\}_{j=1}^k$ consists of mutually disjoint subsets that fully partition $A$. 
Suppose $ E = \{ t > 0 :  m(F_t) > 0 \}$.
Then using the partition, we decompose the integral over $A$ into a finite sum of integrals:
\begin{align*}
\int_A 1_{ F_t} dt &= \int_A \frac{1_{F_t}}{m(F_t)}  m(F_t) 1_E(t)dt\\
&=  \int_A \frac{1_{F_t}}{m(F_t)} \, d\mu(t), ~~ \text{ where } 1_E(t) m(F_t) dt = d\mu(t) \\
&= \sum_{j=1}^k \int_{E_j} \frac{1_{F_t}}{m(F_t)} \, d\mu(t).
\end{align*}
Now, using Fubini's theorem, we have
\begin{align*}
\mu(0,\infty)=\int_0^\infty m(F_t)dt=\int_0^\infty\int_G 1_{F_t}(x)dm(x)dt
=\int_Gh(x)dm(x)=1.
\end{align*}
For any fixed $j$, the level set $F_t = F_j$ is strictly constant for all $t \in E_j$. This allows us to factor the constant integrand out of each integral:
\begin{align*}
    \sum_{j=1}^k \int_{E_j} \frac{1_{F_t}}{m(F_t)} \, d\mu(t) &= \sum_{j=1}^k \frac{1_{F_j}}{m(F_j)} \int_{E_j} 1 \, d\mu(t) \\
    &= \sum_{j=1}^k \mu(E_j) \frac{1_{F_j}}{m(F_j)}.
\end{align*}
Take $ a_j = \mu(E_j)$, which satisfies the required conditions.
\end{proof}


We now prove our main theorem,  using Namioka's (see \cite{Namioka-1964}) layer-cake argument to extract approximately invariant sets from a sequence of $L^1$-functions satisfying Reiter's conditions. Indeed, by representing a non-negative function as an integral of its level sets and averaging, we isolate a family of sets with small relative translation error. Finally, we combine this with a compact-neighborhood thickening to ensure uniform invariance over the compact sets $K_n$.

In the following we assume that $G$ is a locally compact second countable, hence $\sigma$-compact group equipped with a right invariant Haar measure $m$ and we denote $$\mathcal P^1(G):=\left\{f\in L^1(G,m):f\geq 0,\ \norm{f}_1=1 \right\}.$$ Furthermore, let $(s_n)$ be a right Reiter sequence of simple functions in $L^1(G,m)$ and $(K_n)_n$ be an increasing sequence of compact subsets containing identity $e$ such that $m(K_n)>0,$ and $\displaystyle\cup_{n=1}^{\infty}K_n=G.$ Let $(\eps_n)$ be a positive sequence such that
\[
\eps_n\downarrow 0,
\qquad
\sum_{n=1}^\infty \sqrt{\eps_n}<\infty.
\]

The aim is to extract a subsequence $(h_n)$ of $(s_n)$ which is represented, up to a small $L^1$-error, as a finite convex combination of normalized
characteristic functions of sets which are almost invariant under $K_n$. 

The following is the main result of this section.

\begin{thm}\label{main-thm}
   There exists a subsequence $(h_n)$ of $(s_n)$ such that   for each $n \in \N$, there exists a  $k_n\in \N$,  positive real numbers $ \{ a_{n,j} > 0:   1\leq j \leq k_n \}  $,  a function $O_n$  and collections of measurable sets  $\{ F_{n, j} : 1\leq j\leq k_n \} $ satisfying the following:
 \begin{enumerate}
     \item $ \displaystyle \sum_{ j = 1}^{k_n} a_{n, j} =a_n \leq 1$ for all $n \in \N$ with $ a_n \xrightarrow{ n \rightarrow \infty } 1$ and $ \norm{ O_n}_{ L^1(G, m) } < \sqrt{\eps_n}, ~ $ for all $ n \in \N$.
     
\item  $ h_n = \displaystyle \sum_{j=1}^{k_n} a_{n,j} \frac{\mathbf{1}_{F_{n,j}}}{m(F_{n,j})} + O_n.$\\

\item $ \underset{ g \in K_n }{\sup }  \frac{ m(F_{n, j}g \Delta F_{n, j}) }{m(F_{n, j})} \lesssim \sqrt{\eps_n}, \text{ for all } n \in \N \text{ and } 1 \leq j\leq k_n.$

 \end{enumerate}
\end{thm}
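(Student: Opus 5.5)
Our plan is to follow Namioka's layer-cake argument, combined with the thickening estimate of Proposition \ref{prop: int over good set}. First we choose the subsequence. Fix a compact neighbourhood $U$ of $e$ with $m(U)\ge 1$ (or simply track $m(U)$ as a constant), and put $\tilde K_n = UK_n$. By Lemma \ref{lem: Reiter subsequence}, applied to $(s_n)$ with the compact sets $\tilde K_n$ and tolerances $\eps_n/ m(\tilde K_n)$, we extract a subsequence $(h_n)$ with
\[
\sup_{g\in \tilde K_n}\|\rho_g(h_n)-h_n\|_1 < \frac{\eps_n}{m(\tilde K_n)} .
\]
Integrating over $\tilde K_n$ then gives $\int_{\tilde K_n}\|\rho_g(h_n)-h_n\|_1\,dm(g)<\eps_n$.

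The key identity is the layer-cake formula for the translation error. Write $F_t=\{h_n>t\}$ and $d\mu_n(t)=m(F_t)1_E(t)\,dt$, as in Lemma \ref{finite-simple}. Since $\{\rho_g h_n>t\}=F_tg^{-1}$ and $|1_{F_tg^{-1}}-1_{F_t}|$ integrates in $t$ to $|\rho_gh_n-h_n|$ pointwise, we get
\[
\|\rho_g(h_n)-h_n\|_1=\int_0^\infty m(F_tg^{-1}\Delta F_t)\,dt=\int \frac{m(F_tg^{-1}\Delta F_t)}{m(F_t)}\,d\mu_n(t).
\]
We then integrate over $g\in\tilde K_n$ and apply Fubini. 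Replacing $g^{-1}$ by $g$ is harmless after enlarging $\tilde K_n$ to be symmetric, or after carrying the modular function as a constant. This yields $\int \Phi_n(t)\,d\mu_n(t)<\eps_n$, where
\[
\Phi_n(t)=\int_{\tilde K_n}\frac{m(F_tg\Delta F_t)}{m(F_t)}\,dm(g).
\]

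Next comes the Chebyshev step. Let $B_n=\{t:\Phi_n(t)>\sqrt{\eps_n}\}$. Then $\mu_n(B_n)\le \sqrt{\eps_n}$. Apply Lemma \ref{finite-simple} with $A=B_n^c$. This gives
\[
h_n=\sum_{j=1}^{k_n}a_{n,j}\frac{1_{F_{n,j}}}{m(F_{n,j})}+O_n,\qquad O_n:=\int_{B_n}\frac{1_{F_t}}{m(F_t)}\,d\mu_n(t).
\]
Here $O_n\ge0$ and $\|O_n\|_1=\mu_n(B_n)\le\sqrt{\eps_n}$. To get a strict inequality, we use threshold $\sqrt{\eps_n}/2$ instead. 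We discard the indices with $a_{n,j}=0$, so that all coefficients are positive. Then $a_n=\sum_j a_{n,j}=1-\mu_n(B_n)\in[1-\sqrt{\eps_n},1]$, so $a_n\to1$. This gives (1) and (2).

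For (3), each $F_{n,j}$ equals $F_t$ for some $t\notin B_n$, so $\Phi_n(t)\le\sqrt{\eps_n}$. Proposition \ref{prop: int over good set}, with $K_0=K_n$, $K=UK_n$ and $F=F_{n,j}$, then gives
\[
\sup_{g\in K_n}\frac{m(F_{n,j}g\Delta F_{n,j})}{m(F_{n,j})}\le \frac{2\sqrt{\eps_n}}{m(U)} .
\]
Since $U$ is fixed, this is $\lesssim\sqrt{\eps_n}$.

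The main obstacle is the layer-cake translation identity together with the measure-theoretic bookkeeping. One needs joint measurability of $(t,g)\mapsto m(F_tg\Delta F_t)$ to justify Fubini; this is fine because $h_n$ is simple, so only finitely many level sets occur. One must also reconcile $\rho_g$ (translation by $g$ inside the argument) with the set translates $F_tg$ in Proposition \ref{prop: int over good set}, which we handle by symmetrizing the compact sets. A last point is that the $a_{n,j}$ sum over level sets $F_j$, which may coincide for distinct $t$-intervals; Lemma \ref{finite-simple} already groups these.
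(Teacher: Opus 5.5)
Your proposal is correct and follows the paper's proof step by step: the $U$-thickening $\widetilde K_n=UK_n$, a Reiter subsequence with tolerance $\eps_n/m(\widetilde K_n)$, the layer-cake identity for the translation error, Chebyshev with threshold $\sqrt{\eps_n}$, Lemma \ref{finite-simple} applied on $B_n^c$, and Proposition \ref{prop: int over good set} for (3). The paper avoids your $\rho_g$ versus $\rho_{g^{-1}}$ bookkeeping by imposing the Reiter bound directly on $\rho_{g^{-1}}(h_n)$ for $g\in\widetilde K_n$ (that is, on the compact set $\widetilde K_n^{-1}$), which does the same job as your symmetrization. One small slip: lowering the threshold to $\sqrt{\eps_n}/2$ would only give $\mu_n(B_n)<2\sqrt{\eps_n}$. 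No change is needed, though, because your strict bound $\int\Phi_n\,d\mu_n<\eps_n$ already gives $\mu_n(B_n)<\sqrt{\eps_n}$ via Chebyshev.
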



     



\begin{proof}
 First, we fix a compact neighborhood $U$ containing the identity $e$ of $G$ such that $m(U)>0$. Then consider  the $U$-thickening of $K_n$ by
\[
\widetilde K_n:=UK_n.
\]
Since $K_n$ and $U$ are compact, $\widetilde K_n$ is compact, and $m( \widetilde K_n) >0$. 
By Lemma \ref{lem: Reiter subsequence}, we can find a subsequence $ (h_n)$ of $ (s_n)$ 
such that the translation error over these thickened sets satisfies the following; 
\begin{align}\label{error-control}
\sup_{g \in \widetilde{K}_n} \|\rho_{g^{-1}} (h_n) - h_n\|_{L^1(G,m)} \leq \frac{\eps_n}{m( \widetilde{K}_n)}.
\end{align}

\medskip
\noindent
For any $n \in \N$ and $t > 0$, consider the level sets
\[
    F_{n,t} := \{x \in G : h_n(x) > t\}.
\]
By Fubini's theorem, we have the following layer-cake formulation; 
$$
h_n(x) = \int_0^\infty 1_{F_{n,t}}(x) \, dt.
$$
Since $h_n$ is normalized i.e. $\int_G h_n \, dm = 1$ for all $n \in \mathbb{N}$, integrating over $G$ yields
\[
    \int_0^\infty m(F_{n,t}) \, dt = \int_G h_n(x) \, dm(x) = 1.
\]
We can therefore define a probability measure $\mu_n: \CB_{(0, \infty)} \to [0 ,1]$, where $ \CB_{(0, \infty)}$ is the Borel $\sigma$-algebra of $(0, \infty)$, by $d\mu_n(t) = 1_{ E_n}(t) m(F_{n,t}) \, dt$, where  $E_n = \{t > 0 : m(F_{n,t}) > 0\}$. Then  we can express $h_n$ as an integral convex combination of normalized characteristic functions:
\[
    h_n (x)= \int_0^\infty \frac{1_{F_{n,t}}(x)}{m(F_{n,t})} \, d\mu_n(t), \quad a.e.~~x\in G.
\]

Now, observing that $\rho_{g^{-1}}(h_n)(x)-h_n(x)=\int_0^\infty\big(1_{F_{n,t}g}(x)-1_{F_{n,t}}(x)\big)d\mu_n(t)$, using Fubini's theorem,
\begin{align*}
\norm{\rho_{g^{-1}}(h_n)-h_n}_1
&= \int_G \abs{\rho_{g^{-1}}(h_n)(x)-h_n(x)} \, dm(x) \\
&= \int_0^\infty \int_G \abs{1_{F_{n,t}g}(x)-1_{F_{n,t}}(x)} \, dm(x) \, dt \\
&= \int_0^\infty m(F_{n,t}g \mathbin{\Delta} F_{n,t}) \, dt \\
&= \int_0^\infty \frac{m(F_{n,t}g \mathbin{\Delta} F_{n,t})}{m(F_{n,t})} \, d\mu_n(t).
\end{align*}

Integrate this over $g \in \widetilde{K}_n$ with respect to the Haar measure $m$. By Fubini's Theorem, we can swap the order of integration between the Haar measure $m$ and the layer-cake probability measure $\mu_n$. Indeed, we obtain the following 
\begin{align*}
 \int_0^\infty \left( \int_{\widetilde{K}_n} \frac{m(F_{n,t}g \Delta F_{n,t})}{m(F_{n,t})} dm(g) \right) d\mu_n(t) 
 &=  \int_{\widetilde{K}_n}  \left(  \int_0^\infty \frac{m(F_{n,t}g \Delta F_{n,t})}{m(F_{n,t})} dm(g) \right) d\mu_n(t) \\
 &=    \int_{\widetilde{K}_n}  \left( \|\rho_{g^{-1}} (h_n) - h_n\|_{L^1(G,m)} \right)  dm(g)  \\
 & \leq  \frac{\eps_n}{m( \widetilde{K}_n)}  \cdot m( \widetilde{K}_n)= \eps_n    ~(\text{by Eq.} \ref{error-control}).
\end{align*}
We need to isolate the ``good'' levels. 
Let $\Phi_n(t)$ be the inner integral representing the average symmetric difference for a given level set $F_{n,t}$:
\begin{align}\label{phi-fun}
\Phi_n(t) = \int_{\widetilde{K}_n} \frac{m(F_{n,t}g \Delta F_{n,t})}{m(F_{n,t})} dm(g).
\end{align}
Choose a threshold $\delta_n = \sqrt{ \eps_n}$. Now we consider the ``bad'' levels where the average error is too large:
$$B_n = \{t > 0 : \Phi_n(t) > \delta_n\}.$$
By Chebyshev's inequality,  the measure of the bad levels is asymptotically vanishing and good levels converge to $1$; indeed, we note that 
\begin{align}\label{bad-set-control}
\mu_n(B_n) \leq \frac{\eps_n}{\delta_n} = \sqrt{ \eps_n} \xrightarrow{n \to \infty} 0~ ~\text{ and }  ~~\mu_n(B_n^c) \xrightarrow{n \to \infty} 1.
\end{align}

\noindent Then we  apply Proposition \ref{prop: int over good set} to obtain the following;
\begin{align}\label{Folner-proved}
\sup_{x \in K_n} \frac{m(F_{n,t}x \Delta F_{n,t})}{m(F_{n,t})} \leq \frac{2 \delta_n}{m(U)} \lesssim \sqrt{\eps_n}, ~~\text{ for all } t \in B_n^c.
\end{align}
This implies that the level sets $F_{n,t}$ for $t \in B_n^c$ are asymptotically $K_n$-invariant, meaning they form a sequence of F\o lner sets as $n \to \infty$.

\medskip
Finally, we decompose the integral representation of $h_n$ over $B_n^c$ and $B_n$:
\[
    h_n = \int_{B_n^c} \frac{1_{F_{n,t}}}{m(F_{n,t})} \, d\mu_n(t) + \int_{B_n} \frac{1_{F_{n,t}}}{m(F_{n,t})} \, d\mu_n(t).
\]
Suppose $ O_n(g)  =  \int_{B_n} \frac{1_{F_{n,t} } (g)}{m(F_{n,t})} \, d\mu_n(t)$. Note that $ O_n \in L^1(G, m ) $, indeed, we observe that 
$ \norm{O_n}_{ L^1(G, m)}  \leq \mu_n(B_n)  \leq \sqrt{ \eps_n} $ (see Eq.\ref{bad-set-control}).
Moreover, using the Lemma \ref{finite-simple}, we can find a $k_n $ and a set of positive elements $ \{ a_{n, j} : ~ 1 \leq j \leq k_n \} $ such that 
\[
    h_n =  \sum_{ j =1}^{k_n} a_{n, j}  \frac{1_{F_{n,j}}}{m(F_{n,j})}  + O_n.
\]
and $\displaystyle \sum_{j = 1}^{k_n} a_{ n, j}= \mu_n(B_n^c)  \leq 1.$ Furthermore, let $ a_n = \displaystyle \sum_{j = 1}^{k_n} a_{ n, j} $ and it follows that $  a_n \leq 1 $ and $ a_n \xrightarrow{ n \rightarrow \infty } 1$. This completes the proof. 
\end{proof}

\begin{rem}\label{rem: Folner cond}

    Since the group $G$ is assumed to be locally compact second countable, it is $\sigma$-compact and hence it is known that if for some $t\in B_n^c$, and $\eps>0$, there is $n \in \N$, such that $\frac{m(F_{n,t}g \Delta F_{n,t})}{m(F_{n,t})} \leq \eps$, for all $g \in K$, ($K \subseteq G$ compact), then it is equivalent to the condition that for that particular $t>0$, and for any $\eps>0$ and any compact set $K \subset G$, there is $n \in \N$ such that $\frac{m(F_{n,t}K \Delta F_{n,t})}{m(F_{n,t})} \leq \eps$, which will now imply the definition of F\o lner set considered in \cite{cadilhac2025noncommutativemaximalergodicinequalities}.

\end{rem}

\begin{rem}
We like to highlight that when $G$ is a countable discrete group, then the proof of Theorem \ref{main-thm} is a little less involved. Note that since discrete groups are unimodular, the left and right Haar measures coincide. 
Indeed, establishing the following 
$$
\sup_{g \in K_n} \frac{m(F_{n,t}g \mathbin{\Delta} F_{n,t})}{m(F_{n,t})} \lesssim \sqrt{ \eps_n},
$$
is easier than the continuous case.  But other techniques are similar. Indeed, 
let
 $G$ be a countable discrete group with the counting measure $m$. Suppose 
$(K_n)$ is a increasing sequence of finite  sets in $G$ with  $ \cup_{ n \in \N} K_n = G$.
Then it is enough  to find a subsequence $ (h_n)$ of $ (s_n)$ 
such that the translation errors over the finite sets $K_n$ satisfy the following; 
\begin{align}\label{error-control-1}
\sup_{g \in {K}_n} \|\rho_{g^{-1}} (h_n) - h_n\|_{L^1(G,m)} \leq \frac{\eps_n}{m( {K}_n)}.
\end{align}
Then we take the function $\Phi_n $ (see the Eq.\ref{phi-fun}) as the following 
\[
    \Phi_n(t) := \sup_{g \in K_n} \frac{m(F_{n,t}g \mathbin{\Delta} F_{n,t})}{m(F_{n,t})}.
\]
Now we observe the following; 
\begin{align*}
  \int_0^\infty \Phi_n(t) d\mu_n &= \int_0^\infty \sup_{g \in K_n} \frac{m(F_{n,t}g \mathbin{\Delta} F_{n,t})}{m(F_{n,t})} d\mu_n \\
  &\leq   \sum_{ g \in K_n} \int_0^\infty  \frac{m(F_{n,t}g \mathbin{\Delta} F_{n,t})}{m(F_{n,t})} d\mu_n \\
  &=  \sum_{ g \in K_n}\|\rho_{g^{-1}} (h_n) - h_n\|_{L^1(G,m)}   \leq  m( K_n) \frac{\varepsilon_n}{m(K_n)}= \eps_n    ~ (\text{see Eq.} \ref{error-control-1}).
\end{align*}
Thus, this establishes the fact that  $\int_0^\infty \Phi_n(t) \, d\mu_n(t) < \varepsilon_n$.
The rest is similar to what we did in Theorem \ref{main-thm}. Indeed, 
take $\delta_n = \sqrt{\varepsilon_n}$) as in . Define the set of ``bad'' levels by
\[
    B_n := \{t > 0 : \Phi_n(t) > \delta_n\}.
\]
Then deduce that the measure of the bad set vanishes asymptotically:
\[
    \mu_n(B_n) \leq \frac{1}{\delta_n} \int_0^\infty \Phi_n(t) \, d\mu_n(t) \leq \frac{\varepsilon_n}{\delta_n} \to 0.
\]
For the ``good'' levels $B_n^c$, we have $\mu_n(B_n^c) \to 1$. Crucially, for every $t \in B_n^c$, we are guaranteed that
\begin{align*}
\sup_{g \in K_n} \frac{m(F_{n,t}g \mathbin{\Delta} F_{n,t})}{m(F_{n,t})} \leq \delta_n.
\end{align*}
To overcome this difficulty in the continuous setting, we employ the technique of $U$-thickening the given compact sets $(K_n)$ and then proceed with the subsequent analysis (see Proposition \ref{prop: int over good set}).

 Although this represents a necessary topological innovation relative to Namioka's original layer-cake argument \cite{Namioka-1964}, which was inherently restricted to discrete groups and finite sets, the thickening trick is deeply rooted in the study of continuous groups (see \cite{Greenleaf1969} \cite{Emerson1967}). 
 This averaging mechanism has long served as the standard bridge between discrete and continuous amenability. By thickening the target sets with a compact neighbourhood of positive Haar measure, we are able to smooth the $L^1$-data, thereby bypassing the measure-theoretic anomalies of uncountably infinite spaces to achieve uniform F\o lner-type invariance.

\end{rem}

\section{Maximal inequality for the averages along Reiter sequence  }
Recall that  $ \CM$ is a von Neumann algebra with a f.n.s trace $\tau$ and $G$ is a l.c.s.c group with right Haar measure $m$. Let $(\CM, G, \alpha) $ be a  $\tau$-preserving non-commutative dynamical system.   Let $ h= (h_n)$ be a Reiter sequence in $L^1(G, m)$. Recall the following averages 
$$ A_n^h(x) = \int_G \alpha_g(x) h_n(g) dm, ~~\text{ for } x \in L^p(M, \tau). $$
We like to prove a weak type $(1, 1)$ maximal inequality for an appropriate subsequence of $(A_n^h) $.
In view of Theorem \ref{sum-maximal} and   Theorem  \ref{main-thm}, it is enough to prove a weak type $(1, 1)$ maximal inequality for a subsequence of  $(A_n^s) $ associated to a Reiter sequence $s=(s_n)  $ with the following form;
\begin{align}
    s_n = \displaystyle \sum_{j=1}^{k_n} a_{n,j} \frac{\mathbf{1}_{F_{n,j}}}{m(F_{n,j})} \text{ and }  \underset{ g \in K_n }{\sup }  \frac{ m(F_{n, j}g \Delta F_{n, j}) }{m(F_{n, j})} \lesssim \sqrt{\eps_n}, ~\text{ for all }  1 \leq j\leq k_n,
\end{align} 
where $ (K_n)$ is a given sequence of compact subsets of $G$ containing the identity with $ \cup_{n\in \N}K_n = G$, $(\eps_n) \subseteq (0, 1)$ with $ \displaystyle\sum_n \sqrt{ \eps_n} < \infty$ and $ k_n \in \N$ for all $n \in \N$ and $ \{ a_{n, j} :   1 \leq j \leq k_n \} \subseteq [0, 1]$ with  $ \displaystyle \sum_{ j = 1}^{k_n} a_{n, j}  \leq 1$.

\medskip
\noindent
Now for each $ n \in \N $ and for $ 1 \leq j \leq k_n $, we  define the average ${A}_{n,j}^s=\frac{1}{m(F_{n,j})}\displaystyle\int_{F_{n,j}}\alpha_g(x)dm(g).$   Therefore,   we can write the average 
$$
A_n^s(x)=\sum_{j=1}^{k_n} a_{n,j} {A}^s_{n,j}(x),  \text{ for each } n\in \N.
$$
We seek to extend the finite sequence $(a_{n,j})_{j=1}^{k_n}$ and its associated F\o lner sets into infinite sequences over $j$, while ensuring that they decrease with respect to $n$ and preserve the sum $A^s_{n}(x)$.
 Since removing the average ${A}^s_{n,j}(x)$ whose corresponding coefficient $a_{n,j}$ is zero does not affect the original average $A^s_n(x)$, we may assume $a_{n,j}> 0$ for all $j,n \in \N.$ Now we will proceed as follows:


\begin{lem}\label{decomposition_of_sequence}
    Let $(s_n)$ be a sequence of non-negative simple functions defined by 
    \[
    s_n = \sum_{j=1}^{k_n} a_{n,j} \frac{\mathbf{1}_{F_{n,j}}}{m(F_{n,j})}. 
    \]
   Suppose $ \sum_{j=1}^{k_n}a_{n,j}=a_n$, then $s_n$ can be rewritten as an infinite sum 
    \[
    s_n = \sum_{j=1}^{\infty} a'_{n,j} \frac{\mathbf{1}_{F_{n,j}'}}{m(F_{n,j}')},
    \]
    such that the following properties hold:
    \begin{enumerate}
    \item $ F_{n, j}'\in  \{ F_{n, 1 }, F_{n, 2 }, \cdots, F_{n, k_n }\} $ for all $j \in \N$.
        \item $a'_{n,j} > 0$ for all $j \ge 1$, $\displaystyle\sum_{j=1}^{\infty}a'_{n,j}=a_n$ and $ \displaystyle \sum_{ j=1}^\infty  \sqrt{ a_{n, j}'} < \infty  $.
        \item For each fixed $j$, the sequence $(a'_{n,j})_{n=1}^{\infty}$ decreases in $n$.
    \end{enumerate}
\end{lem}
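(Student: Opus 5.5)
We build the infinite expansion by splitting each coefficient $a_{n,j}$ into infinitely many pieces and repeating the corresponding set. The key observation is that a term $a\,\mathbf{1}_F/m(F)$ can be written as $\sum_i b_i\,\mathbf{1}_F/m(F)$ with $b_i>0$ and $\sum_i b_i=a$. This rewriting leaves $s_n$ unchanged. Property (1) is then automatic, since every $F'_{n,j}$ is one of $F_{n,1},\dots,F_{n,k_n}$, and $\sum_j a'_{n,j}=a_n$ holds by construction. So the work is in choosing the pieces so that (2) and (3) hold.

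First, fix a universal summable profile: let $c_j=C\,2^{-j}$, or any positive sequence with $\sum_j\sqrt{c_j}<\infty$. The plan is to make $a'_{n,j}$ close to $c_j$ scaled appropriately, uniformly in $n$. For each fixed $n$ we take the intervals $I_{n,l}=[\sum_{i<l}a_{n,i},\sum_{i\le l}a_{n,i})$, $l=1,\dots,k_n$, which partition $[0,a_n)$. We also take the dyadic-type partition of $[0,a_n)$ into consecutive intervals $J_{n,j}$ of lengths $a_n2^{-j}$. Refining the two partitions gives pieces $I_{n,l}\cap J_{n,j}$, and each piece inherits the set $F_{n,l}$. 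Only finitely many $j$ meet more than one $I_{n,l}$. We enumerate the nonempty pieces along $j$, with the finitely many boundary-crossing $J_{n,j}$ split into at most $k_n$ consecutive pieces, to get a sequence $a'_{n,j}>0$. These sum to $a_n$ and satisfy $a'_{n,j}\le a_n2^{-j'}$ for a reindexing $j'\ge j-k_n$. This already gives $\sum_j\sqrt{a'_{n,j}}<\infty$ for each fixed $n$, which proves (2).

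The main obstacle is (3), monotonicity in $n$ for each fixed $j$, because $a_n$ and $k_n$ vary with $n$ and need not be monotone. My first attempt is to decouple the shape from $n$: choose the $j$-th coefficient as $a'_{n,j}=\beta_j\,\gamma_n$, with $\gamma_n$ decreasing. This does not preserve $\sum_j a'_{n,j}=a_n$ when $a_n$ increases to $1$. So instead we use growing tails. We choose a decreasing target sequence $t_j$ and insist that $a'_{n,j}=\min(t_j,\cdot)$-type values, which are nonincreasing in $n$, by placing the early indices $j$ of larger $n$ deliberately smaller. Concretely, we let the $n$-th expansion start with a block of $N_n$ tiny filler pieces, with $N_n$ increasing, so that for $j\le N_n$ the coefficient $a'_{n,j}$ is at most $\min_{m<n}a'_{m,j}$. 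The filler pieces are obtained by splitting off a small amount $\eta_n$ from, say, $a_{n,1}$ into $N_n$ equal parts of size at most $\min_{m<n,\,j\le N_n}a'_{m,j}$. After the filler block comes the dyadic splitting described above, with lengths chosen below $a'_{n-1,j}$ at every position $j$. This is possible because the previous rows are fixed positive numbers and we are free to split each mass into arbitrarily many arbitrarily small pieces.

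Finally, we must check that the inductive construction keeps $\sum_j\sqrt{a'_{n,j}}<\infty$ for each row. The filler block contributes at most $\sqrt{N_n\eta_n}$, which is finite. The rest is a geometric-type splitting, hence summable. Each row is constructed from finitely many conditions relative to the previous row, so the induction on $n$ goes through and gives (1)--(3).
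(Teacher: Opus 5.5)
\paragraph{The gap.} Your treatment of (1) and of the single-row part of (2) is fine. Property (3), however, cannot be achieved in general, and the argument fails at the step where you claim it. Summing (3) over $j$ and using $\sum_j a'_{n,j}=a_n$ gives
\[
a_{n}=\sum_{j=1}^{\infty}a'_{n,j}\le\sum_{j=1}^{\infty}a'_{n-1,j}=a_{n-1},
\]
so (2) and (3) together force $(a_n)$ to be nonincreasing. No arrangement of pieces avoids this. Once you require $a'_{n,j}\le a'_{n-1,j}$ slot by slot, row $n$ can carry total mass at most $\sum_j a'_{n-1,j}=a_{n-1}$, however finely you split.

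Your filler block makes things worse. Slots $1,\dots,N_n$ receive only the tiny total $\eta_n$, so the remaining mass $a_n-\eta_n$ must fit under $\sum_{j>N_n}a'_{n-1,j}$. That quantity is strictly less than $a_{n-1}$ and tends to $0$ as $N_n$ grows. Hence the sentence ``this is possible because the previous rows are fixed positive numbers and we are free to split each mass into arbitrarily many arbitrarily small pieces'' is false. Concretely, take $s_1=\tfrac12\,\mathbf 1_F/m(F)$ and $s_n=\mathbf 1_F/m(F)$ for $n\ge 2$, with $0<m(F)<\infty$. This example even has $a_n\le 1$ and $a_n\to 1$, as in the paper's application, yet no rewriting satisfying (2) and (3) exists. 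You correctly identified increasing $a_n$ as the difficulty, but the growing-tails device does not resolve it, and no construction can.

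\paragraph{Monotone $(a_n)$ is not enough.} Suppose $a_n=1$ for all $n$. Then $a'_{n,j}\le a'_{1,j}$ together with $\sum_j a'_{n,j}=\sum_j a'_{1,j}$ forces $a'_{n,j}=a'_{1,j}$ for all $n,j$. Take $s_n=\sum_{l=1}^{n}\tfrac1n\,\mathbf 1_{F_{n,l}}/m(F_{n,l})$ with strictly nested sets, e.g.\ $F_{n,l}=\{-l,\dots,l\}\subseteq\Z$. The normalized indicators are linearly independent, so in any rewriting the total weight attached to each $F_{n,l}$ is $1/n$. Hence $a'_{1,1}=a'_{n,1}\le 1/n$ for every $n$, a contradiction.

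More generally, (2) and (3) give $a'_{n,j}\le a'_{1,j}$ with $S:=\sum_j\sqrt{a'_{1,j}}<\infty$. A counting argument shows that a row of $k$ strictly nested sets, each carrying weight at least $\beta/k$, is possible only if $k\le 4S^2/\beta$. (Split the slots according to whether $a'_{1,j}\ge\delta$, with $\sqrt{\delta}=\beta/(2S)$.) This uses only the uniform majorant, which is all the proof of Theorem~\ref{weak_type_(1,1)_ergodic} needs. So weakening (3) to $a'_{n,j}\le c_j$ with $\sum_j\sqrt{c_j}<\infty$ does not remove this obstruction either.

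\paragraph{Comparison with the paper.} The paper's own proof has the same defect. Its capping procedure against the previous row is asserted to terminate in finitely many steps with $0<a'_{n,j}\le a'_{n-1,j}$ and $\sum_j a'_{n,j}=a_n$, which is impossible whenever $a_n>a_{n-1}$. What is missing is therefore not a cleverer rearrangement but a corrected statement. At minimum it must assume $(a_n)$ nonincreasing, and, as the second example shows, even that does not suffice without further restrictions on the rows.
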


\begin{proof}
    Consider the first row $(n=1)$, which is initially given as $(a_{1,j})_{j=1}^{k_1}$. We define the new infinite sequence of coefficients as follows:
    \[
    a'_{1,j} := 
    \begin{cases} 
        a_{1,j} & \text{for } 1 \leq j \leq k_1 - 1, \\
        \frac{a_{1,k_1}}{2^{j-k_1+1}} & \text{for } j \geq k_1.
    \end{cases}
    \]
    The corresponding new F\o lner sequence sets are defined as:
    \[
    F'_{1,j} := 
    \begin{cases} 
        F_{1,j} & \text{for } 1 \leq j \leq k_1, \\
        F_{1, k_1} & \text{for } j \geq k_1 + 1.
    \end{cases}
    \]
Next, we construct the second row $(n=2)$. If $a_{2,j} \leq a'_{1,j}$ for all $j$, we define the new second row exactly as we did for the first row. Otherwise, we proceed by redistributing the weights. Let $1 \leq i \leq k_2$ be the first index such that $a_{2,j} \leq a'_{1,j}$ for $1 \leq j \leq i$, but $a_{2,i+1} > a'_{1,i+1}$. We begin modifying the sequence by setting:
    \[
    a''_{2,j} := 
    \begin{cases} 
        a_{2,j} & \text{for } 1 \leq j \leq i, \\
        a'_{1,i+1} & \text{for } j = i+1.
    \end{cases}
    \]
    For the next term, we evaluate the remainder $a_{2,i+1} - a'_{1,i+1}$. If this remainder is less than or equal to $a'_{1,i+2}$, we set:
    \[
    a''_{2,i+2} := a_{2,i+1} - a'_{1,i+1}. 
    \]
    Otherwise, we cap it by setting $a''_{2,i+2} := a'_{1,i+2}$, and consider the new remainder $a_{2,i+1} - a'_{1,i+1} - a'_{1,i+2}$. We repeat this shifting process until the remaining part becomes strictly smaller than the corresponding $a'_{1,j}$. This process clearly ends in finitely many steps. 
    
\medskip

    Suppose that the process for $a_{2,i+1}$ terminates at the index $i'$. We then move to the next term from the second row, $a_{2,i+2}$, and continue the same redistribution process starting against $a'_{1, i'+1}$. Since there are only finitely many terms in the original second row, this entire redistribution concludes after a finite number of steps. Note that the terms generated at each step are strictly positive. 

    \medskip
    \noindent
    Let the final non-zero term obtained in this process be $a''_{2,j'}$ at some index $j'$. If $j' < k_1$, we replace $a''_{2,j'}$ by $\frac{a''_{2,j'}}{2}$ and continue the infinite halving process. Thus, without loss of generality, we may assume $j' \geq k_1$. We can now formally define the infinite sequence for the second row as:
    \[
    a'_{2,j} := 
    \begin{cases} 
        a''_{2,j} & \text{if } j < j', \\
        \frac{a''_{2,j'}}{2^{j-j'+1}} & \text{if } j \geq j'.
    \end{cases}
    \]
    By construction, this ensures that $0 < a'_{2,j} \leq a'_{1,j}$ for all $j$, and that $\displaystyle\sum_{j=1}^{\infty} a'_{2,j}=a_2$. Moreover, define $ F_{n, j}'$ as the first row in the tail part, and in between we keep the same $F_{2, j} $ with the decompositions of $ a_{2, j}$ to obtain the following: 
$$  \sum_{j=1}^{\infty} a'_{n,j} \frac{\mathbf{1}_{F_{n,j}'}}{m(F_{n,j}',)} =  \sum_{j=1}^{k_n} a_{n,j} \frac{\mathbf{1}_{F_{n,j}}}{m(F_{n,j})} $$

    \medskip
    For the general inductive step, given the $n$-th row $(a'_{n,j})_{j=1}^{\infty}$, we can construct the $(n+1)$-th row $(a'_{n+1,j})_{j=1}^{\infty}$ using the exact same redistribution argument. This guarantees that $0 < a'_{n+1,j} \leq a'_{n,j}$ and $\displaystyle\sum_{j=1}^{\infty} a'_{n+1,j}=a_{n+1}$, which completes the proof. 
\end{proof}


\medskip
In order to prove the weak type $(1, 1)$ and the strong $(p,p)$ maximal inequality, we need the F\o lner sequences to be admissible in the sense given in the paper \cite{cadilhac2022noncommutative}. But from the construction of $(F_{n,j})$, it is not guaranteed. However, from Remark \ref{rem: Folner cond}, it is clear that $(F_{n,j})_n$ is a sequence of F\o lner sets. Hence, we want to find an admissible subsequence of $(F_{n,j})$. Now we discuss the construction of the admissible F\o lner subsequence. 

Thus, we discuss the following. First, recall the definition of an admissible F\o lner sequence in a locally compact, $\sigma$-compact group $G$. We begin with the following definitions. 
\begin{defn}
    Let $K$ be a compact subset of the group $G$ and $E \subset G$, the $K$-\emph{boundary} of the set $E$ is defined as the following
    \begin{align*}
        \partial_K(E):= \cup_{g \in G}\{ gK : gK \cap E \neq \emptyset, \text{ and } gK \cap E^c \neq \emptyset \}.
    \end{align*}
    Now for $\eps>0$, the set $E$ is said to be $(\eps, K)$-\emph{boundary invariant} if 
    \begin{align*}
        \abs{\partial_K(E)} < \eps \abs{E}.
    \end{align*}
\end{defn}

\begin{defn}
    Let $E$ be a subset of the group $G$, the \emph{lower Banach density} of the set is defined by 
    \begin{align*}
        \underline{B}(E):= \sup_{K \subseteq G} \inf_{g \in G} \frac{\abs{gK \cap E}}{\abs{gK}},
    \end{align*}
    where the supremum is taken over all compact subsets $K$ of the group $G$.
\end{defn}



\begin{defn}
    A \emph{Filtered F\o lner sequence} is a triple $\CS:=((F_n)_n, (B_n)_n, (\CP_n)_n)$, where $(F_n)_n$ is a F\o lner sequence of subsets of $G$, $(B_n)_n$ is a sequence of compact subsets of $G$ and $(\CP_n)_n$ is a sequence of partitions of the group $G$, satisfying the following conditions:
    \begin{enumerate}
        \item for any $n \in \N$, and $A \in \CP_n$, there exists $\gamma \in G$ such that $A \subseteq \gamma B_n$,
        \item for any $n \geq k$, the set $F_n$ is $(2^{n-k}, B_k)$- boundary invariant.
    \end{enumerate}
    Now an atom $A \in \CP_k$ is said to be \emph{admissible} if for all $n < k$, the set $A$ is $(2^{n-k}, F_n)$-invariant (that is, $\abs{AF_n \setminus A} \leq 2^{n-k} \abs{F_n}$ and $\abs{AF_n^{-1} \setminus A} \leq 2^{n-k} \abs{F_n}$). Let $\CP_n^a$ denotes the collection of admissible atoms of $\CP_n$ and define
    $$Adm(\CS):= \cap_{n\geq 0} \cup_{A \in \CP_n^a} A.$$
    \noindent Let $0<c<1$, we say that $\CS$ is $c$-\emph{admissible} if $\underline{B}(Adm(\CS)) \geq 1-c$. By a slight abuse of notation, we say that a F\o lner sequence of sets $(F_n)_n$ of $G$ is called \emph{admissible} if there exists a triple $\CS:=((F_n)_n, (B_n)_n, (\CP_n)_n)$ and $c \in (0,1)$ satisfying the above conditions.
\end{defn}

\begin{rem}\label{adm_subsequence}
    Let $\CS:=((F_n)_n, (B_n)_n, (\CP_n)_n)$ be a $c$-admissible F\o lner sequence for some $c \in (0,1)$. Consider $\CS':=((F_{n_k})_k, (B_{n_k})_k, (\CP_{n_k})_k)$. Then observe that $Adm(\CS) \subseteq Adm(\CS')$. Therefore, for all compact set $K$ of $G$ and $g \in G$,
    \begin{align*}
        \frac{\abs{gK \cap Adm(\CS)}}{\abs{gK}} \leq \frac{\abs{gK \cap Adm(\CS')}}{\abs{gK}}.
    \end{align*}
    Hence, it is clear that $1-c \leq \underline{B}(Adm(\CS)) \leq \underline{B}(Adm(\CS'))$. So, $\CS'$ is also a $c$-admissible F\o lner sequence. We conclude that a subsequence of a $c$-admissible F\o lner sequence is $c$-admissible.
\end{rem}

\begin{lem}\label{Adm_Folner_sequence}
    Let $(s_n)$ be a sequence of simple functions of the form $$s_n = \sum_{j=1}^{\infty} a_{n,j} \frac{\mathbf{1}_{F_{n,j}}}{m(F_{n,j})}.$$
where $(F_{n,j})$ is a F\o lner sequence for each $j$ and there exists $k_n \in \N$ such that $F_{n,j}=F_{n,k_n}$ for all $j \geq k_n.$ Then there exists a subsequence $(s_{n_t})$ such that $(F_{n_t,j})_{ t \in \N}$ is an admissible F\o lner sequence. 
\end{lem}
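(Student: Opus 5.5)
We plan to combine the Cadilhac--Wang fact that every F\o lner sequence in a l.c.s.c.\ amenable group has an admissible subsequence \cite{cadilhac2025noncommutativemaximalergodicinequalities} with Remark \ref{adm_subsequence}, which says admissibility passes to subsequences, and then run a Cantor diagonal argument over the column index $j$. The feature $F_{n,j}=F_{n,k_n}$ for $j\ge k_n$ means that for each fixed $n$ only finitely many distinct sets occur. However, the column sequences $(F_{n,j})_n$ are genuinely different for different $j$, so infinitely many columns must be handled simultaneously.

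\emph{Step 1.} Apply the Cadilhac--Wang extraction to $(F_{n,1})_n$. This gives an infinite set $N_1\subseteq\N$ such that $(F_{n,1})_{n\in N_1}$ is $c$-admissible for some $c\in(0,1)$, witnessed by a filtered triple $\CS_1$.

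\emph{Step 2.} Inductively, given $N_{j-1}$, apply the same extraction to the F\o lner sequence $(F_{n,j})_{n\in N_{j-1}}$. This yields an infinite $N_j\subseteq N_{j-1}$ on which column $j$ is admissible. By Remark \ref{adm_subsequence}, every column $i<j$ remains admissible along $N_j$: passing to a subsequence of the index set also passes to the corresponding subsequence of $(B_n)$ and $(\CP_n)$, and this only enlarges $Adm$.

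\emph{Step 3.} Take the diagonal sequence $n_t:=$ the $t$-th element of $N_t$, chosen strictly increasing. For each fixed $j$, the tail $(n_t)_{t\ge j}$ lies in $N_j$, so $(F_{n_t,j})_{t\ge j}$ is a subsequence of an admissible F\o lner sequence and is therefore admissible by Remark \ref{adm_subsequence}. Adding finitely many initial terms $t<j$ should not affect admissibility: the boundary-invariance and admissible-atom conditions only involve comparisons that can be arranged by reindexing or discarding the first finitely many levels. If this turns out to be delicate, one can instead define admissibility of a column up to a finite shift, which is all that is used later.

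\textbf{Main obstacle.} The delicate point is Step 3, namely that admissibility survives the finite initial modification and the diagonal procedure with a uniform constant $c$. The density constant could degrade across columns, and the filtered data $(B_n,\CP_n)$ are column-dependent. I will first check that the Cadilhac--Wang construction yields $c$-admissibility for any prescribed $c$, for example by choosing $c_j=c$ fixed at each stage, so that no degradation occurs. I would also use the eventual constancy $F_{n,j}=F_{n,k_n}$ to note that for $j\ge k_n$ no new sets appear at row $n$. This does not by itself reduce the problem to finitely many columns, since $k_n$ may be unbounded, so the diagonal argument remains necessary.
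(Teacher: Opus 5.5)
Your route is the paper's. The paper also combines Theorem A of Cadilhac--Wang with Remark~\ref{adm_subsequence}, builds nested index sets $N_1\supseteq N_2\supseteq\cdots$, takes a diagonal, and then asserts without argument that every column $(F_{n_l,j})_l$ is admissible. The only difference is that the paper handles the block of columns $1,\dots,k_{n_{l-1}}$ at stage $l$ rather than one column per stage. Your bookkeeping is cleaner: your $n_t$ is strictly increasing, whereas $n_l=\min N_l$ can repeat, and you reach every column even if $(k_n)$ stays bounded along the diagonal.

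The point you flag in Step~3 is a genuine gap, and the paper passes over it silently. Your hope that finitely many initial terms are harmless is false for the definition used here, because the conditions are rigid in the index and no level can be discarded, since $Adm(\CS)$ intersects over all levels.
\begin{itemize}
\item Condition~(2) with $n=k$ demands $|\partial_{B_k}(F_k)|<|F_k|$.
\item An admissible atom $A\in\CP_k$ must satisfy $|AF_{k-1}\setminus A|\le\frac12|F_{k-1}|$. In a unimodular group this forces $|B_k|\ge|A|\ge\frac12|F_{k-1}|$.
\item So if $|F_k|<\frac12|F_{k-1}|$ for a single $k$, every translate of $B_k$ meeting $F_k$ also meets $F_k^c$. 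Hence $|\partial_{B_k}(F_k)|\ge|B_k|>|F_k|$.
\item Therefore $\CP_k^a=\emptyset$ and $Adm(\CS)=\emptyset$ for every choice of $(B_n)$, $(\CP_n)$.
\end{itemize}
The prefix $F_{n_1,j},\dots,F_{n_{j-1},j}$ was selected with no regard to column $j$, so nothing excludes such a size drop. In fact the lemma as literally stated fails. In $\Z$, let column $j$ be $[0,10^{n^2})$ for $n\le j$, $\{0\}$ for $n=j+1$, and $[0,n)$ for $n\ge j+2$; then $k_n=n$ works. For any $n_1<n_2<\cdots$, column $j=n_2-1$ begins with $[0,10^{n_1^2})$ followed by $\{0\}$, so it is not admissible.

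What your diagonal does prove is tail admissibility: $(F_{n_t,j})_{t\ge j}$ is admissible for every $j$. That is the right replacement statement, but it is not literally ``all that is used later.'' The proof of Theorem~\ref{weak_type_(1,1)_ergodic} applies the Cadilhac--Wang inequality to entire columns with a constant $C'$ independent of $j$; this is where your uniform-$c$ concern genuinely matters.

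With only tails admissible, you must absorb the early terms separately, and this can be done. For $x\ge0$, since $a_{n,j}\le a_{1,j}$, one has $\sum_{j>t}a_{n_t,j}A^s_{n_t,j}(x)\le b:=\sum_j a_{1,j}\sum_{t'<j}A^s_{n_{t'},j}(x)$ for every $t$. Moreover $\|b\|_1\le\|x\|_1\sum_j(j-1)a_{1,j}<\infty$, because Lemma~\ref{decomposition_of_sequence} produces geometric tails. The projection $\chi_{[0,\lambda]}(b)$ then controls these terms. The terms with $j\le t$ are handled by the tail inequalities exactly as in the paper.
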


\begin{proof}
    We will construct the subsequence using a standard diagonal argument. 
    For $j\in \{1, \cdots, k_1\}$, first consider the F\o lner sequences $(F_{n,j})_{n=1}^{\infty}$. By Theorem A of \cite{cadilhac2022noncommutative} and Remark \ref{adm_subsequence}, we can find a subsequence of indices $N_1 \subseteq \mathbb{N}$ such that $(F_{n,j})_{n \in N_1}$ is an admissible F\o lner sequence for all  $j\in \{1, \cdots, k_1\}$.
    Let $n_1$ be the least element of $N_1$ and consider
    \begin{align*}
        s_{n_1}:= \sum_{j=1}^\infty a_{n_1,j} \frac{\mathbf{1}_{F_{n_1,j}}}{m(F_{n_1,j})}.
    \end{align*}
    Next, noting that $(F_{n,j})_{n \in N_1}$ are F\o lner sequences, where $j\in \{1, \cdots, k_{n_1}\}$, we again deploy Theorem A of \cite{cadilhac2022noncommutative} and Remark \ref{adm_subsequence}, to extract admissible F\o lner subsequences $(F_{n,j})_{n \in N_2}$, where $N_2 \subseteq N_1$ and $j\in \{1, \cdots, k_{n_1}\}$. Let $n_2$ be the least element of $N_2$ and consider
    \begin{align*}
        s_{n_2}:= \sum_{j=1}^\infty a_{n_2,j} \frac{\mathbf{1}_{F_{n_2,j}}}{m(F_{n_2,j})}.
    \end{align*}
    In general, for $l \geq 2$, let $(F_{n,j})_{n \in N_{l-1}}$,  where $j\in \{1, \cdots, k_{n_{l-1}}\}$ are F\o lner sequences. Applying Theorem A of \cite{cadilhac2022noncommutative} and Remark \ref{adm_subsequence}, we get admissible F\o lner subsequences $(F_{n,j})_{n \in N_{l}}$ for $j\in \{1, \cdots, k_{n_{l-1}}\}$, where $N_l \subseteq N_{l-1}$. Let $n_l$ be the least element of $N_l$ and write
    \begin{align*}
        s_{n_l}:= \sum_{j=1}^\infty a_{n_l,j} \frac{\mathbf{1}_{F_{n_l,j}}}{m(F_{n_l,j})}.
    \end{align*}
    Observe that for every $j \in \N$, $(F_{n_l,j})_{l \in \N}$ is an admissible F\o lner sequence. This completes the proof.
    \end{proof}

\begin{rem}
By the Lemma \ref{decomposition_of_sequence} we may assume that $${A}_n^s(x)= \sum_{j=1}^{\infty}a_{n,j}{A}_{n,j}^s(x)$$ where $\sum_{j=1}^{\infty}a_{n,j}=a_n$,  $a_n \to 1$, $\displaystyle \sum_{j=1}^{\infty}\sqrt{a_{n,j}} < \infty $ and for each $j$, $a_{n,j}$ is decreasing (i.e. $a_{n+1,j} \leq a_{n,j}$).
    In view of Lemma \ref{Adm_Folner_sequence} passing through a subsequence, we may assume that $(F_{n,j})_n$ is an admissible F\o lner sequence for all $j \in \N.$ 
\end{rem}
  Next theorem proves the weak type $(1,1)$ maximal inequality. 
\begin{thm}\label{weak_type_(1,1)_ergodic}
    Let $x \in L^1(\CM)$ and $\lambda > 0$. Then there exists a projection $e_x \in \CM$ such that 
    \begin{align*}
        e_x{A}_n^s(x)e_x \leq \lambda~~\text{for all }n \in \N ~~\text{and}~~\tau(1-e_x)\leq C\frac{\|x\|_1}{\lambda}.
    \end{align*}
    For $p > 1$, $ ( A_n^s)$ satisfies a strong $(p,p)$ maximal inequality, indeed, for $ x \in L^p(\CM, \tau) $ we have 
    $$ \norm{\left( {A}^s (x)\right)}_{ L^p(\CM; \ell^\infty)} \lesssim \norm{x}_p. $$
\end{thm}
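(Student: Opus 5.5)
We will deduce both inequalities from the corresponding maximal inequalities along each admissible F\o lner sequence $(F_{n,j})_n$, using the decomposition $A_n^s(x)=\sum_{j\ge1}a_{n,j}A_{n,j}^s(x)$ from the preceding remark. Two features of that decomposition are essential: each coefficient is monotone, $a_{n,j}\le a_{1,j}=:b_j$, and the coefficients satisfy $\sum_j\sqrt{b_j}<\infty$. By the main theorem of Cadilhac--Wang \cite{cadilhac2025noncommutativemaximalergodicinequalities}, for every $j$ the averages $(A^s_{n,j})_n$ along the admissible F\o lner sequence $(F_{n,j})_n$ satisfy a weak type $(1,1)$ inequality with a constant $C_0$. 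They also satisfy strong $(p,p)$ inequalities, $p>1$, with constants $C_p$. The key point is that these constants are universal: they depend only on the admissibility parameter $c$ (and $p$), not on $j$. We will first check this uniformity, choosing the same $c$ in every application of Theorem A in Lemma \ref{Adm_Folner_sequence}.

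\textbf{Weak type (1,1).} Fix $x\in L^1(\CM)_+$ and $\lambda>0$. It suffices to treat positive $x$, since a general $x$ is a combination of four positive parts. For each $j$ we set $\lambda_j:=\lambda\sqrt{b_j}/S$, where $S:=\sum_i\sqrt{b_i}$. The weak type inequality for the $j$-th family gives a projection $e_j$ with
\[
e_jA^s_{n,j}(x)e_j\le\lambda_j \ \text{ for all } n, \qquad \tau(1-e_j)\le C_0\|x\|_1/\lambda_j .
\]
We cannot simply take $e=\bigwedge_j e_j$ and sum the bounds $a_{n,j}\lambda_j$, because $e A_{n,j}e\le \lambda_j$ does not follow from $e_jA_{n,j}e_j\le\lambda_j$ unless $e\le e_j$. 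In fact it does follow: $e\le e_j$ gives $e=ee_j=e_je$, hence $eA_{n,j}e=e(e_jA_{n,j}e_j)e\le\lambda_j e$. Therefore
\[
eA_n^s(x)e\le\sum_j a_{n,j}\lambda_j\le \frac{\lambda}{S}\sum_j b_j\sqrt{b_j}\le \lambda\max_j b_j\le\lambda .
\]
The cost is the trace estimate
\[
\tau(1-e)\le\sum_j\tau(1-e_j)\le \frac{C_0\|x\|_1 S}{\lambda}\sum_j b_j^{-1/2}.
\]
This diverges, so the weights are badly chosen and must be rebalanced. We will take $\lambda_j=\lambda\,\theta_j$ with $\sum_j b_j\theta_j\le1$ and $\sum_j\theta_j^{-1}<\infty$. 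The natural choice is $\theta_j=1/(S\sqrt{b_j})$, for which $\sum_j b_j\theta_j=1$ and $\sum_j\theta_j^{-1}=S^2<\infty$. This is precisely where $\sum_j\sqrt{b_j}<\infty$ enters. It yields $\tau(1-e)\le C_0S^2\|x\|_1/\lambda$, so $C=C_0S^2$.

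\textbf{Strong $(p,p)$.} For $x\ge0$ we pick, for each $j$, a majorant $c_j\in L^p_+$ with $A^s_{n,j}(x)\le c_j$ for all $n$ and $\|c_j\|_p\le C_p\|x\|_p$. We then set $c:=\sum_j b_jc_j$. This series converges in $L^p$ because $\sum_j b_j\le a_1\le1$, and it satisfies $A_n^s(x)\le\sum_ja_{n,j}c_j\le c$ with $\|c\|_p\le C_p\|x\|_p$. A general $x$ is handled by decomposing into positive parts.

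\textbf{Main obstacle.} The hardest step is justifying that Cadilhac--Wang's constants are independent of $j$, i.e.\ depend only on $c$, since the admissible subsequences for different $j$ are extracted separately in Lemma \ref{Adm_Folner_sequence}. If needed, we will reinspect their proof to confirm that the constant depends only on $c$ and the group structure through the filtration axioms. A secondary concern is that their F\o lner convention is left versus right; we will adjust this via Remark \ref{rem: Folner cond}.
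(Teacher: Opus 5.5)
Your proof is correct. For the weak type $(1,1)$ bound you end up with exactly the paper's argument. The ingredients are the same: thresholds $\lambda_j=\lambda/(S\sqrt{a_{1,j}})$ with $S=\sum_j\sqrt{a_{1,j}}$, the projection $e=\bigwedge_j e_j$, the monotonicity $a_{n,j}\le a_{1,j}$, and the constant $C=C_0S^2$. You single out the uniformity of $C_0$ in $j$ as the main obstacle; the paper simply asserts this (``$C'>0$ independent of the F\o lner sequences''), so you rely on nothing more than it does. Your remark that a common admissibility parameter $c$ should be used in Lemma~\ref{Adm_Folner_sequence} is a sensible addition. Your justification of the compression step is also better than the written proof. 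The paper passes through $e_xA_{n,j}(x)e_x\le e_{j,x}A_{n,j}(x)e_{j,x}$, which is false in general for projections $e_x\le e_{j,x}$. Your identity $eA_{n,j}(x)e=e\,(e_jA_{n,j}(x)e_j)\,e\le\lambda_je$ is what actually makes the argument work.

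For $p>1$ you take a genuinely different route. The paper notes $\sup_n\|A_n^s(x)\|_\infty\le\|x\|_\infty$. It combines this with the weak $(1,1)$ bound just proved for the whole family $(A_n^s)_n$ and applies the noncommutative interpolation result \cite[Theorem 1.3]{cadilhac2025revisiting}. You instead sum the majorants $c=\sum_ja_{1,j}c_j$ obtained from the strong $(p,p)$ inequality of each individual family $(A^s_{n,j})_n$. Your route has three advantages: it is more elementary, it needs only $\sum_ja_{1,j}\le1$ rather than $\sum_j\sqrt{a_{1,j}}<\infty$, and it avoids interpolating the combined family. The cost is that the strong type constants $C_p$ must also be uniform in $j$, whereas the paper only needs uniformity of the weak type constant. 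If, as is standard, the per-family strong type bounds come from interpolating the weak $(1,1)$ and $L^\infty$ bounds, then uniformity of $C_0$ already gives uniformity of $C_p$. In that case the two routes rest on the same input.
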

\begin{proof}
    Since ${A}_{n,j}^s(x)=\frac{1}{m(F_{n,j})}\int_{F_{n,j}}\alpha_g(x)dm(g)$ and for all $j,$ $(F_{n,j})$ is an admissible F\o lner sequence, by \cite{cadilhac2022noncommutative} ${A}_{n,j}^s(x)$ satisfies weak type $(1,1)$ maximal inequality. Hence, there exist $C'>0$ (independent of the F\o lner sequences) and $e_{j,x} \in \CM$ with
    
    $$e_{j,x}{A}_{n,j}^s(x)e_{j,x} \leq \frac{\lambda}{\sqrt{a_{1,j}}}\cdot \frac{1}{\big(\displaystyle\sum_{j=1}^\infty \sqrt{a_{1,j}}\big)} \text{ and } \tau(1-e_{j,x}) \leq C'\frac{\|x\|_1\cdot \sqrt{a_{1,j}}}{\lambda}\sum_{j=1}^\infty \sqrt{a_{1,j}}.$$
    
    Let us define $e_x:= \displaystyle\bigwedge_je_{j,x}.$
    Hence, $$\tau(1-e_x) \leq \sum_{j=1}^{\infty}\tau(1-e_{j,x}) \leq C'\frac{\|x\|_1}{\lambda}\big(\sum_{j=1}^{\infty} \sqrt{a_{1,j}}\big)^2 \leq C\frac{\|x\|_1}{\lambda} $$
where $C=C'\cdot \big(\sum_{j=1}^{\infty} \sqrt{a_{1,j}}\big)^2.$ Now from the Lemma \ref{decomposition_of_sequence} we note that $a_{n,j} \leq a_{1,j}$ for all $j\in \N.$ Therefore, we have 
\begin{align*}
    e_x{A}_{n}(x)e_x&=\sum_{j=1}^{\infty}a_{n,j}e_x{A}_{n,j}(x)e_x \\
    &\leq \sum_{j=1}^{\infty}a_{n,j}e_{j,x}{A}_{n,j}(x)e_{j,x}\\
    &\leq \sum_{j=1}^{\infty}a_{1,j}e_{j,x}{A}_{n,j}(x)e_{j,x}\\
    &\leq \sum_{j=1}^{\infty}a_{1,j} \big(\frac{\lambda}{\sqrt{a_{1,j}}}\big)\cdot \frac{1}{\big(\displaystyle\sum_{j=1}^{\infty} \sqrt{a_{1,j}}\big)}=\lambda.
\end{align*}

Note that since for all $n \in \N$, $\sum_{j \in \N} a_{n,j} =a_n$ and $(a_n)_n$ is a bounded sequence, we obtain
\begin{align*}
    \norm{{A}^s(x)}_\infty \lesssim \norm{x}_\infty, \text{ for all } x \in \CM,
\end{align*}
which by definition implies that the sequence $(A^s_n)_n$ is weak type $(\infty, \infty)$. Therefore, by \cite[Theorem 1.3]{cadilhac2025revisiting}, we show that $( A^s_n)_n$ satisfies strong $(p,p)$.

\end{proof}

\begin{thm}\label{thm: bau and su convergence}
    Let $(A_n^h)_n$ be as before. Assume that $1 \leq p_0< p_1 \leq \infty$. Further, assume that $(A^h_n)_n$ is a strong type $(p,p)$ for all $p_0<p<p_1$ and $x \in L^p(\CM)$. Then the following holds.
    \begin{enumerate}[label=(\roman*)]
        \item Let $p_0<p\leq 2p_0$ and $x \in L_p(\CM)$. Then $A^h_n(x)$ converges to $Px$ in b.a.u.
        \item Let $2p_0 <p < p_1$ and $x \in L_p(\CM)$. Then $A^h_n(x)$ converges to $Px$ in a.u.
    \end{enumerate}
\end{thm}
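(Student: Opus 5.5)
The plan is the standard two-step route: from the maximal inequality to b.a.u./a.u. convergence, one first establishes convergence on a dense subspace and then passes to all of $L^p(\CM)$ through a noncommutative Banach principle. This is the route taken by Junge--Xu and by Cadilhac--Wang in the admissible F\o lner setting.

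\textbf{Identifying the limit.} We use the mean ergodic decomposition in $L^2(\CM,\tau)$, and more generally in the reflexive range. Let $P$ denote the projection onto the fixed-point space $\{y : \alpha_g(y)=y \text{ for all } g\in G\}$. The dense subspace $D$ is spanned by the fixed points together with the coboundaries $y-\alpha_{g_0}(y)$, where $y\in \CM\cap L^1(\CM)$ and $g_0\in G$; intersected with $\CM\cap L^1(\CM)$, it is dense in $L^p$ for $1<p<\infty$. On fixed points we have $A_n^h(x)=\big(\int h_n\,dm\big) x = x = Px$ exactly. For a coboundary, a change of variables using the right Haar measure gives
\[
A_n^h\big(y-\alpha_{g_0}(y)\big) = \int_G \alpha_g(y)\,\big(h_n(g)-h_n(g_0^{-1}g)\big)\,dm(g),
\]
which is of the form $\int_G \alpha_g(y)\,\big(h_n-\lambda_{g_0}h_n\big)(g)\,dm(g)$. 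We choose the convention matching the Reiter condition, passing to $\rho$ if necessary through the appropriate side of the action. Its operator norm is at most $\|y\|_\infty\|\lambda_{g_0}h_n-h_n\|_1\to 0$. Hence $A_n^h(x)\to Px=0$ uniformly in $\CM$, which implies a.u. convergence on $D$. For $p=1$ (when $p_0=1$, so case (i) allows $1<p\le 2$), we need not treat $p=1$ itself, since the hypothesis is only strong type for $p>p_0$.

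\textbf{Extending to all of $L^p$.} We invoke the noncommutative Banach principle in the form of \cite{junge2007noncommutative}, Remark 6.5 and Theorem 7.?, as used in \cite{cadilhac2025noncommutativemaximalergodicinequalities}. If $(A_n)$ is bounded $L^p\to L^p(\CM;\ell^\infty)$, then the set of $x$ for which $A_n x$ converges b.a.u. is closed in $L^p$. If $(A_n)$ is bounded into $L^p(\CM;\ell^\infty_c)$, the column space, then the same holds for a.u. convergence. Case (i) follows directly from the strong $(p,p)$ hypothesis together with density of $D$, for any $p_0<p<p_1$, in particular for $p\le 2p_0$. For case (ii), we need column maximal bounds. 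The standard trick is as follows: for positive $x$ with $2p_0<p<p_1$, apply the strong type $(p/2,p/2)$ bound (valid since $p/2>p_0$) to $x^2$. Kadison--Schwarz gives $A_n(x)^*A_n(x)\le \|h_n\|_1 A_n(x^*x)=A_n(|x|^2)$, because each $A_n^h$ is a unital positive (indeed completely positive) contraction, being an average of automorphisms against a probability density. Hence $\sup_n A_n(x)^*A_n(x)\le a$ with $\|a\|_{p/2}\lesssim \|x\|_p^2$, which is exactly a bound in $L^p(\CM;\ell^\infty_c)$. Then a.u. convergence on the dense set $D$ transfers to all of $L^p$ by the a.u. version of the Banach principle, with the limit identified as $Px$ by compatibility with the mean ergodic convergence in $L^p$ norm.

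\textbf{Main obstacle.} The main difficulty is the closedness or Banach-principle step. We must verify that the cited principle applies to sequences of maps given by integrals against $h_n$ rather than by finite averages. We would verify the standard $\varepsilon/2^k$ argument directly: approximate $x$ by $x_k\in D$ with $\|x-x_k\|_p$ small, use the maximal bound to produce projections $e_k$ with small complement controlling $\sup_n\|e_k A_n(x-x_k)e_k\|$, and intersect them. We would also check the sign and side conventions making the coboundary estimate use precisely the given Reiter condition.
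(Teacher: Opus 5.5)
Your proposal is correct, and its ingredients are the paper's: the dense class of coboundaries $y-\alpha_t(y)$ with $y\in\CM\cap L_1(\CM)$, and the bound $\|A^h_n(y-\alpha_t(y))\|_\infty\le\|y\|_\infty\|h_n-\rho_{t^{-1}}(h_n)\|_1\to 0$. Part (ii) is also the same as in the paper: applying the strong type $(p/2,p/2)$ bound to $x^2$ via Kadison's inequality gives a column maximal bound.

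Where you differ is the passage from the dense class to all of $L_p(\CM)$.

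\textbf{The paper's route.} It stays inside $L_p(\CM;\ell^\infty)$. Since $L_p(\CM;c_0)$ is closed there, it must show $(A^h_n(x_k))_n\in L_p(\CM;c_0)$ for $x_k\in S$. Because $\tau$ may be infinite, uniform decay in $\CM$ does not give this by itself. The paper therefore interpolates, $\|(A^h_j(x))_{j=m}^n\|_{L_p(\CM;\ell^\infty)}\lesssim\sup_{m\le j\le n}\|A^h_j(x)\|_\infty^{1-q/p}\,\|(A^h_j(x))_{j=m}^n\|_{L_q(\CM;\ell^\infty)}^{q/p}$. This uses strong type at some $q\in(p_0,p)$, which is why a range of exponents below $p$ is needed. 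The paper then relies, implicitly, on the standard fact that membership in $L_p(\CM;c_0)$ (resp. $L_p(\CM;c_0^c)$) forces b.a.u. (resp. a.u.) convergence to $0$.

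\textbf{Your route.} Your Banach-principle argument turns the maximal bound at the exponent $p$ itself (resp. $p/2$) into a Chebyshev-type projection estimate. It then runs an $\varepsilon 2^{-k}$ argument, for which uniform norm convergence on the dense class already suffices. This is more elementary and needs neither the interpolation step nor the $c_0$-valued machinery. The paper's route, in exchange, yields the stronger structural conclusion $(A^h_n(x)-Px)_n\in L_p(\CM;c_0)$.

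\textbf{Two small fixes.}
\begin{enumerate}
\item With the right Haar measure, the substitution $g\mapsto gt$ produces $h_n(gt^{-1})=\rho_{t^{-1}}(h_n)(g)$, not $h_n(t^{-1}g)$. So the given right Reiter condition is exactly what is used, and no change of side is needed.
\item To avoid appealing to completeness of the b.a.u. topology, compare directly with the known limit:
$A^h_n(x)-Px=A^h_n(x-x_k)+A^h_n(x_k-Px_k)+P(x_k-x)$.
The middle term only involves the coboundary part of $x_k$, because $A^h_n$ fixes fixed points exactly; so the fixed-point part need not lie in $\CM$, and your intersection with $\CM\cap L_1(\CM)$ is unnecessary there. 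The last term is handled by a spectral projection of $|P(x_k-x)|$, since $\|P(x_k-x)\|_p\le\|x_k-x\|_p$.
\end{enumerate}
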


\begin{proof}
    Let us define 
    $$
    S:= span \{ x - \alpha_s(x): x \in \CM \cap L_1(\CM) \}.
    $$
    Choose $x \in S$ such that $x= y - \alpha_t(y)$ for some $y \in \CM \cap L_1(\CM)$. 

    Since $m$ is a right Haar measure and $(h_n)_n$ is a right Reiter sequence, we observe that
    \begin{align*}
        \int_G \alpha_{gt}(y) h_n(g)dm(g)
        = \int_G \alpha_g(y) \rho_{t^{-1}}(h_n)(g)  dm(g).
    \end{align*}
    Hence, 
    \begin{align*}
        A^h_n(x)
        = \int_G \alpha_g(y) \left[ h_n(g) - \rho_{t^{-1}}(h_n)(g) \right] dm(g).
    \end{align*}
    So, we obtain
    \begin{align*}
        \norm{A^h_n(x)}_\infty  \leq \norm{y}_\infty \norm{h_n - \rho_{t^{-1}} (h_n)}_1 \to 0 \text{ as } n \to \infty.
    \end{align*} 
    \emph{Proof of (i):} Let $x \in L_p(\CM)$. Then there exists a sequence $(x_k)$ in $S$ such that 
    \begin{align*}
        \lim_{k \to \infty} \norm{x- Px - x_k}_p = 0,
    \end{align*}
    where $P$ is the projection map from $L_p(\CM)$ onto the fixed point space. Note that $A_n^h P = P A_n^h= P$. So, by strong $(p,p)$ of $(A_n^h)_n$ we get for all $k \in \N$,
    \begin{align*}
        \norm{(A_n^h(x) - Px - A_n^h(x_k))_n}_{L_p(\CM; \ell^\infty)} \lesssim \norm{x- Px - x_k}_p.
    \end{align*}
    Therefore, since $L_p(\CM; c_0)$ is closed in $L_p(\CM; \ell_\infty)$ and $\lim_{k \to \infty} (A_n^h(x_k))_n= (A_n^h(x) - Px)_n$ in $L_p(\CM; \ell^\infty)$, it is now enough to prove that for all $k \in \N$, $(A^h_n(x_k))_n \in L_p(\CM; c_0)$. Again for $x \in S$, just as before we find that $x \in L_q(\CM)$ for some $p_0<q<p$ and since $(A_n^h)_n$ is a strong type $(q,q)$, we have for all $m<n$
    \begin{align*}
    	\norm{(A_j^h(x))_m^n}_{L_p(\CM; \ell^\infty)}
    	& \lesssim \sup_{m \le j \le n} \norm{A_j^h(x)}_\infty^{1-q/p} \norm{(A_j^h(x))_m^n}^{q/p}_{L_q(\CM; \ell^\infty)}.\\
    	& \leq \sup_{m \le j \le n} \left( \norm{y}_\infty \norm{h_n - R_{t^{-1}} h_n}_1  \right)^{1-q/p} \norm{(A_j^h(x))_m^n}^{q/p}_{L_q(\CM; \ell^\infty)}.
    \end{align*}
    Therefore, the finite sequence $(A_1^h(x), \cdots, A_n^h(x),0,0, \cdots)$ is Cauchy and hence converges to $(A_n^h(x))_n$ in $L_p(\CM; \ell^\infty)$. Hence, $(A_n^h(x))_n \in L_p(\CM; c_0)$.\\
    
     \emph{Proof of (ii):} On the other hand, by Kadison's inequality and by the strong type $(p,p)$ inequality, for all $x \in \CM_+ \cap L_p(\CM)_+$, we obtain
     \begin{align*}
     	\norm{(A_n(x))_n}_{L_p(\CM; \ell_\infty^c)} \leq \norm{((A_n(x))^2)_n}_{L_{p/2}(\CM; \ell_\infty)} ^{1/2} \lesssim \norm{x}_p.
     \end{align*}
    Hence, a similar argument as before shows that $(A^h_nx- Px)_n \in L_p(\CM; c^c_0)$.
\end{proof}

\section{Wiener-Wintner Theorem for Reiter sequence}
This section is devoted to a non-commutative Wiener-Wintner theorem along a Reiter sequence for a trace-preserving dynamical system $(\CM, G, \alpha)$ where $\CM$ is finite von Neumann algebra equipped with faithful normal trace $\tau$ and $G$ be a locally compact second countable group that admits a two-sided Reiter sequence.
Now any l.c.s.c group that admits a two-sided Reiter sequence is unimodular. Indeed, suppose $(s_n)_n$ is a two-sided Reiter sequence and $m$ be a left invariant Haar measure. The right Reiter condition yields
$\|\rho_g(s_n)-s_n\|_1 \to 0$ for all $g \in G.$
Observe that \begin{align*}
\|\rho_g(s_n)\|_1=\int_Gs_n(xg)dm(x)=\int_G\Delta(g)^{-1}s_n(x)dm(x)=\Delta(g)^{-1},
\end{align*}
where $\Delta:G \to (0,\infty)$ be the modular function.  Therefore, 
\begin{align*}
    |\Delta(g)^{-1}-1|=|\|\rho_g(s_n)\|_1-\|s_n\|_1|\leq \|\rho_g(s_n)-s_n\|_1\rightarrow 0.
\end{align*}
This implies $\Delta(g)\equiv 1,$ which ensures that $G$ is unimodular. 
Let $\pi:G \to \CU_d(\mathbb{C})$ be a unitary representation of $G$ where $\CU_d(\mathbb{C})$ is group of all unitary $d\times d$-matrices, $\phi \in C(\CU_d(\mathbb{C}))$ with $\|\phi\|_\infty=1$ and $x \in L^p(\CM,\tau).$  Let us define the weighted average along a Reiter sequence $(s_n)$
$${A}_n^{\pi,\phi,u}(x):=\int_G\alpha_g(x)\phi(\pi(g)u)s_n(g)dm(g), ~~~u\in \CU_d(\mathbb{C}).$$
Our goal is to prove the b.a.u. convergence of ${A}_n^{\pi,\phi,u}(x)$ for $x \in L^p(\CM,\tau)$ with $1 \leq p < \infty.$ Crucially, the convergence is uniform over the symbols $(\pi, \phi, u)$, that is, the existing projection required for b.a.u. convergence does not depend on these symbols. In this section, we will prove the following theorem.

\begin{thm}\label{WW_theorem}
Let $G$ be a l.c.s.c group with a two-sided Reiter sequence acting continuously on a finite von Neumann algebra $\CM$ with finite trace $\tau.$ Suppose that $x\in L^p(\CM, \tau)$ and $(\pi, \phi, u)$ is as above. Then ${A}_n^{\pi,\phi,u}(x)$ converges in b.a.u. uniformly over the symbol $(\pi, \phi, u).$ Moreover, if $x$ is in the weak-mixing part of the representation of $\alpha$, then the average converges to $0$ in b.a.u. 
\end{thm}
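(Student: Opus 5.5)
We will follow the strategy of \cite{BGK_WW_2026} and split $L^2(\CM,\tau)$, into which $L^p(\CM,\tau)$ embeds densely for $p\ge 2$ since $\tau$ is finite, according to the Koopman representation $g\mapsto\alpha_g$. The splitting is $L^2(\CM,\tau)=\CH_{c}\oplus\CH_{wm}$, where $\CH_c$ is the closed span of the finite-dimensional $\alpha$-invariant subspaces (compact part) and $\CH_{wm}$ is its orthogonal complement (weak-mixing part). The argument has three steps.

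\textbf{Reduction to a dense set.} The first step is to obtain a maximal inequality that is uniform in the symbol. Since $\abs{\phi}\le 1$, for $x\ge 0$ we have, with the real and imaginary parts of $\phi$ each split into positive and negative parts,
\[
\pm\,\mathrm{Re}\,A_n^{\pi,\phi,u}(x)\le A_n^s(x),
\]
and similarly for the imaginary part. Hence the weak type $(1,1)$ and strong $(p,p)$ inequalities for $(A_n^s)$ from Theorem \ref{weak_type_(1,1)_ergodic} pass to the family $\{A_n^{\pi,\phi,u}\}$, with a projection that does not depend on $(\pi,\phi,u)$; here we pass to the subsequence furnished by the main theorem. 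A standard Banach-principle argument, as in the proof of Theorem \ref{thm: bau and su convergence}, then reduces uniform b.a.u.\ convergence to a dense subset of $L^p$. For that subset we take $\CM$-elements lying in $\CH_c$ or in $\CH_{wm}$.

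\textbf{Compact part.} Suppose $x$ lies in a finite-dimensional invariant subspace, so that $\alpha_g(x)=\sum_k \sigma_k(g)x_k$ with $\sigma$ a finite-dimensional unitary representation. Then $A_n^{\pi,\phi,u}(x)$ is a finite sum of terms $x_k\int_G \psi_u(g)\,s_n(g)\,dm(g)$, where $\psi_u(g)=\sigma_k(g)\phi(\pi(g)u)$ is a matrix coefficient-type function on the compact group $\overline{(\sigma\oplus\pi)(G)}$. The weights $s_n\,dm$ are asymptotically invariant, so their push-forwards to this compact group converge to Haar measure, uniformly over $u$ by equicontinuity of $\phi$. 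This gives norm convergence in $\CM$, and in particular b.a.u.\ convergence.

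\textbf{Weak-mixing part, and the main obstacle.} For $x\in\CH_{wm}$ we need $\sup_{\pi,\phi,u}\norm{e A_n^{\pi,\phi,u}(x)e}\to 0$ for suitable projections $e$. The plan is to prove a van der Corput lemma along two-sided Reiter sequences, and this is where we expect the main difficulty. For a bounded family $(v_g)$ in $L^2$ it should take the form
\[
\limsup_n \Bigl\| \int_G v_g s_n(g)\,dm(g) \Bigr\|^2 \le \limsup_{H}\int\!\!\int \limsup_n \Bigl|\int_G \langle v_{gh},v_{gk}\rangle s_n(g)\,dm\Bigr| \,d\nu_H(h)\,d\nu_H(k),
\]
with $\nu_H$ a second Reiter average. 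Its proof would use the two-sided invariance and unimodularity, shown above, to replace $s_n$ by $s_n*\nu_H$ at small $L^1$ cost, followed by Cauchy–Schwarz. The difficulty is that Cauchy–Schwarz only controls $L^2$ norms, while b.a.u.\ requires an operator-norm bound after cutting by $e$. Following \cite{BGK_WW_2026}, we will apply the lemma to $v_g=\alpha_g(x)\phi(\pi(g)u)$ in the GNS/operator form: we bound $e\,A_n(x)^*A_n(x)\,e$ by averages of $\alpha_g(x^*\alpha_{h}(x))$. This reduces matters to the b.a.u.\ convergence of ordinary averages $A_n^s(x^*\alpha_h(x))$, which converge to $\tau(x^*\alpha_h(x))$ (more precisely to the conditional expectation onto the fixed points) by Theorem \ref{thm: bau and su convergence}.

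The symbol is eliminated using the uniform continuity of $\phi$ and the compactness of $\overline{\pi(G)}$, via a Stone–Weierstrass approximation of $\phi$ by matrix coefficients. Weak mixing then makes the $h,k$-average of the inner products tend to $0$. The delicate points are choosing a single projection that handles countably many $h$ from a dense subset simultaneously, and handling the passage from $L^2$ to $L^p$ for $1\le p<2$, which is done via density and the weak $(1,1)$ estimate.
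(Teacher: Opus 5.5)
Your architecture is the paper's. It consists of a symbol-independent maximal inequality via the domination $\pm\mathrm{Re}\,A_n^{\pi,\phi,u}(x)\le A_n^s(x)$ for $x\ge 0$, the Banach principle, the JdLG splitting, and a van der Corput inequality along the Reiter sequence for the weak-mixing part, with one projection built from a countable dense set of $h$'s. In the compact part you argue differently: you push $s_n\,dm$ forward to the compact group $\overline{(\sigma\oplus\pi)(G)}$ and use that the push-forwards converge weak$^*$ to Haar measure, uniformly over $u$ by equicontinuity. The paper instead applies the mean ergodic theorem for Reiter sequences to $\alpha\otimes\pi$ on $L^2(\CM,\tau)\otimes\C^d$ and pairs with $x_i\otimes\eta$. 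Your route is valid, treats continuous $\phi$ directly and makes the uniformity in $u$ explicit; the paper's is shorter once $\phi$ has been reduced to a matrix coefficient.

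The gap is in the weak-mixing step. With the scalar-weighted $v_g=\alpha_g(x)\phi(\pi(g)u)$, the inner average produced by van der Corput is
\[
\int_G \alpha_g\bigl(x^*\alpha_h(x)\bigr)\,\overline{\phi(\pi(g)u)}\,\phi(\pi(g)\pi(h)u)\,s_n(g)\,dm(g).
\]
Its weight still depends on $g$, so it is again a Wiener--Wintner average for a new element and a new symbol, not $A_n^s(x^*\alpha_h(x))$. The weight cannot simply be pulled out of the $g$-integral, so your claimed reduction to ordinary averages does not follow. Replacing $\phi$ by a scalar matrix coefficient $\langle\,\cdot\,\xi,\eta\rangle$ does not cure this by itself.

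The missing idea is the lift the paper uses. Take $\phi=\langle\,\cdot\,\xi,\eta\rangle$; after Stone--Weierstrass, replace $\pi$ by $\rho\circ\pi$ for a finite-dimensional representation $\rho$ of $\CU_d(\C)$ if necessary. Then apply Lemma~\ref{vand_corput} to the $\CM\otimes M_d(\C)$-valued function $f(g)=\alpha_g(x)e\otimes\pi(g)u$. Now $f(g)^*f(gh)=e\,\alpha_g(x^*\alpha_h(x))\,e\otimes u^*\pi(h)u$ has a $g$-independent matrix factor, so the inner average is $e\,A_n^s(x^*\alpha_h(x))\,e\otimes u^*\pi(h)u$. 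The resulting bound does not depend on $(\pi,u)$, and you recover $A_n^{\pi,\phi,u}(x)e=(\mathrm{id}\otimes\omega_{\xi,\eta})\bigl(\int_G f(g)s_n(g)\,dm(g)\bigr)$.

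Two smaller points remain. First, the final step needs the limit of $A_n^s(x^*\alpha_h(x))$ to be the scalar $\tau(x^*\alpha_h(x))1$, i.e.\ ergodicity, which the paper uses through Proposition~\ref{ergodic_L2} and Lemma~\ref{keylemma}. If the limit is only the conditional expectation $E$ onto the fixed-point algebra, weak mixing of $x$ controls the pairings $\tau(a\,E(x^*\alpha_h(x)))$ but not $\norm{E(x^*\alpha_h(x))}_\infty$, so your parenthetical does not extend the argument. Second, passing from countably many $h$ to all $h$ needs norm continuity of $h\mapsto x^*\alpha_h(x)$. You can arrange this by replacing $x$ with a smoothed element $\int_G\alpha_t(y)\psi(t)\,dm(t)$ with $\psi\in L^1(G)$; such elements are $L^2$-dense in the weak-mixing part.
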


We prove this theorem in multiple steps. 
First, we derive the following theorem from Theorem 
\ref{weak_type_(1,1)_ergodic}, which proves a weak type-$(1,1)$ maximal inequality for the above weighted average ${A}_n^{\pi,\phi,u}(x)$. Since the proof uses techniques similar to \cite[Theorem 4.5]{BGK_WW_2026}, we omit the details. 
\begin{thm}\label{weak_1,1}
    Let $x\in L^1(\CM,\tau)$ and $\lambda >0.$ Then there exists a projection $e_x \in \CM$ such that 
    \begin{align*}
        \sup_n\|e_x{A}_n^{\pi,\phi,u}(x)e_x\|_\infty \leq \lambda ~\text{ and } ~\tau(1-e_x) \lesssim \frac{\|x\|_1}{\lambda}. 
    \end{align*}
\end{thm}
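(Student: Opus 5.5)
The plan is to reduce the weighted maximal inequality to the unweighted one in Theorem \ref{weak_type_(1,1)_ergodic}. The reduction rests on two facts: the weight $\phi(\pi(g)u)$ is bounded by $1$ in modulus, and every $x\in L^1(\CM,\tau)$ splits into positive parts. First I write $x = x_1 - x_2 + i(x_3 - x_4)$ with $x_k \in L^1(\CM,\tau)_+$ and $\|x_k\|_1 \le \|x\|_1$. By linearity of $A_n^{\pi,\phi,u}$ and a union of four projections, it then suffices to treat a positive $x$ (replacing $\lambda$ by $\lambda/4$). Similarly, I write $\phi = \phi_1 - \phi_2 + i(\phi_3 - \phi_4)$ with $0 \le \phi_k \le 1$, for instance via the positive and negative parts of $\mathrm{Re}\,\phi$ and $\mathrm{Im}\,\phi$. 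This reduces matters to weights $w(g) := \phi_k(\pi(g)u) \in [0,1]$.

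For $x \ge 0$ and $0 \le w \le 1$, positivity of each $\alpha_g$ and $s_n \ge 0$ give the pointwise operator inequality
\[
0 \le \int_G \alpha_g(x)\, w(g)\, s_n(g)\, dm(g) \le \int_G \alpha_g(x)\, s_n(g)\, dm(g) = A_n^s(x),
\]
since $\alpha_g(x)(1 - w(g))s_n(g) \ge 0$ integrates to a positive element. This is the key point. The dominating sequence $A_n^s(x)$ does not depend on $(\pi,\phi,u)$. So I take the projection $e = e_x$ furnished by Theorem \ref{weak_type_(1,1)_ergodic} for $x$ and $\lambda/16$; this is the constant $C$ there, which depends only on the fixed coefficients $a_{1,j}$. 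Conjugating by $e$ preserves the order, so $e\,A_n^{w}(x)\,e \le e A_n^s(x) e \le (\lambda/16) e$, and because the left side is positive this gives $\|e A_n^w(x) e\|_\infty \le \lambda/16$ for all $n$.

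Next I recombine. For general $x$ and $\phi$ there are sixteen terms $x_k,\phi_l$. I intersect the four projections $e_{x_k}$, which are independent of $\phi$, and take $e_x := \bigwedge_k e_{x_k}$. This gives $\tau(1-e_x) \le 4C\cdot 16\|x\|_1/\lambda$, and, by the triangle inequality over the sixteen terms, $\sup_n \|e_x A_n^{\pi,\phi,u}(x) e_x\|_\infty \le \lambda$. Note that $e_x \le e_{x_k}$, and $\|e_x y e_x\| \le \|e_{x_k} y e_{x_k}\|$ since $e_x = e_x e_{x_k}$. The result is uniform in $(\pi,\phi,u)$.

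The main obstacle is ensuring that the projection is genuinely independent of the symbol and that the sequence $s_n$ is the one for which Theorem \ref{weak_type_(1,1)_ergodic} applies, i.e.\ the subsequence of the Reiter sequence from Lemma \ref{Adm_Folner_sequence}. Since $G$ is unimodular here, the right-Haar framework of Section 4 applies without change. If the paper's $(s_n)$ is only approximated by such a sequence, I would first use Theorem \ref{sum-maximal}. The same domination applies there, because $|w(h_n - f_n)| \le t$ with $t = \sup_n|h_n - f_n| \in L^1$, so $\int \alpha_g(x)\,t\,dm$ dominates the error terms, again uniformly in the symbol.
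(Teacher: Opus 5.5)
Your proposal is correct and follows the same route as the paper, which says it derives this result from Theorem \ref{weak_type_(1,1)_ergodic} and omits the details as analogous to the F\o lner-sequence case. The mechanism is the one you use: split $x$ into four positive parts and $\phi$ into four $[0,1]$-valued parts, then apply the domination $0\le\int_G\alpha_g(x)w(g)s_n(g)\,dm(g)\le A_n^s(x)$ for $x\ge 0$ and $0\le w\le 1$; this gives a projection $e_x$ depending only on $x$ and $\lambda$, hence uniform in $(\pi,\phi,u)$, which is what Theorem \ref{Banach_Principle} needs later.
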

\subsection{\textbf{JdLG decomposition}}
Establishing the main convergence result requires splitting the space $L^2(\CM, \tau)$ into a weakly mixing subspace $L^2(\CM, \tau)_{wm}$ and a compact subspace $L^2(\CM, \tau)_c$, which is known as JdLG decomposition. In October 2025, Sohail Farhangi,Y.Kuznetsova and Mickey Barthmann proved the following version of JdLG decomposition for the amenable group action on a Banach space.
\begin{thm}\label{JdlG_decomposition}\cite{barthmann-farhangi-kuznetsova2025jacobsdeleeuwglicksbergdecomposition}
Let $G$ be a locally compact amenable group, $m$ a left Haar measure,
$ (F_i)_{i \in I}$ a left F\o lner net, $E$ a Banach space, and
$\pi$ a relatively weakly compact representation of $G$ on $E$.
Then $\pi$ is JdLG-admissible, and it follows that 
\begin{align*}
E_c :&= \{\, \xi \in E : \pi(G)\xi \text{ is relatively compact in } E \,\}  \text{ and }\\
E_{wm} :&= \left\{\, \xi \in E :
\lim_{i} \frac{1}{m(F_i)}
\int_{F_i} \bigl| \langle \pi(g)\xi, x' \rangle \bigr| \, dm(g) = 0
\text{ for all } x' \in E' \right\}.
\end{align*}
\end{thm}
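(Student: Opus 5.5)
The plan is to follow the classical Jacobs--de Leeuw--Glicksberg scheme: first build the decomposition abstractly from a compact semigroup, and only afterwards bring in the F\o lner net $(F_i)$, to identify the second summand with the averaged condition defining $E_{wm}$.

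\textbf{Step 1: the compact semigroup.} Let $\CS$ be the closure of $\pi(G)$ in $\CB(E)$ for the weak operator topology. Since $\pi$ is relatively weakly compact, every orbit $\pi(G)\xi$ is relatively weakly compact, and the representation is bounded on the orbit. So $\CS$ is compact, and by Tychonoff it embeds into $\prod_{\xi}\overline{\pi(G)\xi}^{w}$. By Grothendieck's double-limit criterion, multiplication on $\CS$ is separately continuous, so $\CS$ is a compact semitopological semigroup. By Ellis--Numakura, $\CS$ has a minimal ideal $K(\CS)$. For semitopological semigroups this minimal ideal is a compact group, whose identity $P$ is a projection commuting with $\CS$. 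This yields the splitting
\[
E = PE \oplus \ker P ,
\]
with two identifications:
\begin{itemize}
\item $PE=\{\xi : \CS\xi \text{ is a group orbit}\}$, and this equals $E_c$. The inclusion $\supseteq$ holds because on a relatively norm-compact orbit the weak and norm closures coincide, which forces the minimal ideal to act bijectively. For $\subseteq$, a compact group acting weakly continuously on $PE$ has norm-compact orbits, by the standard Glicksberg--de Leeuw argument.
\item $\ker P=\{\xi : 0\in\overline{\pi(G)\xi}^{w}\}$.
\end{itemize}
This part is standard and requires no amenability.

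\textbf{Step 2: the F\o lner averages, which is the main obstacle.} For $\xi\in E$ and $x'\in E'$, put $f_{\xi,x'}(g)=\langle\pi(g)\xi,x'\rangle$.
\begin{itemize}
\item By Step 1 and Grothendieck's criterion, $f_{\xi,x'}$ is a weakly almost periodic function on $G$.
\item $WAP(G)$ is a translation-invariant C$^*$-subalgebra of $C_b(G)$, so $|f_{\xi,x'}|\in WAP(G)$ as well.
\item $WAP(G)$ carries a unique invariant mean $M$ (Ryll-Nardzewski/Rosen).
\item Any weak$^*$ cluster point of the functionals $\varphi\mapsto\frac1{m(F_i)}\int_{F_i}\varphi\,dm$ is a left-invariant mean on $WAP(G)$, hence equals $M$. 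Therefore
\[
\frac1{m(F_i)}\int_{F_i}|f_{\xi,x'}|\,dm\to M(|f_{\xi,x'}|).
\]
\end{itemize}
Consequently $E_{wm}=\{\xi : M(|f_{\xi,x'}|)=0 \ \forall x'\}$, and the F\o lner net drops out of the picture. Getting this convergence for a general left F\o lner net, possibly with a non-unimodular $G$, is where I expect the most care: one must check left invariance of the limit mean with the correct side of translation.

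\textbf{Step 3: $\ker P = E_{wm}$.}
\begin{itemize}
\item If $\xi\in\ker P$, then $\xi=\xi-P\xi$. Using $M$ restricted to the WAP compactification, $M(|f_{\xi,x'}|)$ equals the integral of $s\mapsto|\langle s\xi,x'\rangle|$ over $K(\CS)$ against Haar measure. This vanishes because $s\xi=sP\xi=0$ for $s\in K(\CS)$.
\item Conversely, if $M(|f_{\xi,x'}|)=0$ for all $x'$, then also $M(|f_{P\xi,x'}|)\le M(|f_{\xi,P'x'}|)=0$. But $f_{P\xi,x'}$ is almost periodic, being a coefficient of a compact group representation. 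For a nonnegative almost periodic function, vanishing mean forces the function to vanish, so $P\xi=0$.
\end{itemize}
Together with Step 1, this gives $E=E_c\oplus E_{wm}$, i.e.\ $\pi$ is JdLG-admissible.
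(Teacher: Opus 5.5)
The paper gives no proof of this statement (it is quoted from Barthmann--Farhangi--Kuznetsova), so your argument has to stand on its own. Steps 2 and 3 are essentially sound. Step 1, however, has a genuine gap exactly where the theorem's first conclusion lives.

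You assert that the minimal ideal of a compact semitopological semigroup is a compact group. That is false in general. On $\C^2$, let $P_1$ be the projection onto $\C e_1$ along $\C e_2$, and $P_2$ the projection onto $\C e_1$ along $\C(e_1-e_2)$. Then $P_iP_j=P_j$, so $\{I,P_1,P_2\}$ is a finite, hence compact semitopological, operator semigroup. Its minimal ideal $\{P_1,P_2\}$ contains two idempotents and is not a group. Moreover $\{\xi: 0\in\mathcal{S}\xi\}=\C e_2\cup\C(e_1-e_2)$ is not even a subspace, so the decomposition genuinely fails there.

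That $K(\mathcal{S})$ is a group is exactly the content of JdLG-admissibility, so Step 1 assumes the main claim. Everything downstream also relies on it:
\begin{itemize}
\item the centrality $PS=SP$;
\item the description of $\ker P$, where you need some $R\in K(\mathcal{S})$ with $RPT=P$;
\item the inclusion $\pi(G)\xi\subseteq K(\mathcal{S})\xi$ for $\xi\in PE$;
\item the Haar-measure formula in Step 3.
\end{itemize}

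The missing idea is that admissibility comes from the group structure through invariant means, so the material of your Step 2 must come first. Each $F\circ\pi$ with $F\in C(\mathcal{S})$ lies in $WAP(G)$. Hence the two-sided invariant mean $M$ pulls back to a probability measure $\nu$ on $\mathcal{S}$ invariant under left and right multiplication by $\pi(G)$. It is then invariant under all of $\mathcal{S}$, by density, separate continuity, and Grothendieck's criterion (on the weakly compact set $\{F(s\,\cdot): s\in\mathcal{S}\}$ the pointwise and weak topologies agree, which lets you pass to limits of nets).

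Left invariance gives $s\,\mathrm{supp}\,\nu=\mathrm{supp}\,\nu$ for every $s\in\mathcal{S}$. So $\mathrm{supp}\,\nu\subseteq s\mathcal{S}$ lies in every right ideal, and there is a unique minimal right ideal. Right invariance likewise gives a unique minimal left ideal. Hence $K(\mathcal{S})$ is simultaneously a minimal left and a minimal right ideal, i.e.\ a group. Equivalently, $\mathcal{S}$ is a continuous homomorphic image of $G^{wap}$, whose minimal ideal is a group.

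Your remark that this needs no amenability is correct in conclusion. What it does need is that $\pi(G)$ is a group; for semigroup representations it can fail, as the example shows. With this argument inserted before the identifications of $PE$ and $\ker P$, the rest of your route works.

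Two smaller points:
\begin{itemize}
\item The ``care'' you anticipate in Step 2 is not an issue. Averages over a left F\o lner net with left Haar measure are asymptotically invariant under $f\mapsto f(g^{-1}\cdot)$ with no modular factor, and left-invariant means on $WAP(G)$ are unique.
\item WOT multiplication is separately continuous on all of $\mathcal{B}(E)$, so Grothendieck's criterion is not needed for that step.
\end{itemize}
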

$E_c$ is referred to as the \emph{ compact } part and  $ E_{wm}$ is referred to as the \emph{weak mixing} part.
 When  $E=\CH$, a Hilbert space, and $\pi$ is a unitary representation of $G$ on $\CH$. Since $\| \pi(g) \|=1$ for all $g \in G$, and the closed unit ball of $\CB(\CH)$ under WOT is compact, and the representation is JdLG admissible by  \cite[Example 16.25, Chapter 16]{Eisner2015}. Moreover, we have $ \CH= \CH_c \oplus \CH_{wm}$.\\
Another version of the JdlG decomposition shows that $\CM$ decomposes into compact and weakly mixing parts, denoted by $\CM_c$ and $\CM_{wm}$ respectively, given by
$$\CM_c = \CM \cap L^2(\CM,\tau)_c \quad \text{and} \quad \CM_{wm} = \CM \cap L^2(\CM,\tau)_{wm}.$$ We recall the following theorem from \cite{BGK_WW_2026}.
    \begin{thm}\label{JdLG}
Let $ \CM$ be a finite von Neumann algebra with a f.n trace $\tau$ and $ (\CM, G, \alpha) $ be a $\tau$-preserving non-commutative dynamical system. Then $\CM$ can be written as 
$\CM=\CM_{c}\oplus \CM_{wm}$ such that 
\[ \overline{ \CM \cap L^{2}_{c}(\CM,\tau)} = L_{c}^{2}(\CM,\tau)    ~\text{ and }~  \overline{ \CM \cap L^{2}_{\text{wm}}(\CM,\tau) } ~=  L^{2}_{\text{wm}}(\CM,\tau) .\]
\end{thm}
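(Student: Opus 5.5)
We deduce Theorem \ref{JdLG} from the Hilbert space decomposition $L^2(\CM,\tau)=L^2_c(\CM,\tau)\oplus L^2_{wm}(\CM,\tau)$ given by Theorem \ref{JdlG_decomposition}. We apply it to the unitary representation $U_g\hat x=\widehat{\alpha_g(x)}$ on $L^2(\CM,\tau)$; it is unitary because $\alpha$ is $\tau$-preserving, and strongly continuous by the continuity assumption on the action. Since $\tau$ is finite, $\CM\subseteq L^2(\CM,\tau)$ densely. Let $P_c$ and $P_{wm}=1-P_c$ be the orthogonal projections onto the two summands. The key structural fact is that $P_c$ is the $\tau$-preserving conditional expectation onto a $G$-invariant von Neumann subalgebra. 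Concretely, set
\[
\CM_c:=\{x\in\CM:\ \{\alpha_g(x)\}_{g\in G}\ \text{is relatively compact in}\ \|\cdot\|_2\}.
\]
We then show that $\CM_c$ is a $G$-invariant unital $*$-subalgebra and that it is weak$^*$ closed.

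Closure under $*$ and sums is immediate. For products we use
\[
\|\alpha_g(x)\alpha_g(y)-ab\|_2\le\|x\|_\infty\|\alpha_g(y)-b\|_2+\|\alpha_g(x)-a\|_2\|y\|_\infty,
\]
where a posteriori $a,b\in\CM$ with norms bounded by $\|x\|,\|y\|$ (if needed, first take $a,b$ to be finite $\varepsilon$-net points chosen from the orbit itself). Weak$^*$ closedness is the delicate part. On bounded sets the strong topology coincides with the $\|\cdot\|_2$ topology, and $L^2_c$ is $\|\cdot\|_2$-closed. Hence $\CM\cap L^2_c$ is $\|\cdot\|_2$-closed in the unit ball, and by Kaplansky density it is a von Neumann algebra. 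We also need $\CM_c=\CM\cap L^2_c$; the inclusion $\subseteq$ is by definition, and $\supseteq$ holds because orbit compactness is measured in $\|\cdot\|_2$. Let $\mathbb E$ be the trace-preserving conditional expectation onto $\CM_c$. Its $L^2$-extension is the projection onto $\overline{\CM_c}^{\|\cdot\|_2}$. We must show this closure equals all of $L^2_c$; this is the \textbf{main obstacle}.

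For this we use the standard argument via finite-dimensional invariant subspaces. Since $L^2_c$ is spanned by finite-dimensional $G$-invariant subspaces (eigen-blocks of the compact part), take $\xi$ in such a block $V$. We truncate by spectral projections: write $\xi=v|\xi|$ and cut with $\chi_{[0,N]}(|\xi|)$. To remain inside $L^2_c$, one has to check that $L^2_c\cap L^\infty$-functional calculus preserves compactness. The cleanest route is the following. Elements of $L^2_c$ whose orbit lies in a finite-dimensional space form an algebra under the (unbounded) product, their adjoints again lie there, and bounded continuous functions of $|\xi|$ with $f(0)=0$ are $\|\cdot\|_2$-limits of polynomials in $\xi^*\xi$ on the relevant spectrum. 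Thus $f(|\xi|)\in\CM_c$, and $\xi\chi_{[0,N]}(|\xi|)\to\xi$ in $L^2$. This yields $\overline{\CM\cap L^2_c}=L^2_c$.

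Finally we set $\CM_{wm}:=\{x-\mathbb E(x):x\in\CM\}=\ker\mathbb E=\CM\cap L^2_{wm}$; the last equality holds since $\ker P_c=L^2_{wm}$ and $\mathbb E=P_c|_\CM$. Then $\CM=\CM_c\oplus\CM_{wm}$ via $x=\mathbb E x+(x-\mathbb E x)$. Density of $\CM_{wm}$ in $L^2_{wm}$ follows because $P_{wm}$ is continuous and $\CM$ is dense in $L^2$: $P_{wm}(\CM)=\CM_{wm}$ is dense in $P_{wm}L^2=L^2_{wm}$. Similarly $\CM_c=P_c(\CM)$ is dense in $L^2_c$. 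This last observation actually makes the truncation step above unnecessary, provided we have first shown $P_c(\CM)\subseteq\CM$. That inclusion is the genuine content of the theorem. I would establish it by showing that $P_c$ commutes with the modular structure: $L^2_c$ is invariant under $J$ (adjoint) and under left and right multiplication by $\CM_c$, so $P_c$ is the projection onto $\overline{\CM_c}$, i.e.\ the Jones projection of $\CM_c$, which maps $\CM$ into $\CM_c$.
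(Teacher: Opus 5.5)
\textbf{Verdict: genuine gap at the density step.}

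Your reduction is correct. $\CM_c=\CM\cap L^2_c(\CM,\tau)$ is a $G$-invariant von Neumann subalgebra; your product estimate with net points taken from the orbits, and the Kaplansky argument, are fine. \emph{Once} $\overline{\CM_c}^{\|\cdot\|_2}=L^2_c(\CM,\tau)$ is known, $P_c$ is the $L^2$-extension of the trace-preserving conditional expectation $\mathbb{E}$ onto $\CM_c$. Then $P_c(\CM)=\CM_c$, and the decomposition and both density statements follow as you say. But neither of your two arguments proves that density, and it is the actual content of the theorem.

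\textbf{The polynomial route fails.} For unbounded $\xi\in L^2$, the operator $\xi^*\xi$ is only in $L^1$. So polynomials in $\xi^*\xi$ are in general not in $L^2$ at all. Also, a bounded continuous $f$ cannot be approximated by polynomials on the unbounded spectrum of $|\xi|$. Even granting $f(|\xi|)\in\CM_c$, nothing follows for $\xi\chi_{[0,N]}(|\xi|)$. Since $\xi$ is unbounded, $\|\xi(y_1-y_2)\|_2$ is not controlled by $\|y_1-y_2\|_2$. Since $|\xi|$ is in general not $\alpha$-invariant, there is no reason for $\xi\chi_{[0,N]}(|\xi|)$ to lie in $V$ or in any finite-dimensional invariant subspace.

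\textbf{The fallback in your last paragraph is circular.} Proving $y\xi\in L^2_c$ for $y\in\CM_c$ and unbounded $\xi\in L^2_c$ runs into the same uncontrolled term $\|(\alpha_g(y)-\alpha_h(y))\alpha_h(\xi)\|_2$. Extending from bounded $\xi$ by continuity presupposes the density you want. Moreover, being a $J$-invariant $\CM_c$-bimodule does not single out $L^2(\CM_c)$; all of $L^2(\CM,\tau)$ is such a bimodule. So identifying $P_c$ with the Jones projection of $\CM_c$ is exactly the unproved claim.

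\textbf{The missing idea is to truncate by \emph{invariant} projections.}
\begin{enumerate}
\item Let $V$ be a finite-dimensional $G$-invariant subspace with orthonormal basis $\xi_1,\dots,\xi_d$. Then $\alpha_g(\xi_i)=\sum_j u_{ji}(g)\xi_j$ with $(u_{ji}(g))$ unitary.
\item Set $a:=\sum_i\xi_i^*\xi_i\in L^1(\CM,\tau)_+$. Unitarity of $(u_{ji}(g))$ gives $\alpha_g(a)=a$.
\item Hence $p_N:=\chi_{[0,N]}(a)$ is $\alpha$-invariant. Then $\alpha_g(\xi_ip_N)=\sum_j u_{ji}(g)\xi_jp_N$, so $Vp_N$ is again finite-dimensional and invariant.
\item From $p_N\xi_i^*\xi_ip_N\le p_Nap_N\le N$ you get $\xi_ip_N\in\CM$.
\item Finally, $\|\xi_i-\xi_ip_N\|_2^2\le\tau(a\chi_{(N,\infty)}(a))\to0$.
\end{enumerate}
Such $V$ span a dense subspace of $L^2_c$, so this yields $\overline{\CM\cap L^2_c}=L^2_c$ and completes your argument.

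Your instinct that functional calculus preserves compact orbits can also be made rigorous, but not through polynomial approximation. Instead, use the $\|\cdot\|_2$-Lipschitz property of $t\mapsto\max(-N,\min(t,N))$ on self-adjoint elements, together with the $J$-invariance of $L^2_c$.
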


One of the most useful results in classical and non-commutative ergodic theory is the van der Corput inequality. Although various versions exist in the literature, a non-commutative analogue was first proved in \cite{NSZJOT2003}. For our purpose we need the following version of the inequality. 
\begin{lem}[van der Corput inequality along a Reiter sequence]\label{vand_corput}
    Let $f: G \to \CM$ be a bounded measurable function and $(s_n)$ be a right Reiter sequence. Then we have 
    \begin{align}\label{vand_cor_inequality}
        \bigg\|\int_G f(g)s_n(g)dm(g)\bigg\|^2 &\leq \bigg\|\int_G\int_G\bigg(\int_Gf(g)^*f(gh_1^{-1}h_2)s_n(g)dm(g)\bigg)s_k(h_1)s_k(h_2)dm(h_1)dm(h_2)\bigg\|\nonumber\\ &+ 3\|f\|_\infty^2 \int_G\|s_n -\rho_{h^{-1}}(s_n)\|_1s_k(h)dm(h) + \|f\|_\infty^2 \Bigg(\int_G\|s_n -\rho_{h^{-1}}(s_n)\|_1s_k(h)dm(h)\Bigg)^2.
    \end{align}
\end{lem}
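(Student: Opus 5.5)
The plan is to compare $I_n:=\int_G f(g)s_n(g)\,dm(g)$ with a doubly averaged quantity in which $f$ is also smoothed by $s_k$. Then apply the operator Cauchy--Schwarz (Kadison) inequality, and finally undo the smoothing inside the resulting quadratic expression. Throughout, $\|\cdot\|$ is the operator norm, and the integrals are weak$^*$ integrals of bounded $\CM$-valued functions. Set
\[
\delta_{n,k}:=\int_G\|s_n-\rho_{h^{-1}}(s_n)\|_1\,s_k(h)\,dm(h).
\]

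\textbf{Step 1 (smoothing).} Fix $h\in G$. By right invariance of $m$ (substitute $g\mapsto gh^{-1}$) we get
\[
\int_G f(gh)s_n(g)\,dm(g)=\int_G f(g)\,\rho_{h^{-1}}(s_n)(g)\,dm(g).
\]
Hence
\[
\Big\|I_n-\int_G f(gh)s_n(g)\,dm(g)\Big\|\le\|f\|_\infty\|s_n-\rho_{h^{-1}}(s_n)\|_1 .
\]
Integrate against $s_k(h)\,dm(h)$, using $\int s_k=1$, and use Fubini. This gives $I_n=J_{n,k}+R$ with $\|R\|\le\|f\|_\infty\delta_{n,k}$, where
\[
J_{n,k}=\int_G s_n(g)\,v(g)\,dm(g),\qquad v(g)=\int_G f(gh)s_k(h)\,dm(h).
\]
Note that $\|J_{n,k}\|\le\|f\|_\infty$.

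\textbf{Step 2 (Cauchy--Schwarz).} Since $s_n\,dm$ is a probability measure, the operator convexity of $x\mapsto x^*x$ gives
\[
J_{n,k}^*J_{n,k}\le\int_G s_n(g)\,v(g)^*v(g)\,dm(g).
\]
This can be checked by expanding $\int s_n(g)\,(v(g)-J)^*(v(g)-J)\,dm(g)\ge0$. Therefore $\|J_{n,k}\|^2\le\big\|\int_G s_n(g)v(g)^*v(g)\,dm(g)\big\|$. Expanding $v^*v$, this equals the norm of
\[
\int\!\!\int\Big(\int_G f(gh_1)^*f(gh_2)\,s_n(g)\,dm(g)\Big)s_k(h_1)s_k(h_2)\,dm(h_1)\,dm(h_2).
\]

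\textbf{Step 3 (undoing the shift).} In the inner integral substitute $g\mapsto gh_1^{-1}$, again using right invariance. It becomes
\[
\int_G f(g)^*f(gh_1^{-1}h_2)\,s_n(gh_1^{-1})\,dm(g).
\]
Replacing $s_n(gh_1^{-1})=\rho_{h_1^{-1}}(s_n)(g)$ by $s_n(g)$ costs at most $\|f\|_\infty^2\|s_n-\rho_{h_1^{-1}}(s_n)\|_1$. Averaging over $h_1,h_2$ then gives
\[
\|J_{n,k}\|^2\le T+\|f\|_\infty^2\delta_{n,k},
\]
where $T$ is the first term on the right of \eqref{vand_cor_inequality}.

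\textbf{Step 4 (assembly).} Combining the previous steps,
\[
\|I_n\|^2\le(\|J_{n,k}\|+\|f\|_\infty\delta_{n,k})^2=\|J_{n,k}\|^2+2\|f\|_\infty\delta_{n,k}\|J_{n,k}\|+\|f\|_\infty^2\delta_{n,k}^2 .
\]
Using Step 3 for the first term and $\|J_{n,k}\|\le\|f\|_\infty$ for the middle term yields
\[
\|I_n\|^2\le T+3\|f\|_\infty^2\delta_{n,k}+\|f\|_\infty^2\delta_{n,k}^2,
\]
which is exactly \eqref{vand_cor_inequality}.

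The main obstacle is justifying the operator-valued manipulations: Fubini for weak$^*$ integrals, measurability of $(g,h)\mapsto f(gh)$, and the operator Cauchy--Schwarz inequality for integrals. I would handle all three by pairing with normal states $\omega\in\CM_*$ and reducing to scalar integrals.
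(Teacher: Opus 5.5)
Your proposal is correct and follows essentially the same route as the paper: you smooth $I_n$ by $s_k$ at cost $\|f\|_\infty\delta_{n,k}$, square using $\|J_{n,k}\|\le\|f\|_\infty$, apply the Kadison--Schwarz inequality to the $s_n$-average of $v$, and then substitute $g\mapsto gh_1^{-1}$ and replace $\rho_{h_1^{-1}}(s_n)$ by $s_n$ at cost $\|f\|_\infty^2\delta_{n,k}$. Your explicit checks of the operator Cauchy--Schwarz step (via expanding $\int s_n(v-J)^*(v-J)\ge 0$) and of the bound $\|J_{n,k}\|\le\|f\|_\infty$ are both left implicit in the paper's argument, so stating them is a useful addition.
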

\begin{proof}
We begin by evaluating the following,
\begin{align*}
    \bigg\|\int_G f(g)s_n(g)dm(g)-&\int_G\int_Gf(gh)s_n(g)s_k(h)dm(g)dm(h)\bigg\| \\
    &\leq \bigg\|\int_G \int_G f(g)s_n(g)s_k(h)dm(g)dm(h)-\int_G\int_Gf(g)s_n(gh^{-1})s_k(h)dm(g)dm(h)\bigg\|\\
    &\leq \bigg\|\int_G\bigg(\int_Gf(g)s_n(g)s_k(h)dm(g) - \int_G f(g)s_n(gh^{-1})s_k(h)dm(g)\bigg)dm(h)\bigg\|\\
   &\leq \int_G\int_G \|f\|_\infty s_k(h)|s_n(g) - s_n(gh^{-1})|dm(g)dm(h)\\
   &= \|f\|_\infty\int_G  \|s_n-\rho_{h^{-1}} (s_n)\|_1s_k(h)dm(h).
\end{align*}
Therefore, we obtain 
\begin{align}\label{A}
    \bigg\|\int_G f(g)s_n(g)dm(g)\bigg\|^2 \leq \bigg\|\int_G\int_G&f(gh)s_n(g)s_k(h)dm(g)dm(h)\bigg\|^2+ 2\|f\|_\infty^2\int_G  \|s_n-\rho_{h^{-1}} (s_n)\|_1s_k(h)dm(h)\nonumber\\
    &+ \|f\|_\infty^2 \Bigg(\int_G  \|s_n-\rho_{h^{-1}} (s_n)\|_1s_k(h)dm(h)\Bigg)^2.
\end{align}
We now estimate the first term separately, 
\begin{align*}
&\bigg\|\int_G\int_Gf(gh)s_n(g)s_k(h)dm(g)dm(h)\bigg\|^2 \\
  = & \bigg\|\int_G\bigg(\int_Gf(gh)s_k(h)dm(h)\bigg)s_n(g)dm(g)\bigg\|^2 \\
  &\leq \bigg\|\int_G \bigg|\int_Gf(gh)s_k(h)dm(h) \bigg|^2s_n(g)dm(g)\bigg\|\\
&\leq\bigg\|\int_G\int_G\int_Gf(gh_1)^*f(gh_2)s_k(h_1)s_k(h_2)dm(h_1)dm(h_2)s_n(g)dm(g)\bigg\|\\
&=\bigg\|\int_G\int_G\bigg(\int_Gf(gh_1)^*f(gh_2)s_n(g)dm(g)\bigg)s_k(h_1)s_k(h_2)dm(h_1)dm(h_2)\bigg\|\\
&=\bigg\|\int_G\int_G\bigg(\int_Gf(g)^*f(gh_1^{-1}h_2)s_n(gh_1^{-1})dm(g)\bigg)s_k(h_1)s_k(h_2)dm(h_1)dm(h_2)\bigg\|\\
&\leq\bigg\|\int_G\int_G\bigg(\int_Gf(g)^*f(gh_1^{-1}h_2)s_n(g)dm(g)\bigg)s_k(h_1)s_k(h_2)dm(h_1)dm(h_2)\bigg\|\\
&\qquad + \|f\|_{\infty}^2\int_G\|s_n-\rho_{h_1^{-1}}(s_n)\|s_k(h_1)dm(h_1). 
\end{align*}
Hence, the lemma follows from Ineq. \ref{A} and the above estimate. 
\end{proof}
By Theorem \ref{weak_1,1}, the weighted average ${A}_n^{\pi,\phi,u}$ is of weak type $(1,1)$, while its strong type $(\infty,\infty)$ bound is immediate. As a consequence, it follows that ${A}_n^{\pi,\phi,u}$ is strong $(p,p)$ for all $1 < p < \infty$ by interpolation. Now the next theorem is Banach Principle, which is analogous to its classical counterpart which provides a link between the maximal inequality and the b.a.u. convergence. We refer to \cite[Theorem 2.1]{Lit2014} for the proof. 
\begin{thm}\label{Banach_Principle}
    For $1 \leq p < \infty$ the following set 
    $$\mathcal{C}=\{x \in L^p(\CM, \tau):{A}_n^{\pi,\phi,u}(x) \text{ converges in b.a.u. uniformly in }(\pi, \phi, u) \}$$
    is closed in $L^p(\CM,\tau).$
\end{thm}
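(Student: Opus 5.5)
The set $\mathcal{C}$ is closed by an $\varepsilon/3$ argument that combines the weak type $(1,1)$ estimate of Theorem \ref{weak_1,1} with its interpolated strong $(p,p)$ form. The key point is that both bounds are uniform in the symbol $(\pi,\phi,u)$. Take $x_k\in\mathcal{C}$ with $\|x_k-x\|_p\to 0$, and fix $\varepsilon>0$. It suffices to show that for every $\varepsilon>0$ there is a projection $e$ with $\tau(1-e)<\varepsilon$ such that $(e A_n^{\pi,\phi,u}(x)e)_n$ is uniformly Cauchy in operator norm, uniformly over all symbols. Completeness then gives a b.a.u. limit, and a standard diagonal/intersection argument over $\varepsilon=2^{-l}$ identifies a single limit in $L^0$.

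\emph{Step 1 (uniform maximal control of the tail).} Pass to a subsequence with $\|x-x_k\|_p\le 4^{-k}\delta$. Since $\tau$ is finite, we have $L^p\subseteq L^1$ with $\|y\|_1\le \tau(1)^{1-1/p}\|y\|_p$. Apply Theorem \ref{weak_1,1} to $y_k=x-x_k$, splitting into real/imaginary and positive/negative parts; the averages are linear, and positive parts are dominated after the decomposition. With threshold $\lambda_k=2^{-k}$ this gives projections $f_k$ such that
$$\sup_{n,(\pi,\phi,u)}\|f_kA_n^{\pi,\phi,u}(x-x_k)f_k\|_\infty\lesssim 2^{-k},\qquad \tau(1-f_k)\lesssim 2^{k}\|x-x_k\|_1\lesssim 2^{-k}\delta .$$
The crucial point is that $f_k$ does not depend on $(\pi,\phi,u)$; this is exactly what Theorem \ref{weak_1,1} provides. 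For general (non-self-adjoint) pieces I would use the standard trick: if $e y_j e$ is controlled for the positive parts, then the linear combination is controlled with a constant factor.

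\emph{Step 2 (convergence of the approximants).} Since $x_k\in\mathcal{C}$, choose $q_k$ with $\tau(1-q_k)<2^{-k}\delta$ such that $q_kA_n^{\pi,\phi,u}(x_k)q_k$ converges in norm uniformly in $(\pi,\phi,u)$. Set $e=\bigwedge_k(f_k\wedge q_k)$; then $\tau(1-e)\lesssim\delta$.

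\emph{Step 3 (combining).} For each $k$,
$$\|e(A_n-A_m)(x)e\|\le \|eA_n(x_k)e-eA_m(x_k)e\|+2\sup_j\|eA_j(x-x_k)e\|.$$
The second term is at most $\lesssim 2^{-k}$ uniformly, because $e\le f_k$ and $e(\cdot)e=e f_k(\cdot)f_k e$. The first term tends to $0$ uniformly in the symbols, since $e\le q_k$. Hence the sequence is uniformly Cauchy, which proves $x\in\mathcal{C}$.

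The main obstacle is Step 1. There one must pass from the weak type bound on positive elements to a bound on arbitrary $x-x_k$ with a single projection, keeping it uniform over the uncountable family of symbols. I would lean on the fact that Theorem \ref{weak_1,1} already supplies a symbol-independent projection, inherited from Theorem \ref{weak_type_(1,1)_ergodic} through the domination $|\phi|\le1$. This is as in \cite[Theorem 2.1]{Lit2014}.
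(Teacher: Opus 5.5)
Your argument is correct and is essentially the proof the paper defers to, \cite[Theorem 2.1]{Lit2014}. There, as in your proposal, a weak type $(1,1)$ bound that is uniform in $(\pi,\phi,u)$ (coming from Theorem \ref{weak_type_(1,1)_ergodic} via $|\phi|\le 1$, with $L^p\subseteq L^1$ because $\tau$ is finite) gives bilateral uniform equicontinuity in measure at zero. An $\varepsilon/3$ argument with summable projection defects then shows that the averages of $x$ are b.a.u.\ Cauchy uniformly in the symbols, hence b.a.u.\ convergent uniformly in the symbols.
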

The next proposition is a version of \cite[Proposition 4.3]{BGK_WW_2026} stated for Reiter sequences instead of F\o lner sequences. Since the argument is analogous, we omit the proof.
\begin{prop}\label{ergodic_L2}
    Let $(\CM, G, \alpha)$ be as above and the action $\alpha$ is ergodic (i.e. $\CM^G=\C1$). Suppose $(h_n)_n$ is a left Reiter sequence and $m$ is a left Haar measure. Then for every $x\in L^2(\CM,\tau)$,
    $$A^h_n(x) \to \tau(x)1$$ almost uniformly (a.u.) and in $\|\cdot\|_2$.
\end{prop}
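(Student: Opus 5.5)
The plan is the standard two-step scheme: prove the convergence on a dense subspace of $L^2(\CM,\tau)$, and then pass to the closure using the maximal inequality. The statement asks for $A^h_n(x)\to\tau(x)1$ both in $\|\cdot\|_2$ and almost uniformly, for a left Reiter sequence with left Haar measure and an ergodic action.

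\emph{Step 1 (mean ergodic part).} Since $\alpha$ is $\tau$-preserving, it induces a unitary representation on $L^2(\CM,\tau)$. Ergodicity ($\CM^G=\C 1$) together with finiteness of $\tau$ means the fixed-point space in $L^2$ is $\C 1$. Hence the orthogonal projection onto it is $P(x)=\tau(x)1$, assuming $\tau(1)=1$ after normalization. We then split
\[
L^2(\CM,\tau)=\C 1\oplus\overline{\mathrm{span}}\{y-\alpha_t(y): y\in\CM,\ t\in G\}.
\]
This holds because the orthogonal complement of the coboundaries consists exactly of the invariant vectors. On constants we have $A^h_n(1)=1$. For a coboundary $x=y-\alpha_t(y)$, left invariance of $m$ gives
\[
A^h_n(x)=\int_G\alpha_g(y)\bigl(h_n(g)-\lambda_t(h_n)(g)\bigr)\,dm(g),
\]
where $\lambda_t(h_n)(g)=h_n(t^{-1}g)$. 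Therefore $\|A^h_n(x)\|_\infty\le\|y\|_\infty\|h_n-\lambda_t h_n\|_1\to0$ by the Reiter property. This is the computation in the proof of Theorem \ref{thm: bau and su convergence}, mirrored to the left. Operator-norm convergence to $0$ gives both $\|\cdot\|_2$ and uniform convergence on this dense subspace. Since $\|A^h_n\|_{2\to2}\le 1$, a $3\varepsilon$ argument yields $\|\cdot\|_2$ convergence for every $x\in L^2$.

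\emph{Step 2 (almost uniform part).} For a.u. convergence on all of $L^2$, I would invoke Theorem \ref{thm: bau and su convergence}(ii) with $p=2$ and $p_0<1$. This only needs a strong type $(p,p)$ bound for $p$ in a neighbourhood of $2$. That bound follows from Theorem \ref{weak_type_(1,1)_ergodic} and interpolation, which is legitimate once we pass to the subsequence produced by Theorem \ref{main-thm} and Lemma \ref{Adm_Folner_sequence}. If the statement is read as holding for the given Reiter sequence rather than a subsequence, I would instead argue directly on the dense part: there the convergence is already uniform in norm, so a.u. holds with $e=1$. I would then use the closedness of the a.u.-convergence set, via a Banach-principle argument like Theorem \ref{Banach_Principle}, which again requires a maximal inequality for $(A_n^h)$.

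\emph{Main obstacle.} The delicate point is exactly this maximal inequality for the full sequence versus a subsequence. For the omitted argument I would follow \cite[Proposition 4.3]{BGK_WW_2026} and assume, as is implicit in this section, that $(h_n)$ has already been replaced by the good subsequence for which Theorem \ref{weak_type_(1,1)_ergodic} holds. The left/right conventions also need care: one must check that the admissible F\o lner machinery applies to left Reiter sequences with left Haar measure. The Remark after Lemma \ref{finite-sum-aprox} and the symmetric statements in Section 3 suggest this goes through by the same proofs.
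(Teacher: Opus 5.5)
Your scheme (constants plus coboundaries as a dense subspace, then a maximal inequality) is the one the paper has in mind when it omits this proof, but the coboundary identity in Step 1 is false for non-abelian $G$. Since $\alpha_g\alpha_t=\alpha_{gt}$, one has $A^h_n(\alpha_t(y))=\int_G\alpha_{gt}(y)h_n(g)\,dm(g)$. Left invariance of $m$ instead gives $\int_G\alpha_g(y)\lambda_t(h_n)(g)\,dm(g)=\int_G\alpha_{tg}(y)h_n(g)\,dm(g)=\alpha_t(A^h_n(y))$. So the left Reiter property only yields $\norm{\alpha_t(A^h_n(y))-A^h_n(y)}_\infty\to0$, which is asymptotic invariance of the averages, not $A^h_n(y-\alpha_t(y))\to0$. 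The computation of Theorem \ref{thm: bau and su convergence} cannot be mirrored, because the composition law of $\alpha$ is not mirrored. Absorbing $t$ requires the substitution $g\mapsto gt$, i.e.\ right invariance of $m$ and $\norm{h_n-\rho_{t^{-1}}(h_n)}_1\to0$.

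This is not just a defect of the argument: for a sequence that is merely left Reiter, the conclusion fails. Let $G$ be the group of finitely supported permutations of $\N$; it is countable and locally finite, hence amenable. Let it act on $\CM=L^\infty(\{0,1\}^{\N})$, with the Bernoulli$(\tfrac12)$ product measure, via $(g\omega)_i=\omega_{g^{-1}(i)}$ and $\alpha_g(f)=f\circ g^{-1}$. This action is ergodic by the Hewitt--Savage zero--one law. Put $G_n=\mathrm{Sym}\{0,\dots,n\}$, let $\sigma_n$ be the transposition of $0$ and $n+1$, and set $h_n=\mathbf{1}_{G_n\sigma_n}/\abs{G_n}$. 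Then $\lambda_t(h_n)=h_n$ for $t\in G_n$, so $(h_n)$ is left Reiter; note also that $G$ admits the two-sided Reiter sequence $\mathbf{1}_{G_n}/\abs{G_n}$. For $f_i(\omega)=\omega_i-\tfrac12$ one checks $\alpha_g(f_i)=f_{g(i)}$. Since $G_n$ fixes $n+1$, this gives $A^h_n(f_0)=f_{n+1}$, which has $\norm{\cdot}_2=\tfrac12$. Hence $A^h_n(f_0)$ converges to $\tau(f_0)1=0$ neither in $\norm{\cdot}_2$ nor almost uniformly. The proof must therefore use the right Reiter property with respect to a right Haar measure. In Section 5 this is available, since $G$ is unimodular and the sequence is two-sided. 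With that hypothesis your Step 1 works verbatim, with $\rho_{t^{-1}}(h_n)$ in place of $\lambda_t(h_n)$.

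Step 2 has a second gap, at the exponent $p=2$. Theorem \ref{thm: bau and su convergence} requires $p_0\ge1$ and gives a.u.\ convergence only for $p>2p_0\ge2$, so $p_0<1$ is not admissible. Its proof bounds the column maximal function in $L^p$, via Kadison's inequality, by the strong type $(p/2,p/2)$ estimate. At $p=2$ that would be a strong type $(1,1)$ bound, which is not available; strong type near $2$ is not what is needed. The case $p=2$ has to come from weak type $(1,1)$ directly. By the Kadison--Schwarz inequality, $A^h_n(x)^*A^h_n(x)\le A^h_n(x^*x)$. Applying weak type $(1,1)$ to $x^*x\in L^1(\CM)_+$ gives, for each $\lambda>0$, a projection $e$ with $\tau(1-e)\lesssim\norm{x}_2^2/\lambda$ and $\sup_n\norm{A^h_n(x)e}_\infty\le\sqrt{\lambda}$.

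To conclude, approximate $x-\tau(x)1$ in $\norm{\cdot}_2$ by combinations $x_k$ of coboundaries. Let $e_k$ be the projection just obtained for $x-\tau(x)1-x_k$ and $\lambda_k\downarrow0$, with the approximation chosen fine enough that $\tau(1-e_k)<\eps 2^{-k}$, and set $e=\bigwedge_k e_k$. Then $\limsup_n\norm{(A^h_n(x)-\tau(x)1)e}_\infty\le\sqrt{\lambda_k}$ for every $k$, which is a.u.\ convergence. For the later application only bounded $x$ occur, and since $\tau$ is finite, Theorem \ref{thm: bau and su convergence}(ii) with some $p>2$ would already suffice.

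You are right that all of this needs the maximal inequality, and hence the subsequence of Theorem \ref{main-thm} and Lemma \ref{Adm_Folner_sequence}. That is a genuine restriction and should be stated as a hypothesis. Almost uniform convergence can fail along suitable F\o lner sequences already for $G=\Z$, and their normalized indicators are Reiter sequences.
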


\begin{rem}\label{simplification}
    Since $\CM\cap L^2(\CM,\tau)$ is dense in $L^p(\CM,\tau)$ for all $1<p\leq \infty,$ by Theorem \ref{Banach_Principle} it suffices to prove the b.a.u. convergence of ${A}_n^{\pi,\phi,u}$ for $x \in L^2(\CM, \tau).$ Using the JdLG decomposition (Theorem \ref{JdLG}), we will prove this convergence separately for $x \in L^2(\CM,\tau)_c$ and $x \in \CM_{wm}$, which will complete the proof of Theorem \ref{WW_theorem}. Since $\CU_d$ is a compact group, the Stone-Weierstrass theorem allows us to approximate $\phi$ uniformly by matrix coefficients, that is the functions of the form $u\in \CU_d \mapsto \langle u\xi,\eta\rangle,~~\xi,\eta \in \C^d.$ Therefore, to prove convergence, it suffices to assume that $\phi$ is of this form. 
\end{rem}
The following theorem proves the convergence of the weighted average ${A}_n^{\pi,\phi,u}(x),$ when $x$ is from the compact part. 
\begin{thm}
   Let $x\in L^2(\CM,\tau)_c$ and $\xi, \eta \in \C.$ Then the following weighted average 
   $$B_n(x)= \int_G\alpha_g(x)\langle \pi(g)u\xi,\eta \rangle s_n(g)dm(g)$$ converges in a.u.
\end{thm}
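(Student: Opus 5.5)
We prove the statement first for a single eigenvector and then extend to the whole compact part by linearity, density and the Banach principle.

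\textbf{Reduction to eigenvectors.} For a unitary representation of $G$ on the Hilbert space $L^2(\CM,\tau)$ (via $\alpha$), the compact part $L^2(\CM,\tau)_c$ is the closed linear span of vectors $y$ generating finite-dimensional invariant subspaces. On such a subspace, $\alpha_g(y_i)=\sum_k \sigma(g)_{ki}\, y_k$ for a finite-dimensional unitary representation $\sigma$ of $G$ and an orthonormal basis $(y_k)$. Since $B_n$ is linear in $x$, it suffices to treat:
\begin{itemize}
\item elements $y$ lying in a finite-dimensional invariant subspace (these are dense in $L^2(\CM,\tau)_c$);
\item a fixed matrix coefficient $\langle \pi(g)u\xi,\eta\rangle$, since we may take $\xi,\eta\in\C^d$.
\end{itemize}
By ergodic-theoretic standard facts (as in \cite{BGK_WW_2026}), such $y$ can moreover be taken in $\CM$, since the finite-dimensional subspaces in the compact part of a finite trace-preserving action consist of bounded elements after approximation. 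For such $y$,
\[
B_n(y)=\sum_k y_k \int_G \sigma(g)_{ki}\,\langle \pi(g)u\xi,\eta\rangle\, s_n(g)\,dm(g).
\]
So $B_n(y)$ is a finite sum of fixed operators $y_k$ multiplied by scalar averages of the matrix coefficient $c(g)=\sigma(g)_{ki}\langle\pi(g)u\xi,\eta\rangle$ of the finite-dimensional unitary representation $\sigma\otimes\pi$.

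\textbf{Convergence of the scalar averages.} We show $\int_G c\, s_n\,dm$ converges by a mean-ergodic argument for Reiter sequences. Let $V=\C^{d'}\otimes\C^d$ and $\theta=\sigma\otimes\pi$, so that $c(g)=\langle \theta(g)v,w\rangle$ for suitable vectors $v,w$. Consider $T_n=\int_G \theta(g)s_n(g)\,dm(g)$. For $t\in G$,
\[
\|T_n\theta(t)-T_n\|\le\|\rho_{t^{-1}}(s_n)-s_n\|_1\to0 .
\]
Any limit point $T$ of the bounded sequence $(T_n)$ in the finite-dimensional space therefore satisfies $T\theta(t)=T$ for all $t$. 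Splitting $V=V^G\oplus (V^G)^\perp$:
\begin{itemize}
\item $T_n$ acts as the identity on $V^G$, because $\int s_n=1$;
\item $(V^G)^\perp$ is spanned by vectors $v-\theta(t)v$, on which $T_n\to0$ by the estimate above.
\end{itemize}
Hence $T_n\to P_{V^G}$, and the limit is independent of the subsequence. Since all operators involved are bounded, $B_n(y)$ converges in operator norm, hence a.u.

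\textbf{Passing to the closure.} To extend to all $x\in L^2(\CM,\tau)_c$ we use the maximal inequality. The weighted averages are of weak type $(1,1)$ by Theorem \ref{weak_1,1}, hence strong $(2,2)$ by interpolation. Combined with a column-type estimate (Kadison's inequality, as in the proof of Theorem \ref{thm: bau and su convergence}(ii)), the set of $x$ for which $B_n(x)$ converges a.u. is closed in $L^2$, by the Banach principle argument of Theorem \ref{Banach_Principle}. Density of the span of eigen-elements then finishes the proof.

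The main obstacle is this last step: the a.u. (rather than b.a.u.) closedness at $p=2$, which is exactly the borderline case of Theorem \ref{thm: bau and su convergence}(ii). I would first try to obtain it directly from the strong $(4,4)$ bound together with $\|\cdot\|_\infty$ control on the bounded eigen-elements, or otherwise settle for b.a.u., which is all that Theorem \ref{WW_theorem} needs.
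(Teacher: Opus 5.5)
Your argument is the paper's in substance. The paper also reduces to a finite-dimensional invariant subspace with orthonormal basis $x_1,\dots,x_m$ and writes $B_n(x)=\sum_i c_{n,i}x_i$, where each scalar $c_{n,i}$ is an average against $s_n$ of a matrix coefficient of $\alpha\otimes\pi$. It then invokes the mean ergodic theorem for Reiter sequences, which you prove directly in finite dimensions via $\|T_n\theta(t)-T_n\|\le\|\rho_{t^{-1}}(s_n)-s_n\|_1$. Your claim that the invariant subspace can be taken inside $\CM$ is asserted without proof, and you do not need it. As in the paper, choose one projection $e$ with $\tau(1-e)<\eps$ and $x_ie\in\CM$ for the finitely many $x_i$; then $\|(B_n(x)-B_m(x))e\|_\infty\le\sum_i|c_{n,i}-c_{m,i}|\,\|x_ie\|_\infty\to0$. (The claim is true, though: $\sum_k y_k^*y_k$ is $\alpha$-invariant, so its spectral projections $e_M$ are invariant and the $y_ke_M$ span a bounded invariant subspace.)

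The real gap is the step you flag, namely the a.u.\ passage from the dense set to all of $L^2(\CM,\tau)_c$, and neither of your proposed exits closes it. Elements of $L^2(\CM,\tau)_c$ need not lie in $L^4$, and closedness in $\|\cdot\|_2$ requires a maximal estimate controlled by $\|x-x_k\|_2$, so strong $(4,4)$ bounds cannot help. Settling for b.a.u.\ proves less than the statement. (The paper's proof passes over the same point: Theorem \ref{Banach_Principle}, which it cites, is a b.a.u.\ principle.)

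The missing idea is that at $p=2$ you need only a \emph{weak}-type column estimate, not the strong one behind Theorem \ref{thm: bau and su convergence}(ii), and it follows from weak $(1,1)$ applied to $x^*x$. Put $w(g)=\langle\pi(g)u\xi,\eta\rangle$, normalized so $|w|\le1$, and $F(g)=w(g)\alpha_g(x)$. The operator Cauchy--Schwarz inequality $(\int F\,d\nu)^*(\int F\,d\nu)\le\int F^*F\,d\nu$ for the probability measure $\nu=s_n\,dm$ gives $B_n(x)^*B_n(x)\le\int_G|w(g)|^2\alpha_g(x^*x)s_n(g)\,dm(g)\le A_n^s(x^*x)$. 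This holds first for $x\in\CM$, then for $x\in L^2$ by density. Now apply Theorem \ref{weak_type_(1,1)_ergodic} to $x^*x$, using $\|x^*x\|_1=\|x\|_2^2$. For each $\lambda>0$ this gives a projection $e$ with $\tau(1-e)\le C\|x\|_2^2/\lambda$ and $\sup_n\|B_n(x)e\|_\infty\le\sqrt{\lambda}$. Then run the usual Banach-principle argument:
\begin{enumerate}
\item take $x_k$ in the span of finite-dimensional invariant subspaces with $\|x-x_k\|_2$ small enough that the estimate for $x-x_k$ with $\lambda_k=2^{-k}$ costs co-trace at most $\eps 2^{-k}$;
\item intersect those projections with the ones giving a.u.\ convergence of $B_n(x_k)$;
\item on the resulting $e$, the sequence $B_n(x)e$ is Cauchy in $\CM$, and the $L^2$-convergence of $B_n(x)$ (mean ergodic theorem for $\alpha\otimes\pi$) identifies the limit.
\end{enumerate}
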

\begin{proof}
In  view of \cite[Theorem 15.14]{Eisner2015} and Theorem \ref{Banach_Principle}, it suffices to prove the convergence of $B_n(x)$ for $x$ from finite dimensional $G-$ invariant subspace of $L^2(\CM, \tau).$ Let $\mathcal{K}$ be a finite dimensional invariant subspace of $L^2(\CM, \tau)$ with an orthonormal basis $\{x_1,\cdots, x_m\}.$ Then we have 
$$\alpha_g(x)=\sum_{i=1}^m \langle\alpha_g(x),x_i\rangle x_i$$
Now 
\begin{align*}
    B_n(x)&=\sum_{i=1}^m\int_G\langle\alpha_g(x),x_i
\rangle \langle\pi(g)u\xi,\eta \rangle s_n(g)dm(g) x_i\\
&=\sum_{i=1}^m\int_G\bigg\langle\alpha_g(x)\otimes \pi(g)u\xi,x_i\otimes \eta\bigg\rangle  s_n(g)dm(g) x_i\\
&=\sum_{i=1}^m\bigg\langle\int_G\alpha_g(x)\otimes \pi(g)u\xi ~s_n(g) dm(g), x_i\otimes \eta \bigg\rangle  x_i.
\end{align*}
Applying the Mean Ergodic Theorem for Reiter sequence yields
$$\int_G\alpha_g(x)\otimes \pi(g)u\xi ~s_n(g) dm(g)= \int_G(\alpha_g\otimes \pi(g))(x\otimes u\xi)s_n(g)dm(g)$$
converges. Therefore, since there are finitely many $x_i$ we can find a single projection $e \in \CM$ with arbitrarily small trace of its complement such that $\mathbb{B}_n(x)e$ converges.  
\end{proof}
Next, we present a theorem on weak-mixing criteria along a Reiter sequence from \cite[Theorem 3.13]{BGK_WW_2026}. 
\begin{thm}
    Let $G$ be an amenable locally compact group, $\pi \colon G \to \mathcal{U}(\mathcal{H})$ a strongly continuous unitary representation, and $(\mu_n)$ a left Reiter sequence on $G$. If $\xi \in \mathcal{H}_{\mathrm{wm}}$, then
\begin{align*}
    \int_G |\langle \pi(g)\xi, \eta \rangle| \, d\mu_n(g) \longrightarrow 0 \quad \text{for all } \eta \in \mathcal{H}.
\end{align*}
\end{thm}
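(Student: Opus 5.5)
To prove the statement I will reduce it to the F\o lner-type characterization of weak mixing in Theorem \ref{JdlG_decomposition}, using the layer-cake decomposition exactly as in the proof of Theorem \ref{main-thm}. Fix $\xi\in\CH_{wm}$ and $\eta\in\CH$, and put $\psi(g):=\abs{\langle\pi(g)\xi,\eta\rangle}$. This is a bounded continuous function with $0\le\psi\le\norm{\xi}\norm{\eta}$. Since $\mu_n$ is a left Reiter sequence (identified with densities $h_n\in\CP^1(G)$ with respect to left Haar measure), the claim reads $\int_G\psi h_n\,dm\to0$.

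The first step is a reduction to simple densities. By Lemma \ref{finite-sum-aprox} (left version), choose simple $s_n$ with $\norm{h_n-s_n}_1\to0$. Because $\psi$ is bounded, $\abs{\int\psi(h_n-s_n)}\le\norm{\psi}_\infty\norm{h_n-s_n}_1\to0$, so it suffices to treat $(s_n)$.

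Next comes the layer-cake step. For the level sets $F_{n,t}=\{s_n>t\}$ write
\[
\int_G\psi s_n\,dm=\int_0^\infty\Big(\frac{1}{m(F_{n,t})}\int_{F_{n,t}}\psi\,dm\Big)d\mu_n(t).
\]
Fix a compact $K$ and $\delta>0$, and split levels into good and bad as in the proof of Theorem \ref{main-thm}: $\Phi_n(t)=\int_{UK}m(gF_{n,t}\Delta F_{n,t})/m(F_{n,t})\,dm(g)$. Then $\int\Phi_n\,d\mu_n=\int_{UK}\norm{\lambda_g s_n-s_n}_1\to0$, and Chebyshev gives $\mu_n(\{\Phi_n>\delta\})\to0$. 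The bad levels contribute at most $\norm{\psi}_\infty\mu_n(B_n)\to0$. On good levels, the left analogue of Proposition \ref{prop: int over good set} gives $\sup_{g\in K}m(gF\Delta F)/m(F)\lesssim\delta$.

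It then remains to show that, uniformly over all sets $F$ that are $(K,\delta)$-invariant, the averages $\frac1{m(F)}\int_F\psi$ become small as $K\uparrow G$ and $\delta\downarrow0$. I expect this to be the main obstacle, because Theorem \ref{JdlG_decomposition} gives convergence only along a single F\o lner net, and here we need uniformity over the whole family of good level sets. The plan is to argue by contradiction. Suppose the estimate fails. Then, taking $K=K_n$ and $\delta=1/n$, we obtain sets $F_n$ (each a good level set for some index) with $\frac1{m(F_n)}\int_{F_n}\psi\ge c>0$. Such $(F_n)$ form a left F\o lner sequence, and since the theorem quoted applies to any F\o lner net, weak mixing forces these averages to tend to $0$, which is a contradiction.

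With this uniform smallness in hand, the good-level contribution is at most $\sup_{t\in B_n^c}\frac1{m(F_{n,t})}\int_{F_{n,t}}\psi\le\varepsilon$ for large $n$ (passing along indices with $K_n$ increasing and $\delta_n\to0$). Combining this with the bad-level and simple-approximation errors gives $\limsup_n\int\psi\,d\mu_n\le\varepsilon$ for every $\varepsilon$, which proves the claim.
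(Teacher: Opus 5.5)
The paper gives no proof of this statement; it imports it from \cite[Theorem 3.13]{BGK_WW_2026}, so there is no in-paper argument to compare with. Judged on its own, your argument is correct. Its real content is the contradiction step, and that step is legitimate because Theorem~\ref{JdlG_decomposition} is stated for an arbitrary left F\o lner net. Four points should be tightened.

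(1) For left translations and a left Haar measure, the thickening must be $KU$ with $e\in K$, not $UK$. For $x\in K$, integrate $m(xF\Delta F)\le m(xF\Delta gF)+m(gF\Delta F)$ over $g\in xU$ and use $m(xF\Delta xuF)=m(F\Delta uF)$. Over $UK$ the substitution produces conjugates $x^{-1}ux$ and modular factors.

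(2) For the sets $F_n$ in the contradiction to form a F\o lner sequence, every compact set must lie in some $K_n$; take $K_n\subseteq\mathrm{int}\,K_{n+1}$. Alternatively, index by pairs $(K,\delta)$ and obtain a F\o lner net, which also removes any need for $\sigma$-compactness. If the cited theorem only admits compact F\o lner sets, shrink each $F_n$ to a compact subset by inner regularity.

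(3) State why $\CH_{wm}$ does not depend on the net. For a unitary representation the JdLG projection onto $\CH_c$ is a contractive idempotent, hence orthogonal, so $\CH_{wm}=\CH_c^{\perp}$ for every F\o lner net.

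(4) No passage to subindices is needed at the end. Given $\eps$, fix the pair $(K,\delta)$ from your uniform lemma and run the good/bad split with this fixed pair for all $n$. This gives $\limsup_n\int\psi\,d\mu_n\le\eps$ along the whole sequence.

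For comparison, the standard direct argument needs no F\o lner sets. By Cauchy--Schwarz, $\int\psi\,d\mu_n\le(\int\psi^2\,d\mu_n)^{1/2}$, and $\psi(g)^2=\langle\Pi(g)(\xi\otimes\bar\xi),\eta\otimes\bar\eta\rangle$ with $\Pi=\pi\otimes\bar\pi$. Put $A_n=\int\Pi(g)\,d\mu_n(g)$. Since $\norm{\Pi(h)A_n-A_n}\le\norm{\lambda_h\mu_n-\mu_n}_1$, every weak-operator cluster point of $(A_n)$ is the projection onto the $\Pi$-invariant vectors. These are Hilbert--Schmidt intertwiners of $\pi$, which vanish on $\CH_c^{\perp}=\CH_{wm}$. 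Hence $\xi\otimes\bar\xi$ is orthogonal to them and the limit is $0$. Equivalently, $\psi\in\mathrm{WAP}(G)$, which carries a unique invariant mean, and every weak$^*$ cluster point of $(\mu_n)$ is invariant.

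That route is shorter, needs neither simple-function approximation nor thickening, and gives the stronger quadratic statement. Yours instead proves a transfer principle in the spirit of Section~3: for any bounded $\psi$, if its F\o lner averages vanish along every left F\o lner net, then its averages vanish along every left Reiter sequence. Your compactness step turns ``along every F\o lner net'' into ``uniformly over $(K,\delta)$-invariant sets'', and the layer-cake split does the rest. The price is that everything rests on the net-independence of the F\o lner characterization of $\CH_{wm}$.
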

The following lemma is needed to prove the convergence of the weak mixing part. Since the techniques of the proof is similar to the proof of \cite[Lemma 4.13]{BGK_WW_2026}, we omit the details. 
\begin{lem}\label{keylemma}
    Let $G$ be a l.c.s.c group with two-sided Reiter sequence $(s_n)_n$ and $(\CM, \tau)$ be a finite von Neumann algebra. $\alpha: G \to Aut (\CM)$ be a trace preserving weakly mixing action. Suppose $x \in \CM$ with $\tau(x)=0$, Then we have 
    \begin{align*}
        \int_G\int_G |\tau(x^*\alpha_{h_1^{-1}h_2}(x))|s_k(h_1)s_k(h_2)dm(h_1)dm(h_2) \longrightarrow 0.
    \end{align*}
\end{lem}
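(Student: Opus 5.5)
The plan is to reduce the statement to the weak-mixing criterion along Reiter sequences (the theorem quoted just above from \cite[Theorem 3.13]{BGK_WW_2026}), applied to a suitable tensor-product representation. First I would rewrite the integrand via the Koopman representation. Set $\xi = x \in L^2(\CM,\tau)$, with $\alpha$ extended unitarily to $L^2(\CM,\tau)$ since it is $\tau$-preserving. Then
\begin{align*}
\tau(x^*\alpha_{h_1^{-1}h_2}(x)) = \langle \alpha_{h_1^{-1}h_2}(x), x\rangle = \langle \alpha_{h_2}(x), \alpha_{h_1}(x)\rangle .
\end{align*}
Consider $\xi\otimes\bar\xi$ in $L^2(\CM,\tau)\otimes\overline{L^2(\CM,\tau)}$ with the representation $\alpha\otimes\bar\alpha$. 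Weak mixing of the action together with $\tau(x)=0$ should mean that $x$ lies in $L^2(\CM,\tau)_{wm}$, because the fixed space is $\C 1$ and there is no nontrivial finite-dimensional subrepresentation on $1^\perp$. A standard fact is that $x\in \CH_{wm}$ implies $x\otimes\bar x \in (\CH\otimes\bar\CH)_{wm}$: the weakly mixing part is exactly the part with no finite-dimensional subrepresentations, and the tensor product of a weakly mixing representation with any unitary representation is weakly mixing.

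Next I would bound the double integral. By Cauchy--Schwarz in $L^2(s_k\otimes s_k)$,
\begin{align*}
\Big(\int_G\int_G |\langle\alpha_{h_2}x,\alpha_{h_1}x\rangle| s_k(h_1)s_k(h_2)\,dm\,dm\Big)^2 \le \int_G\int_G |\langle\alpha_{h_2}x,\alpha_{h_1}x\rangle|^2 s_k(h_1)s_k(h_2)\,dm\,dm .
\end{align*}
Then I expand $|\langle\alpha_{h_2}x,\alpha_{h_1}x\rangle|^2=\langle (\alpha\otimes\bar\alpha)_{h_2}(x\otimes\bar x),(\alpha\otimes\bar\alpha)_{h_1}(x\otimes\bar x)\rangle$. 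Writing $v_k=\int_G (\alpha\otimes\bar\alpha)_h(x\otimes \bar x)s_k(h)\,dm(h)$, the right side equals $\|v_k\|^2$.

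It remains to show $v_k\to 0$, which is the mean ergodic theorem for the Reiter sequence. $v_k$ converges in norm to the projection of $x\otimes\bar x$ onto the $G$-invariant vectors. That projection vanishes because $x\otimes\bar x$ lies in the weak-mixing part, which is orthogonal to invariant vectors: invariant vectors are compact. Alternatively, I can avoid Cauchy--Schwarz and apply the quoted criterion directly, pairing $x\otimes\bar x$ against $\eta$ and averaging.

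The main obstacle is the side issue: the convergence must use $s_k$ in the correct variable. The mean ergodic theorem needs the Reiter invariance that matches the action convention. With $h_1^{-1}h_2$ and the representation $h\mapsto\alpha_h$ acting on the left, the averages $\int \alpha_h(\cdot)s_k(h)$ require a left Reiter condition, while the cross term $\langle\alpha_{h_2}x,\alpha_{h_1}x\rangle$ is symmetric. This is exactly where the two-sided hypothesis and the unimodularity proved earlier in this section are used. The proof of the mean ergodic step is the usual one: for $y=z-\pi(g)z$, Reiter invariance gives $\|\int\pi(h)y\,s_k\|\le\|z\|\,\|s_k-\lambda_g s_k\|_1\to0$, and density of such $y$ in the orthocomplement of the invariants finishes it.
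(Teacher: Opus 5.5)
Your argument is correct, but it follows a different route from the one the paper points to. The paper omits the proof and defers to \cite[Lemma 4.13]{BGK_WW_2026}. Its placement right after the quoted weak-mixing criterion along Reiter sequences suggests the intended argument, which goes as follows. Substitute $h_2=h_1g$ and use unimodularity; the double integral then equals $\int_G|\langle\alpha_g(x),x\rangle|\,(\check s_k*s_k)(g)\,dm(g)$, where $\check s_k(h)=s_k(h^{-1})$. The densities $\check s_k*s_k$ again form a Reiter sequence, since for instance $\|\lambda_a(\check s_k*s_k)-\check s_k*s_k\|_1\le\|\rho_a s_k-s_k\|_1$. Applying the criterion with $\xi=\eta=x\in L^2(\CM,\tau)_{wm}$ then gives the claim directly, without squaring. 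Your closing ``alternatively'' sentence only becomes a proof after exactly this rewriting, because the criterion concerns single averages against a Reiter density.

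Your route replaces that $L^1$-type criterion with three ingredients: Cauchy--Schwarz, the $\pi\otimes\bar\pi$ characterization of $\CH_{wm}$, and the Hilbert-space mean ergodic theorem along $(s_k)$. What it buys is that it is self-contained and needs no convolution step. It also yields the explicit bound $\iint|\tau(x^*\alpha_{h_1^{-1}h_2}(x))|\,s_k(h_1)s_k(h_2)\,dm\,dm\le\|v_k\|$. The paper's route is shorter given the criterion from \cite{BGK_WW_2026} and mirrors the F\o lner case, but it requires the extra check that $\check s_k*s_k$ is Reiter. Both arguments only use $x\in L^2(\CM,\tau)_{wm}$, which is all that the final theorem needs.

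One slip to fix: for $y=z-\pi(g)z$ you get $\int_G\pi(h)y\,s_k(h)\,dm(h)=\int_G\pi(h)z\,(s_k-\rho_{g^{-1}}s_k)(h)\,dm(h)$. So the averages $\int_G\alpha_h(\cdot)\,s_k(h)\,dm(h)$ use the \emph{right} Reiter condition, as in the proof of Theorem~\ref{thm: bau and su convergence}, not $\lambda_g$. Since $(s_n)$ is two-sided and $G$ is unimodular, this does not affect your argument.
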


\begin{thm}
    Let $G$ be a l.c.s.c group with two-sided Reiter sequence $(s_n)$ and $\CM$ be a finite von Neumann algebra with f.n trace $\tau.$ Suppose $(\CM,G, \alpha)$ is as before. Let $x\in \CM_{wm}.$ Then ${A}_n^{\pi,\phi,u}(x)$ converges to $0$ in a.u. uniformly in symbols $(\pi,\phi,u).$
\end{thm}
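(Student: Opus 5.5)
The plan is to apply the van der Corput inequality of Lemma \ref{vand_corput} with a well-chosen $f$, and to get the projection from the weak mixing estimate of Lemma \ref{keylemma} together with the maximal inequality of Theorem \ref{weak_1,1} and Proposition \ref{ergodic_L2}.

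\textbf{Reduction and choice of $f$.} By Remark \ref{simplification} we may take $\phi(v)=\langle v\xi,\eta\rangle$ with $\|\xi\|,\|\eta\|\le 1$, and $x\in\CM_{wm}$; then $\tau(x)=0$, since the constants lie in the compact part. Fix an auxiliary index $k$ and set
\[
f(g)=\alpha_g(x)\,\phi(\pi(g)u),
\]
so that $\int_G f s_n\,dm=A_n^{\pi,\phi,u}(x)$ and $\|f\|_\infty\le\|x\|_\infty$. In \eqref{vand_cor_inequality} the two error terms involve $\int_G\|s_n-\rho_{h^{-1}}(s_n)\|_1 s_k(h)\,dm(h)$. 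For fixed $k$ this tends to $0$ as $n\to\infty$ by dominated convergence, since $s_k$ is a probability density and the integrand is bounded by $2$. These terms are also independent of $(\pi,\phi,u)$.

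\textbf{The main term.} Expand $f(g)^*f(gh)$ with $h=h_1^{-1}h_2$:
\[
\alpha_g\bigl(x^*\alpha_h(x)\bigr)\,\overline{\phi(\pi(g)u)}\,\phi(\pi(g)\pi(h)u).
\]
The scalar factor is a matrix coefficient of $\pi\otimes\bar\pi$ in $g$ and has modulus at most $1$. The idea is to write $y_h=x^*\alpha_h(x)$, with $\tau(y_h)=\tau(x^*\alpha_h(x))$, and to split $y_h=(y_h-\tau(y_h)1)+\tau(y_h)1$.

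The constant piece contributes, after integrating against $s_k(h_1)s_k(h_2)$, at most
\[
\int_G\int_G\abs{\tau(x^*\alpha_{h_1^{-1}h_2}(x))}\,s_k(h_1)s_k(h_2)\,dm(h_1)\,dm(h_2),
\]
uniformly in the symbols. This is small for $k$ large by Lemma \ref{keylemma}.

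The piece $y_h-\tau(y_h)1$ has mean zero. One would like to say that its weighted average over $g$ tends to $0$ in a.u., uniformly in the symbols, via Proposition \ref{ergodic_L2} (here assuming ergodicity, as in \cite{BGK_WW_2026}). The weight $\overline{\phi(\pi(g)u)}\phi(\pi(g)\pi(h)u)$ is a problem. The plan is to bound the operator in norm only after cutting by a projection: since the expression is not positive, write it via $\abs{\cdot}$ and Kadison-type domination $\|e\int_G\alpha_g(z)w(g)s_n\,dm\,e\|\le\ldots$. More concretely, imitate \cite[Theorem 4.14]{BGK_WW_2026}. Note that Lemma \ref{vand_corput} is stated in operator norm, so it must be applied to $e\,f(g)\,e$-type quantities, or rather to $f$ replaced by $f(g)e$. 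Then the main term becomes $\|e\,A_n^{h}(y_h)\,e\|$ up to the weight, and one uses the weak type maximal inequality Theorem \ref{weak_1,1} for the weighted averages to pick one projection $e$ working for all $h$ in a countable dense set and all symbols.

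\textbf{Conclusion.} Given $\epsilon>0$, first choose $k$ so that the Lemma \ref{keylemma} term is below $\epsilon$. Then, by Proposition \ref{ergodic_L2} applied to the countably many elements $x^*\alpha_h(x)$ with $h$ in a dense subset of $G$, together with continuity in $h$ and the Banach principle (Theorem \ref{Banach_Principle}), choose a projection $e$ with $\tau(1-e)<\epsilon$. Finally take $n$ large, so that $\limsup_n\|A_n^{\pi,\phi,u}(x)e\|^2\lesssim\epsilon$ uniformly in $(\pi,\phi,u)$.

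\textbf{Main obstacle.} The hardest step is making the projection uniform in $h_1,h_2$ and in the symbols simultaneously. The uniformity in $h$ requires exchanging the $h$-integral with the a.u. limit. I would try to do this by truncating $s_k$ to a compact set, which costs an $L^1$-small error, and using norm continuity of $h\mapsto\alpha_h(x)$ in $L^2$ on that compact set.
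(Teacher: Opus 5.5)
Your proposal has a genuine gap in the main term, and it comes from the choice $f(g)=\alpha_g(x)\,\phi(\pi(g)u)$.

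With a scalar weight, $f(g)^*f(gh)$ carries the factor $\overline{\phi(\pi(g)u)}\,\phi(\pi(g)\pi(h)u)$. For $d>1$ this still depends on $g$, since it is a matrix coefficient of $\bar\pi\otimes\pi$ at $g$. So the inner $g$-integral in Lemma \ref{vand_corput} is not $A_n^s(y_h)$. It is a Wiener--Wintner-type weighted average of $y_h=x^*\alpha_h(x)$, and Proposition \ref{ergodic_L2} does not apply to it. You then need the weighted average of $y_h-\tau(y_h)1$ to tend to $0$ uniformly in the symbols, and this is false in general for individual $h$. The reason is that $y_h-\tau(y_h)1$ can have a nonzero compact component even though $x$ is weakly mixing. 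For example, take $x=\varphi\otimes(\psi_1+\psi_2)$ on the product of a weakly mixing system with an ergodic rotation, with $\varphi$ of mean zero and $\psi_i$ eigenfunctions. Then $x^*x$ has a nonzero component along the eigenfunction $\bar\psi_1\psi_2$. On such components the weighted averages converge to nonzero limits for suitable $(\pi,\xi,\eta)$.

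None of the tools you invoke repairs this:
\begin{itemize}
\item Theorem \ref{weak_1,1} is a maximal inequality, so it only gives boundedness, not convergence to $0$.
\item Theorem \ref{Banach_Principle} only propagates convergence that is already established on a dense set.
\item A Kadison--Schwarz domination bounds the term by something like $\int_G\alpha_g(|z|^2)s_n\,dm$, whose limit $\tau(|z|^2)$ is not small.
\end{itemize}

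The missing idea is to remove the $g$-dependence of the weight by tensoring with the unitary itself. Take $f(g)=\alpha_g(x)e\otimes\pi(g)u\in\CM\otimes M_d(\C)$. Since $\pi(g)^*\pi(g)=1$,
\[
f(g)^*f(gh)=e\,\alpha_g\bigl(x^*\alpha_h(x)\bigr)\,e\otimes u^*\pi(h)u .
\]
So the inner integral is $eA_n^s(x^*\alpha_h(x))e\otimes u^*\pi(h)u$. Its norm is $\|eA_n^s(x^*\alpha_h(x))e\|$, which tends to $|\tau(x^*\alpha_h(x))|$ by the choice of $e$ via Proposition \ref{ergodic_L2}. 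No mean-zero splitting is needed. The resulting bound $\int_G\int_G|\tau(x^*\alpha_{h_1^{-1}h_2}(x))|\,s_k(h_1)s_k(h_2)\,dm(h_1)\,dm(h_2)$ does not involve $(\pi,u)$ and tends to $0$ by Lemma \ref{keylemma}. You then recover $A_n^{\pi,\phi,u}(x)e=(\mathrm{id}\otimes\omega_{\xi,\eta})\bigl(\int_G f(g)s_n(g)\,dm(g)\bigr)$, so $\phi$ enters only through $\|\xi\|\|\eta\|$.

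Your scalar choice works only when $d=1$, where $\overline{\pi(g)}\pi(gh)=\pi(h)$. Your concern about making $e$ uniform in $h$ is legitimate; the paper passes from the dense set $\{g_i\}$ to all of $G$ with only a brief continuity remark. That issue is secondary to the one above.
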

\begin{proof}
    In view of Remark \ref{simplification}, we can take the weighted average as
$${A}_n^{\pi,\phi,u}(x)= \int_G\alpha_g(x)\langle \pi(g)u\xi,\eta \rangle s_n(g)dm(g), ~~\xi,\eta \in \C^d.$$
Since $G$ is second countable, there exists a countable dense subset $\{g_i\}_{i \in \N}$ of $G.$
Let $\varepsilon >0.$ Applying Proposition \ref{ergodic_L2} there exist projections $e_i \in \CM$ with $\tau(1-e_i) <\frac{\varepsilon}{2^i}$ and 
$$A^s_n(x^*\alpha_{g_i}(x))e_i \to \tau(x^*\alpha_{g_i}(x))e_i \text{ in } \|\cdot\|_{\infty}.$$ Define $e:=\displaystyle \bigwedge_{i\in \N} e_i.$ Therefore $$A^h_n(x^*\alpha_{g_i}(x))e \to \tau(x^*\alpha_{g_i}(x))e \text{ in } \|\cdot\|_{\infty} \text{ and } \tau(1-e) < \varepsilon.$$
Since the action $\alpha$ is continuous and $\{g_i\}_{i\in \N}$ is dense in $G$, it follows that 
\begin{align*}
    A^h_n(x^*\alpha_{g}(x))e \to \tau(x^*\alpha_{g}(x))e \text{ in } \|\cdot\|_{\infty} \text{ for all } g \in G.
\end{align*}
Consider the function $f:G \to \CM \otimes M_d(\C),$ defined by $f(g):= \alpha_g(x)e\otimes \pi(g)u,~~u\in \CU_d.$ So, $f$ is bounded under $\|\cdot\|_\infty$ of $\CM \otimes M_d(\C).$ Next we will use the Lemma \ref{vand_corput}. Since $(s_n)_n$ is a two-sided Reiter sequence and $s_k\in L^1(G)$, by the dominated convergence theorem the last two terms on the R.H.S of Ineq. \ref{vand_cor_inequality} converge to $0$. Hence, it suffices to prove that the first term on the R.H.S of Ineq. \ref{vand_cor_inequality} converges to $0$. We have
$$f(g)^*f(gh_1^{-1}h_2)=e\alpha_g(x^*\alpha_{h_1^{-1}h_2}(x))e\otimes u^*\pi(h_1^{-1}h_2)u.$$
Therefore, computing the first term of the R.H.S of Ineq. \ref{vand_cor_inequality} we obtain
\begin{align*}
\bigg\|&\int_G\int_G\bigg(\int_Gf(g)^*f(gh_1^{-1}h_2)s_n(g)dm(g)\bigg)s_k(h_1)s_k(h_2)dm(h_1)dm(h_2)\bigg\|\\
&=\bigg\|\int_G\int_G\bigg(\int_Ge\alpha_g(x^*\alpha_{h_1^{-1}h_2}(x))e\otimes u^*\pi(h_1^{-1}h_2)u~s_n(g)dm(g)\bigg)s_k(h_1)s_k(h_2)dm(h_1)dm(h_2)\bigg\|\\
&=\int_G\int_G\bigg\|\int_Ge\alpha_g(x^*\alpha_{h_1^{-1}h_2}(x))e\otimes u^*\pi(h_1^{-1}h_2)u~s_n(g)dm(g)\bigg\|s_k(h_1)s_k(h_2)dm(h_1)dm(h_2)\\
&\leq \int_G\int_G\bigg\|{A}^s_n(x^*\alpha_{h_1^{-1}h_2}(x))\bigg\|s_k(h_1)s_k(h_2)dm(h_1)dm(h_2)\\
&= \int_G\int_G\big|\tau(x^*\alpha_{h_1^{-1}h_2}(x))\big|s_k(h_1)s_k(h_2)dm(h_1)dm(h_2).
\end{align*}
Hence, using Lemma \ref{keylemma} we obtain
$$ \int_G\big(\alpha_g(x)e \otimes \pi(g)u\big) ~s_n(g)dm(g) \longrightarrow 0$$ in $\|\cdot\|_\infty$ of the von Neumann algebra $\CM\otimes M_d(\C).$ Therefore, 
\begin{align*}
    {A}_n^{\pi,\phi,u}(x)e&= \int_G\alpha_g(x)e\langle \pi(g)u\xi,\eta \rangle s_n(g)dm(g)\\
    &=\int_Gid_{\CM}\otimes\omega_{\xi,\eta}(\alpha_g(x)e\otimes \pi(g)u)~s_n(g)dm(g)\\
    &=id_{\CM}\otimes\omega_{\xi,\eta} \bigg(\int_G\big(\alpha_g(x)e \otimes \pi(g)u\big) ~s_n(g)dm(g)\bigg),
\end{align*}
where $\omega_{\xi,\eta}:\CU_d \to \C$ defined as $\omega_{\xi,\eta}(u):= \langle u\xi,\eta\rangle.$ Since $id_{\CM}\otimes \omega_{\xi,\eta}$ is a bounded operator, we conclude that ${A}_n^{\pi,\phi,u}(x)e \longrightarrow 0$ in $\|\cdot\|_{\infty}$ of $\CM.$
\end{proof}

\medskip
\noindent\textbf{Acknowledgments.} 
First author P. Bikram gratefully acknowledges financial support provided by the Anusandhan National Research Foundation (ANRF), Government of India, under the : ANRF/ARGM/2025/001021/MTR  grant.



    \newcommand{\etalchar}[1]{$^{#1}$}
\providecommand{\bysame}{\leavevmode\hbox to3em{\hrulefill}\thinspace}
\providecommand{\MR}{\relax\ifhmode\unskip\space\fi MR }
\providecommand{\MRhref}[2]{%
  \href{http://www.ams.org/mathscinet-getitem?mr=#1}{#2}
}
\providecommand{\href}[2]{#2}

\end{document}